\documentclass{article}

\PassOptionsToPackage{numbers, compress}{natbib}



\usepackage[final]{neurips_2020}

\bibliographystyle{unsrtnat}

\usepackage[utf8]{inputenc} 
\usepackage[T1]{fontenc}    
\usepackage[hidelinks]{hyperref}       
\usepackage{booktabs}       
\usepackage{amsfonts}       
\usepackage{nicefrac}       
\usepackage{microtype}      

\usepackage[capbesideposition=outside,capbesidesep=quad]{floatrow}
\usepackage{amssymb}
\usepackage{amsthm}
\usepackage{enumerate}

\newcommand{\R}{{\bf R}}





\newcommand{\argmin}{\mathop{\rm argmin}}

\newcommand{\argmax}{\mathop{\rm argmax}}


\newcommand{\ind}[1]{_{#1}}

\usepackage{physics}

\newtheorem{theorem}{Theorem}
\newtheorem*{theorem*}{Theorem}
\newtheorem{lemma}{Lemma}
\newtheorem{definition}{Definition}

\newtheorem{remark}{Remark}
\newtheorem{fact}{Fact}

\newtheorem{corollary}{Corollary}
\newtheorem{assumption}{Assumption}

\newcommand{\minimize}[2]{\underset{#1}{\text{minimize}} #2}
\newcommand{\maximize}[2]{\underset{#1}{\text{maximize}} #2}

\newcommand{\lag}{\mathcal{L}}
\newcommand{\sm}{L}

\newcommand{\maximum}[2]{\underset{#1}{\text{maximum}} #2}

\usepackage{algorithm}
\usepackage{algpseudocode}
\usepackage{graphicx}

\usepackage{color}
\usepackage{xcolor}

\newcommand{\stateSimplePsiMin}[0]{Simple-Psi-Minimization}
\newcommand{\callSimplePsiMin}[1]{\hyperref[alg:PGD]{\Call{\stateSimplePsiMin}{#1}}}

\newcommand{\stateEscapeExactLocalMin}[0]{Escape-Exact-Local-Min}
\newcommand{\callEscapeExactLocalMin}[1]{\hyperref[alg:escape-exact-local-min]{\Call{\stateEscapeExactLocalMin}{#1}}}

\newcommand{\stateSafePsiMin}[0]{Safe-Psi-Minimization}
\newcommand{\callSafePsiMin}[1]{\hyperref[alg:safe-PGD]{\Call{\stateSafePsiMin}{#1}}}

\newcommand{\stateFixDeg}[0]{Fix-Degeneracy}
\newcommand{\callFixDeg}[1]{\hyperref[alg:fix-deg]{\Call{\stateFixDeg}{#1}}}

\newcommand{\Forward}[1]{\Call{Forward}{#1}}

\newcommand{\Z}{Z}
\renewcommand{\S}{S}

\newcommand{\gain}[1]{t_{#1}}
\newcommand{\hatGain}[1]{\hat{t}_{#1}}
\newcommand{\totalGain}{\tau}

\def\fStar{f_{*}}

\usepackage{caption}
\usepackage{subcaption}

\usepackage{wrapfig}

\usepackage{todonotes}
\usepackage{comment}
\usepackage{multirow}

\setlength{\textfloatsep}{0.2\baselineskip plus 0.2\baselineskip minus 0.2\baselineskip}
\setlength{\floatsep}{0.2\baselineskip plus 0.2\baselineskip minus 0.2\baselineskip}

\usepackage[compact]{titlesec}
\titlespacing\section{2pt}{0pt plus 0pt minus 2pt}{0pt plus 1pt minus 1pt}
\titlespacing\subsection{2pt}{0pt plus 0pt minus 2pt}{1pt plus 0pt minus 2pt}
\titlespacing\subsubsection{2pt}{0pt plus 0pt minus 2pt}{1pt plus 0pt minus 2pt}
\titlespacing\paragraph{0pt}{0pt plus 0pt minus 2pt}{7pt plus 2pt minus 0pt}

\title{An efficient nonconvex reformulation of stagewise convex optimization problems}

%

\author{%
Rudy Bunel$^*$ \\
DeepMind \\
\texttt{rbunel@google.com} \\
\And
Oliver Hinder\thanks{Equal contribution} \\
Google Research, University of Pittsburgh \\
\texttt{ohinder@pitt.edu}
\AND
Srinadh Bhojanapalli \\
Google Research \\
\texttt{bsrinadh@google.com} \\
\And
Krishnamurthy (Dj) Dvijotham \\
DeepMind \\
\texttt{dvij@google.com}
}

\begin{document}

\maketitle
 
\begin{abstract}
    Convex optimization problems with staged structure appear in several contexts, including optimal control, verification of deep neural networks, and isotonic regression.
    Off-the-shelf solvers can solve these problems but may scale poorly.
    We develop a nonconvex reformulation designed to exploit this staged structure. Our reformulation has only simple bound constraints, enabling solution via projected gradient methods and their accelerated variants. 
    The method automatically generates a sequence of primal and dual feasible solutions to the original convex problem, making optimality certification easy.
    We establish theoretical properties of the nonconvex formulation, showing that it is (almost) free of spurious local minima and has the same global optimum as the convex problem.
    We modify PGD to avoid spurious local minimizers so it always converges to the global minimizer. For neural network verification, our approach obtains small duality gaps in only a few gradient steps. 
    Consequently, it can quickly solve large-scale verification problems  faster than both off-the-shelf and specialized solvers.
\end{abstract}

\section{Introduction}


This paper studies efficient algorithms for a particular class of stage-wise optimization problems:
\begin{subequations}\label{eq:convex-relaxation}
\begin{flalign}
& \minimize{(s, z) \in S \times \R^{n}}{ f(s, z)} \\
\text{s.t. }& \mu\ind{i}(s, z\ind{1:i-1}) \le z\ind{i} \le  \eta\ind{i}(s, z\ind{1:i-1})   & \forall i \in \{1, \dots, n \} \label{eq:convex-relaxation:constraints}
\end{flalign}
\end{subequations}
where $n$ and $m$ are positive integers, $S \subseteq \R^{m}$, the function $f$ has domain $S \times \R^{n}$ and range $\R$, the functions $\mu_i$ and $\eta_i$ have domain $S \times \R^{i-1}$ and range $\R$. Given a vector $z$, we use the notation $z_{1:i}$ to denote the vector $[z_1, \dots, z_i]$. We let $z_{1:0}$ be a vector of length zero. Throughout the paper we assume that $\eta\ind{1}, \dots, \eta_{n}$ are proper concave functions, $f$, $\mu\ind{1}, \dots, \mu\ind{n}$ are proper convex functions, and $S$ is a nonempty convex set.

Problems that fall into this problem class are ubiquitous. They appear in optimal control \cite{lewis2012optimal}, finite horizon Markov decision processes with cost function controlled by an adversary \cite{mcmahan2003planning}, generalized Isotonic regression \cite{gamarnik2019sparse,luss2012efficient},
and verification of neural networks \citep{bunel2018unified,Dvijotham2018, Wong2018provable}.
Details explaining how these problems can be written in the form of \eqref{eq:convex-relaxation} are given in Appendix~\ref{appendix:examples-with-this-structure}. Here we briefly outline how neural network verification falls into \eqref{eq:convex-relaxation:constraints}. Letting $s$ represent the input image and $z$ the activation values, neural networks verification can be written (unconventionally) as
$$
\minimize{(s, z) \in S \times \R^{n}}{ f(s, z)} \text{ s.t. } z_i = \sigma ( [s, z_{1:i-1}] \cdot w_i ),
$$
for (sparse) weight vectors $w_i$ and activation function $\sigma$.
A convex relaxation is created by picking functions satisfying $\mu_i(s, z_{1:i-1}) \le \sigma_i ( [s, z_{1:i-1}] \cdot w_i ) \le \eta_i(s, z_{1:i-1})$ for all $s$ and $z$ feasible to the original problem.
Solving these convex relaxations with traditional methods can be time consuming. 
For example, \citet{salman2019convex} reports spending 22 CPU years to solve problems of this type in order to evaluate the tightness of their proposed relaxation.
Consequently, methods for solving these relaxations faster are valuable. 
\subsection{Related work}

\subsubsection{Drawbacks of standard solvers for stagewise convex problems}\label{sec:drawbacks-of-existing-methods}
Standard techniques for solving \eqref{eq:convex-relaxation} can be split into two types: first-order methods and second-order methods. These techniques do not exploit this stage-wise structure, and so they face limitations.

\paragraph{First-order methods:} Methods such as mirror prox \cite{nemirovski2004prox}, primal-dual hybrid gradient (PDHG) \cite{chambolle2011first}, augmented lagrangian methods \cite{conn1996numerical}, and subgradient methods \cite{shor2012minimization} have cheap iterations (i.e., a matrix-vector multiply) but may require many iterations to converge. For example, 
\begin{flalign} \label{eq:hard-first-order-method-instance}
\minimize{x}{&-x_n} \quad \text{s.t.} \quad x_1 \in [0, 1], \quad -1 \le x_i \le x_{i-1} \quad \forall i \in \{1, \dots, n - 1\}
\end{flalign}
is an instance of \eqref{eq:convex-relaxation} with optimal solution at $x = \mathbf{1}$. However, this is the type of problem that exhibits the worst-case performance of a first-order method. In particular, one can show (see Appendix~\ref{appendix:lower-bound}) using the techniques of \citet[Section~2.1.2]{nesterov2013introductory} it will take at least $n-1$ iterations until methods such as PDHG or mirror-prox obtain an iterate with $x_1 > 0$ starting from $x = \mathbf{0}$. Furthermore, existing first-order methods are unable to generate a sequence of primal feasible solutions. This makes constructing duality gaps challenging. We could eliminate these constraints using a projection operator, but in general this will require calling a second-order method at each iteration, making iterations more expensive.

\paragraph{Second-order methods:} Methods such as interior point and simplex methods rely on factorizing a linear system, and can suffer from speed and memory problems on large-scale problems if the sparsity pattern is not amenable to factorization. This issue, for example, occurs in the verification of neural networks as dense layers force dense factorizations.

\subsubsection{Other nonconvex reformulations of convex problems}
Most research on nonconvex reformulations of convex problems is for semi-definite programs \cite{ boumal2016non, burer2003nonlinear, burer2005local}. In this work, the semi-definite variable is rewritten as the sum of low rank terms, forgoing convexity but avoiding storing the full semi-definite variable. 
Compared with this line of research our technique is unique for several reasons.
Firstly, our primary motivation is speed of convergence and obtaining certificates of optimality, rather than reducing memory or iteration cost. Secondly, the landscape of our nonconvex reformulation is different. For example, it contains spurious local minimizers (as opposed to saddle points) which we avoid via careful algorithm design.

\section{A nonconvex reformulation of stagewise convex problems}
We now present the main technical contribution of this paper, i.e., a nonconvex reformulation of the stagewise convex problems of the form \eqref{eq:convex-relaxation} and an analysis of efficient projected gradient algorithms applied to this formulation. 

\subsection{Assumptions}\label{sec:setup}
We begin by specifying assumptions we make on the objective and constraint functions in \eqref{eq:convex-relaxation}. Prior to doing so, it will be useful to introduce the notion of a smooth function:
\begin{definition}\label{def:smoothness}
A function $h : X \rightarrow \R$ is smooth if $\grad h(x)$ exists and is continuous for all $x \in X$; $h$ is $L$-smooth if $\| \grad h (x) - \grad h (x') \|_2 \le L \| x - x' \|_2, \forall x, x' \in X$.
\end{definition}

\begin{assumption}\label{assume:smooth}
Assume $f, \eta\ind{1}, \dots, \eta_{n}$, $\mu\ind{1}, \dots, \mu\ind{n}$ are smooth functions.
\end{assumption}

\begin{remark}\label{remark:smoothing-issue}
If Assumption~\ref{assume:smooth} fails to hold it is may be possible to approximate $f, \eta_i$ and $\mu_i$ by smooth functions \cite{nesterov2005smooth}. It is also possible one could use a nonsmooth optimization method \cite{davis2018stochastic}. However, we leave the study of these approaches to future work.
\end{remark}

Let $\Pi_{S}$ denote the projection operator   onto the set $S$. Ideally, the cost of this projection is cheap (e.g., $S$ is formed by simple bound constraints) as we will be running projected gradient descent (PGD) and therefore routinely using projections.

\begin{assumption}\label{assume:bounded-set}
Assume $S$ is a bounded set with diameter $D_s = \sup_{s,\hat{s} \in S} \| s - \hat{s} \|_2$. Further assume $Z$ is a bounded set such that for every feasible solution $(s,z)$ to \eqref{eq:convex-relaxation} we have $z \in Z$. Define $D_z = \sup_{z, \hat{z} \in Z} \| \hat{z} - z \|_2$.
\end{assumption}

 We remark that if $\eta$ and $\mu$ are smooth, and $S$ is bounded then there exists a set $Z$ satisfying Assumption~\ref{assume:bounded-set}. The primary reason for Assumption~\ref{assume:bounded-set} is it will allow us to form lower bounds on the optimal solution to \eqref{eq:convex-relaxation}. We will also find it useful to be able to readily construct upper bounds, i.e., feasible solutions to \eqref{eq:convex-relaxation}. This is captured by the following assumption.

\begin{assumption}\label{assume:inductive}
For all $i \in \{1, \dots, n \}$,
if $s \in S$ and $\mu_j(s, z_{1:j-1}) \le z_{j} \le \eta_j(s, z_{1:j-1})$ for $j \in \{1, \dots, i -1 \}$ then $\mu_i(s, z_{1:i-1}) \le \eta_i(s, z_{1:i-1})$.
\end{assumption}

 Assumption~\ref{assume:inductive} is equivalent to stating that feasible solutions to \eqref{eq:convex-relaxation} can be constructed inductively. In particular, given we have a feasible solution to the first $1, \dots, i - 1$ constraints we can find a feasible solution for the $i$th constraint by picking any $z_i \in [\mu_i(s, z_{1:i-1}), \eta_i(s, z_{1:i-1})]$ which must be a nonempty set by Assumption~\ref{assume:inductive}.
 All examples discussed in Appendix~\ref{appendix:examples-with-this-structure} satisfy Assumption~\ref{assume:inductive}.

\subsection{A nonconvex reformulation}

 Our idea is to apply PGD to the following nonconvex reformulation of \eqref{eq:convex-relaxation}, 
\begin{subequations}\label{eq:generic-nonconvex-reformulation}
\begin{flalign}
& \minimize{(s, z, \theta) \in S \times \R^{n} \times [0,1]^{n}}{f(s, z)} \\
\text{s.t. } & z\ind{i} = (1 - \theta_{i}) \mu\ind{i}(s, z\ind{1:i-1}) + \theta\ind{i} \eta\ind{i}(s, z\ind{1:i-1})  & \forall i \in \{1, \dots, n \}. \label{cons:nonconxex-xn-theta}
\end{flalign}
\end{subequations}
The basis of this reformulation is that if $\mu\ind{i}(s, z\ind{1:i-1}) \le z_i \le \eta\ind{i}(s, z\ind{1:i-1})$ then $z_i$ is a convex combination of $\mu\ind{i}(s, z\ind{1:i-1})$ and $\eta\ind{i}(s, z\ind{1:i-1})$.
This reformulation allows us to replace the $z$ variables with $\theta$ variables and replaces the constraints \eqref{eq:convex-relaxation:constraints} that are difficult to project onto with box constraints. 
For conciseness we denote \eqref{cons:nonconxex-xn-theta} by
$$
z \gets \Forward{s,\theta}.
$$
Let us consider an alternative interpretation of \eqref{eq:generic-nonconvex-reformulation} that explicitly replaces $z$ with $\theta$. Define $\psi\ind{n}(s, z) := f(s, z)$
and recursively define $\psi_i$ for all $i \in \{1, \dots, n \}$ by
\begin{flalign*}
\psi\ind{i-1}(s, z_{1:i-1}, \theta_{i:n}) &:= \psi\ind{i}(s, z_{1:i-1}, (1 - \theta_{i}) \mu_{i}(s, z_{1:i-1}) + \theta\ind{i} \eta_{i}(s, z_{1:i-1}), \theta_{i+1:n}).
\end{flalign*}
Note that $\psi_{i-1}$ eliminates the variable $z_i$ from $\psi_i$ by replacing it with $(1 - \theta_{i}) \mu_{i}(s, z_{1:i-1}) + \theta\ind{i} \eta_{i}(s, z_{1:i-1})$.
Using this notation, the reformulation \eqref{eq:generic-nonconvex-reformulation} is equivalent to:
\begin{flalign}
\minimize{(s, \theta) \in S \times [0, 1]^{n}}{\psi_0(s, \theta)}.
\end{flalign}
For intuition consider the following example
\begin{flalign}\label{example-values}
S := [-1,1], \quad f(s_1,z_1) := z_1, \quad \eta_1(s_1) := 1 - s_1^2, \quad \mu_1(s_1) := s_1^2 - 1.
\end{flalign}
In Figure~\ref{fig:introduce-reformulation} we plot this example. Consider an arbitrary feasible point, e.g., $z_1 = 0.0$, $s_1 = 0.5$ and note that point can be written as a convex combination of a point on $\eta$ and a point on $\mu$.
The nonconvex reformulation does this explicitly with box constraints replacing nonlinear constraints.

\begin{figure}[htbp]
    \centering
     \begin{tabular}{ c c }
    \includegraphics[height=110pt]{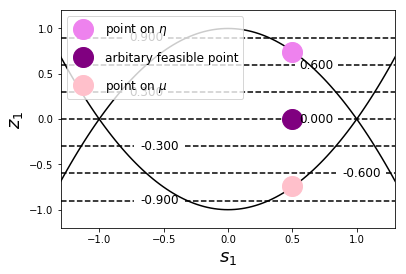} &
    \includegraphics[height=110pt]{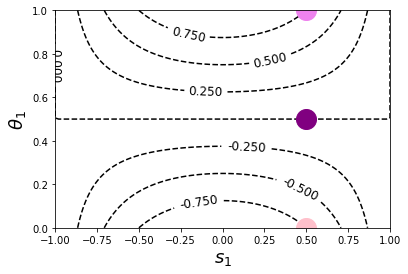} \\
    Plot of original convex problem & Plot of nonconvex reformulation $\psi_{0}(s_1, \theta_1)$
    \end{tabular}
    \caption{Comparison between original problem and reformulation.}  \label{fig:introduce-reformulation}
\end{figure}

The function $\psi_0$ is smooth (since it is the composition of smooth functions), and its gradient is computable by backpropagation, i.e., $\grad \psi_{n} = \grad f$ and for $i = n, \dots, 1$,
\begin{subequations}
\begin{flalign}
\grad_{s} \psi_{i-1} &= \grad_{s} \psi_{i} + \frac{\partial \psi_{i}}{\partial z_{i}} \left( \theta_{i} \grad_s \eta_{i} + ( 1 - \theta_{i}) \grad_s \mu_{i} \right) \label{eq:grad-psi-s} \\
\frac{\partial \psi_{i-1}}{\partial z_j} &=\frac{\partial \psi_{i}}{\partial z_j} + \frac{\partial \psi_{i}}{\partial z_{i}} \left( \theta_{i} \frac{\partial \eta_{i}}{\partial z_j} + ( 1 - \theta_{i}) \frac{\partial\mu_{i}}{\partial z_j} \right) \quad \forall j \in \{1, \dots, i - 1 \} \label{eq:grad-psi-z} \\
\frac{\partial \psi_0}{\partial \theta_i} &=
\frac{\partial \psi_i}{\partial \theta_i} = 
\frac{\partial \psi_{i}}{\partial z_{i}}\frac{\partial z_{i}}{\partial \theta_{i}} = 
\frac{\partial \psi_{i}}{\partial z_{i}}(\eta_i - \mu_i) 
\label{eq:grad-psi-theta}
\end{flalign}
\end{subequations}
where we denote $f = f(s, z)$,
$\psi_i = \psi_i(s, z_{1:i-1}, \theta_{i:n})$,
$\eta_i = \eta_i(s, z_{1:i-1})$, and
$\mu_i = \mu_i(s, z_{1:i-1})$;
this abuse of notation, where we assume these functions are evaluated at $(s, z, \theta)$ unless  specified otherwise, will continue throughout the paper for the purposes of brevity.
The subscript on $\grad$ specifies the arguments the derivative is with respect to, if it is left blank then we take the derivatives with respect to all arguments. Therefore, one can apply PGD, or other related descent algorithm to minimize $\psi_0$.
Moreover, the cost of computing the gradient via backpropagation is cheap (dominated by the cost of evaluating $\grad f$, $\grad \eta$, and $\grad \mu$). However, since $\psi_0$ is nonconvex, it is unclear whether a gradient based approach will find the global optimum. 

We show that this is indeed the case in the following subsections: In section \ref{sec:global}, we show that global minima are preserved under the nonconvex reformulation. In section \ref{sec:FO-global-in-nondegenerate-case}, show that \emph{nondegenerate} local optima are global optima and that projected gradient descent converges quickly to these. In section \ref{sec:fixing-the-degenerate-case}, we show how to modify projected gradient descent to avoid convergence to degenerate local optima and ensure convergence to a global optimum.

\subsection{Nonconvex reformulation is equivalent to original convex problem}\label{sec:global}
Before arguing that the local minimizers of \eqref{eq:generic-nonconvex-reformulation} are equal to the global minimizers of \eqref{eq:convex-relaxation}, it is important to confirm that the global minimizers are equivalent. Indeed, Theorem~\ref{lem:equivalence} confirms this.

\begin{theorem}\label{lem:equivalence}
Any feasible solution to \eqref{eq:convex-relaxation} corresponds to a feasible solution for \eqref{eq:generic-nonconvex-reformulation} with the same objective value.
Furthermore, if $\mu_i \le \eta_i$ for all $i \in \{1, \dots, n \}$ and $(s,z)$ feasible to \eqref{eq:generic-nonconvex-reformulation}, then any feasible solution to \eqref{eq:generic-nonconvex-reformulation} corresponds to a feasible solution for \eqref{eq:convex-relaxation} with the same objective value. In which case, the global optimum of \eqref{eq:generic-nonconvex-reformulation} is same as the global optimum of \eqref{eq:convex-relaxation}.
\end{theorem}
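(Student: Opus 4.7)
My plan is a direct, constructive back-and-forth between the feasible sets of \eqref{eq:convex-relaxation} and \eqref{eq:generic-nonconvex-reformulation}. Since the objective in both problems is $f(s,z)$, equality of objective values at corresponding feasible points will be automatic, so the entire content of the theorem is showing that the two feasible sets are in a value-preserving correspondence; the statement about global optima then follows.

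For the first direction, take any $(s,z)$ feasible to \eqref{eq:convex-relaxation}. The goal is to produce $\theta \in [0,1]^n$ with $(s,z,\theta)$ feasible to \eqref{eq:generic-nonconvex-reformulation}. For each $i$, the constraint $\mu_i(s,z_{1:i-1}) \le z_i \le \eta_i(s,z_{1:i-1})$ forces $\mu_i(s,z_{1:i-1}) \le \eta_i(s,z_{1:i-1})$ (by transitivity). I would define
\[
\theta_i :=
\begin{cases}
\dfrac{z_i - \mu_i(s,z_{1:i-1})}{\eta_i(s,z_{1:i-1}) - \mu_i(s,z_{1:i-1})}, & \text{if } \mu_i(s,z_{1:i-1}) < \eta_i(s,z_{1:i-1}),\\[4pt]
0, & \text{if } \mu_i(s,z_{1:i-1}) = \eta_i(s,z_{1:i-1}).
\end{cases}
\]
Then $\theta_i \in [0,1]$, and a direct substitution verifies the equality constraint \eqref{cons:nonconxex-xn-theta}: in the nondegenerate case it is the definition of $\theta_i$, and in the degenerate case $z_i = \mu_i = \eta_i$ trivially satisfies $z_i = (1-\theta_i)\mu_i + \theta_i \eta_i$ for any $\theta_i$. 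Since $s \in S$ and $z$ is unchanged, the objective value is preserved.

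For the second direction, let $(s,z,\theta)$ be feasible to \eqref{eq:generic-nonconvex-reformulation} and suppose $\mu_i(s,z_{1:i-1}) \le \eta_i(s,z_{1:i-1})$ for every $i$. The equality constraint \eqref{cons:nonconxex-xn-theta} expresses $z_i$ as a convex combination of $\mu_i(s,z_{1:i-1})$ and $\eta_i(s,z_{1:i-1})$ with coefficient $\theta_i \in [0,1]$; hence
\[
\mu_i(s,z_{1:i-1}) \le z_i \le \eta_i(s,z_{1:i-1}) \quad \text{for all } i.
\]
Together with $s \in S$ this is exactly feasibility for \eqref{eq:convex-relaxation}, and again $f(s,z)$ is unchanged.

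The global-optimum claim then follows immediately. Any feasible $(s,z)$ for \eqref{eq:convex-relaxation} gives rise to a feasible $(s,z,\theta)$ for \eqref{eq:generic-nonconvex-reformulation} with the same objective, so the optimal value of \eqref{eq:generic-nonconvex-reformulation} is at most that of \eqref{eq:convex-relaxation}; conversely, under the hypothesis $\mu_i \le \eta_i$ every feasible point of \eqref{eq:generic-nonconvex-reformulation} projects to a feasible point of \eqref{eq:convex-relaxation} with the same objective, giving the reverse inequality. I expect no genuine obstacle here; the only subtlety is being careful about the degenerate case $\mu_i = \eta_i$ in the first direction (which I handled by choosing $\theta_i$ arbitrarily) and noting that the hypothesis $\mu_i \le \eta_i$ in the second direction is a pointwise statement along the trajectory $(s,z)$, which is the natural assumption to guarantee $z_i$ is a proper convex combination—and which, it is worth remarking, is automatically ensured by Assumption~\ref{assume:inductive} via induction on $i$.
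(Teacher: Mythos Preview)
Your proposal is correct and matches the paper's own proof essentially line for line: define $\theta_i = (z_i - \mu_i)/(\eta_i - \mu_i)$ (with any $\theta_i \in [0,1]$ in the degenerate case) for the forward direction, and use that a convex combination of $\mu_i \le \eta_i$ lies between them for the reverse direction. The paper's proof is terser and leaves the global-optimum conclusion implicit, but the argument is the same.
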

\begin{proof}
Consider any feasible solution $(s,z)$ to \eqref{eq:convex-relaxation}. By setting $\theta_i = \frac{z_i - \mu_i}{\eta_i - \mu_i}$ (any $\theta_i \in [0,1]$ suffices if $\mu_i = \eta_i$) we obtain a feasible solution to \eqref{eq:generic-nonconvex-reformulation}. On the other hand, if $\mu_i \le \eta_i$ then \eqref{cons:nonconxex-xn-theta} and $\theta_i \in [0,1]$ implies $\mu_i \le z_i \le \eta_i$.
\end{proof}
\begin{wrapfigure}[10]{r}{0.4\textwidth}
\vspace{-.9cm}
    \centering
    \includegraphics[width=\textwidth]{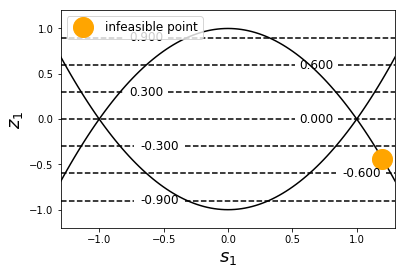}
    \caption{Infeasible: Assumption~\ref{assume:inductive} fails.}
        \label{fig:infeasibility-example}
\end{wrapfigure}

A sufficient condition for the premise of Theorem~\ref{lem:equivalence} to hold is Assumption~\ref{assume:inductive}. As Figure~\ref{fig:infeasibility-example} shows, if Assumption~\ref{assume:inductive} fails then the nonconvex reformulation can generate infeasible solutions to the original convex optimization problem \eqref{eq:convex-relaxation:constraints}. Consider the example given by \eqref{example-values} except with $S := [-1.5,1.5]$ instead of $S := [-1,1]$.
The set of feasible solutions to \eqref{eq:convex-relaxation} is enclosed by the two curves. At $s_1 = 1.2$ and $\theta = 1$, $\mu(s_1) > \eta(s_1)$, which is infeasible.

\subsection{Analysis of nondegenerate local optima}\label{sec:FO-global-in-nondegenerate-case}

This section is devoted to proving that under a nondegeneracy assumption, the first-order stationary points of \eqref{eq:generic-nonconvex-reformulation} are global minimizers.
Degeneracy issues arise when $\eta_i = \mu_i$.
In this situation, if $\theta_i$ changes, then $z$ will remain the same, and therefore from the perspective of the convex formulation, the solution is the same.
However, from the perspective of the function $\psi_0$ there is an important difference.
In particular, as $\theta_i$ changes the gradient of $\psi_0$ changes.
Consequently, certain values of $\theta_i$ may generate spurious local minimizers.
Recall example \eqref{example-values}, i.e., $S := [-1,1]$, $f(s_1,z_1) := z_1$, $\eta_1(s_1) := 1 - s_1^2$ and $\mu_1(s_1) := s_1^2 - 1$. In this instance,
\begin{flalign*}
\psi_0 = \theta_1 (1 - s_1^2) + (1 - \theta_1) (s_1^2 - 1) = (1 - 2 \theta_1) (s_1^2 - 1), \quad \frac{\partial \psi_0}{\partial s_1} = (1 - 2 \theta_1) (2 s_1 - 1).
\end{flalign*}
As illustrated in Figure~\ref{fig:degenerancy-psi-example}, the global minimizer is $s_1 = 0, \theta = 0 \Rightarrow z_1 = -1$. If $s_1 \pm 1$ then for all $\theta_1 \in [0,1]$ we have $z_1 = 0$. Moreover, the points $s_1 \pm 1$, $\theta_1 \in (0.5,1]$ are spurious local minimizers. To see this, note for all $\theta_1 \in [0.5,1]$, and $s_1 \in S$ that $\psi_0(s_1, \theta_1) \ge 0 = \psi_0(1, \theta_1)$. In contrast, the points $s_1 \pm 1$, $\theta \in [0, 0.5)$ are \emph{not} local minimizers, because for $s_1 \pm 1$ and $\theta_1 \in [0,0.5)$ we have $\frac{\partial \psi_0}{\partial s_1} > 0$ implying that gradient descent steps move away from the boundary. We conclude that if $\mu_i = \eta_i$ \emph{certain} values of $\theta_i$ could be spurious local minimizers. We emphasize the word \emph{certain} because, as  Section~\ref{sec:fixing-the-degenerate-case} details, there is always a value of $\theta_i$ that enables escape.

The nondegeneracy assumption we make is that for some $\gamma \ge 0$ the set
\begin{flalign*}
\mathcal{K}_{\gamma}(s, \theta) := \bigg\{ & i \in \{1, \dots, n \} : \quad z = \text{Forward}(s, \theta), \\
& \quad \eta_i - \mu_i \le \gamma, \quad \theta_i  \left( \frac{\partial \psi_i}{\partial z_i} \right)^{+} + (1 - \theta_i) \left( \frac{\partial \psi_i}{\partial z_i}\right)^{-} > 0 \bigg\}
\end{flalign*}
is empty, where $(\cdot)^{+} := \max\{ \cdot, 0 \}$ and $(\cdot)^{-} := \min\{ \cdot, 0 \}$. 
If the set $\mathcal{K}_{0}(s, \theta)$ is non-empty then any coordinate $i \in \mathcal{K}_{0}(s, \theta)$ could be causing a spurious local minimizer.
Values of $\gamma$ strictly greater than zero ensures that we do not get arbitrarily close to a degenerate point.
We will show this nondegeneracy assumption guarantees that first-order stationary points are global minimizers.

\begin{figure}[htbp]
    \centering
     \begin{tabular}{ c c }
    \includegraphics[height=110pt]{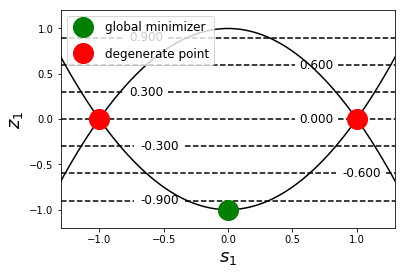} &
    \includegraphics[height=110pt]{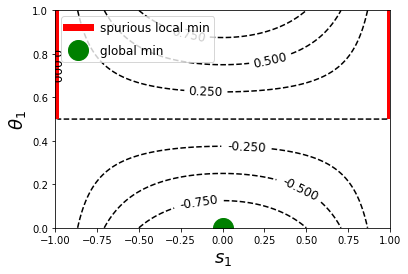} \\
    Plot of original convex problem & Plot of nonconvex reformulation $\psi_{0}(s_1, \theta_1)$
    \end{tabular}
    \caption{Example of degeneracy causing spurious local minimizers  when $s_1 \pm 1$.}  \label{fig:degenerancy-psi-example}
\end{figure}

While our nondegeneracy assumption holds it will suffice to run PGD which is defined as
$$
(s^{+},\theta^{+}) \gets (s,\theta) + \argmin_{d \in \mathcal{D}(s,\theta)}{\grad \psi_0 \cdot d + \frac{L}{2} \| d \|_2^2},
$$
where $\mathcal{D}(s,\theta) := \{ d : (s, \theta) + d \in S \times [0,1]^{n} \}$ is the set of feasible search directions and $L$ is the smoothness of $\psi_0$ (see Definition~\ref{def:smoothness}). A useful fact is that PGD satisfies
$\psi_0(s^{+},\theta^{+}) \le \psi_0(s, \theta) - \delta_{L}(s, \theta)$ for
\begin{flalign*}
\delta_{\sm}(s,\theta) := -\minimize{d \in \mathcal{D}(s,\theta)}{~\grad \psi_0 \cdot d + \frac{\sm}{2} \| d \|_2^2}.
\end{flalign*}
See \cite[Lemma 2.3.]{beck2009fast} for a proof. In other words, $\delta_L(s, \theta)$ represents the minimum progress of PGD. Once again for brevity we will denote $\delta_{\sm}(s,\theta)$ by $\delta_L$.
Note that if $\delta_L$ is zero then we are at a first-order stationary point of $\psi_0$. The remainder of this section focuses on proving that $\delta_L$ provides an upper bound on the optimality gap.
To form this bound we use Lagrangian duality. In particular, the Lagrangian of \eqref{eq:convex-relaxation} is:
\begin{flalign*}
\mathcal{L}(s, z, y) := f + \sum_{i=1}^{n} ( y\ind{i}^{+} \mu\ind{i} - y\ind{i}^{-} \eta\ind{i} - y\ind{i} z_{i})
\end{flalign*}
where $y^{+}_i = \max\{y_i, 0\}$, and $y^{-}_i = \max\{-y_i, 0 \}$. 
 We will denote $\lag(s, z, y)$ by $\lag$. Define,
\begin{flalign}\label{eq:form-dual-bound}
\Delta(s,\theta) := \sum_{i=1}^{n} ( y\ind{i} z_{i} - y\ind{i}^{+} \mu\ind{i} + y\ind{i}^{-} \eta\ind{i}) + \sup_{(\hat{s},\hat{z}) \in \S \times \Z} \grad_{s,z} \lag \cdot (\hat{s} - s, \hat{z} - z)
\end{flalign}
with $z = \Forward{s,\theta}$ and  $y_i = \frac{\partial \psi_i}{\partial z_i}$. If $(s,z)$ is feasible to \eqref{eq:convex-relaxation} we conclude $\Delta(s,\theta)$ is a valid duality gap, i.e., provides global guarantees, because by duality, convexity and \eqref{eq:form-dual-bound},
\begin{flalign}
\label{eq:valid-duality-gap}
\fStar \ge \inf_{(\hat{s}, \hat{z}) \in S \times Z}\lag(\hat{s},\hat{z},y) \ge \lag + \inf_{(\hat{s},\hat{z}) \in \S \times \Z} \grad_{s,z} \lag \cdot (\hat{s} - s, \hat{z} - z) = f - \Delta(s,\theta).
\end{flalign}
To compute  $\Delta(s,\theta)$, one needs to be able to efficiently minimize a linear function over the set $Z$. For this reason, one should choose $Z$ to have a simple form (i.e., bound constraints).

\begin{assumption}
\label{assume:V-W}
There exists a constant $c > 0$ such that 
$\| \eta - \mu \|_2 + D_s \| \grad_s \eta - \grad_s \mu \|_2 + D_z \| \grad_z \eta - \grad_z \mu \|_2 \le c$ for all $(s,z)$ that are feasible to \eqref{eq:convex-relaxation:constraints}.
\end{assumption}

In Assumption~\ref{assume:V-W}, observe that $\grad_s \eta - \grad_s \mu$ and $\grad_z \eta - \grad_z \mu$ are matrices so  $\| \cdot \|_2$ is the spectral norm.
Also, note that Assumption~\ref{assume:smooth} and~\ref{assume:bounded-set} imply that Assumption~\ref{assume:V-W} must hold. However, we add Assumption~\ref{assume:V-W} because it makes the constant $c$ explicit.


\begin{lemma}[Nondegenerate first-order stationary points are optimal]
\label{lem:progress-relationship-to-gap}
Suppose Assumption~\ref{assume:bounded-set} and \ref{assume:V-W} hold.
Suppose also that $\mathcal{K}_{\gamma}(s,\theta) = \emptyset$ with $\gamma \in (0, \infty)$, and that $\delta_{\sm} \le L / 2$. 
Then $\Delta(s,\theta)^2 \le L \left( D_s \sqrt{2} + 2 \gamma^{-1} c \right)^2  \delta_L$.
\end{lemma}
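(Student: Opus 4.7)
The plan is to produce an explicit feasible direction $\tilde d = (\tilde d_s, \tilde d_\theta) \in \mathcal{D}(s,\theta)$ for which $-\grad \psi_0 \cdot \tilde d \ge \Delta(s,\theta)$ and $\|\tilde d\|_2^2 \le D_s^2 + c^2/\gamma^2$, then plug the rescaled direction $d = \epsilon \tilde d$ into the standard PGD progress bound $\delta_L \ge -\grad\psi_0 \cdot d - L\|d\|_2^2/2$ and optimize $\epsilon$ in one dimension. Since $\mathcal{D}$ is convex and contains $0$, every $\epsilon \tilde d$ with $\epsilon \in [0, \min\{1, \gamma/c\}]$ will be feasible, so the remaining work is essentially a one-variable quadratic maximization.

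To construct $\tilde d$ I would first relate $\grad \psi_0$ to the Lagrangian gradient via the backpropagation identities \eqref{eq:grad-psi-s}-\eqref{eq:grad-psi-theta}. Writing $y_i := \partial \psi_i/\partial z_i$, $y_i^\pm := \max\{\pm y_i, 0\}$, and $b_i := \theta_i y_i^+ + (1-\theta_i) y_i^-$, an unrolling of the recursion gives $\grad_s \psi_0 = \grad_s \lag + \sum_i b_i \grad_s(\eta_i - \mu_i)$, $\partial \psi_0/\partial \theta_i = y_i(\eta_i - \mu_i)$, and an analogous backward pass yields $\grad_z \lag = -\sum_i b_i \grad_z(\eta_i - \mu_i)$. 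Let $(\hat s^*, \hat z^*) \in S \times Z$ attain the sup inside the $B$-term of $\Delta$ and set $\Lambda_i := (\eta_i - \mu_i) + \grad(\eta_i - \mu_i)\cdot(\hat s^* - s, \hat z^* - z)$, so that $\sum_i b_i \Lambda_i = A + \sum_i b_i \grad_s(\eta_i - \mu_i)\cdot(\hat s^* - s) + B_z$. I would then take $\tilde d_s := \hat s^* - s$ (yielding $\|\tilde d_s\|_2 \le D_s$ and $-\grad_s \lag \cdot \tilde d_s = B_s$), and for each $i$ set $\sigma_i := \max\{\Lambda_i/(\eta_i - \mu_i), 0\}$ when $\eta_i - \mu_i > \gamma$ and $\sigma_i := 0$ otherwise, with $\tilde d_{\theta_i}$ equal to $-\theta_i \sigma_i$ if $y_i > 0$, $(1-\theta_i)\sigma_i$ if $y_i < 0$, and $0$ if $y_i = 0$. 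The nondegeneracy $\mathcal{K}_\gamma(s,\theta) = \emptyset$ forces $b_i = 0$ whenever $\eta_i - \mu_i \le \gamma$, so zeroing those coordinates loses nothing in $A$; the clip $\max\{\cdot, 0\}$ only tightens the bound because $b_i \ge 0$ and the dropped indices carry nonpositive contribution $b_i \Lambda_i$; and after the $\sum_i b_i \grad_s(\eta_i - \mu_i)\cdot(\hat s^* - s)$ terms cancel between $-\grad_s \psi_0 \cdot \tilde d_s$ and the $\Lambda_i$ expansion, one reads off $-\grad \psi_0 \cdot \tilde d \ge B_s + A + B_z = \Delta$.

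For the norm, Assumption~\ref{assume:V-W} provides $\|\Lambda\|_2 \le \|\eta - \mu\|_2 + \|\grad_s(\eta - \mu)\|_2 D_s + \|\grad_z(\eta - \mu)\|_2 D_z \le c$, hence $\|\tilde d_\theta\|_2^2 \le \sum_i \sigma_i^2 \le \|\Lambda\|_2^2/\gamma^2 \le c^2/\gamma^2$ and $\|\tilde d\|_2^2 \le D_s^2 + c^2/\gamma^2$. Componentwise $\sigma_i \le c/\gamma$, so $\epsilon \tilde d \in \mathcal{D}$ for $\epsilon \in [0, \min\{1, \gamma/c\}]$; optimizing $\epsilon \Delta - L\epsilon^2 \|\tilde d\|_2^2/2$ over that interval, with the hypothesis $\delta_L \le L/2$ ensuring the unconstrained optimum $\epsilon^* = \Delta/(L\|\tilde d\|_2^2)$ lies inside, delivers $\delta_L \ge \Delta^2/(2L\|\tilde d\|_2^2)$; the final inequality then follows from the elementary identity $(D_s\sqrt{2} + 2c/\gamma)^2 \ge 2(D_s^2 + c^2/\gamma^2)$. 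The main obstacle is engineering $\tilde d_\theta$ so that it delivers the full $A + B_z$ contribution while keeping $\|\tilde d_\theta\|_2$ bounded by $c/\gamma$ rather than growing with $\sqrt{n}$; the nondegeneracy $\mathcal{K}_\gamma = \emptyset$ is precisely what makes it legal to divide by $\eta_i - \mu_i \ge \gamma$ on the only indices where $b_i$ is nonzero.
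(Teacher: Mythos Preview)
Your route is genuinely different from the paper's. The paper first proves $\Delta(s,\theta) \le D_s\sqrt{2L\delta_L} + c\,\|r\|_2$ with $r_i = \theta_i y_i^+ + (1-\theta_i)y_i^-$ (Lemmas~\ref{lem:preliminary-bound}--\ref{lem:generic-gap-bound}), and then bounds each $r_i$ by a \emph{per-coordinate} analysis of the PGD step in $\theta_i$ (Lemma~\ref{lem:r-i-bound}); the two regimes there---unconstrained step versus step hitting the box---produce the two branches $\sqrt{2Lt_{i,L}}$ and $2t_{i,L}$, and the hypothesis $\delta_L\le L/2$ is precisely what lets $\sum_i 4t_{i,L}^2$ be absorbed into $2L\delta_L$ after summing. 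Your idea of packing everything into a single direction $\tilde d$ with $-\grad\psi_0\cdot\tilde d \ge \Delta$ and $\|\tilde d\|_2^2 \le D_s^2 + c^2/\gamma^2$ is elegant: the gradient identities you invoke are exactly those in Lemma~\ref{lem:gradient-bounds}, the cancellation $-\grad_s\psi_0\cdot\tilde d_s + \sum_i b_i\Lambda_i = A+B_s+B_z$ checks out, and $\|\Lambda\|_2\le c$ is Assumption~\ref{assume:V-W}. If the rest went through you would even get the sharper constant $2(D_s^2+c^2/\gamma^2)$.

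The gap is in your last step. You assert that $\delta_L\le L/2$ forces $\epsilon^* = \Delta/(L\|\tilde d\|_2^2)$ to lie in $[0,\min\{1,\gamma/c\}]$, but this does not follow: if $\epsilon^*>\bar\epsilon:=\min\{1,\gamma/c\}$, plugging in $\epsilon=\bar\epsilon$ only yields $\delta_L \ge \tfrac{L}{2}\bar\epsilon^2\|\tilde d\|_2^2$, and you have no \emph{lower} bound on $\|\tilde d\|_2^2$ to push this above $L/2$. A concrete obstruction: when $\gamma\ge c$ every index satisfies $\eta_i-\mu_i \le \|\eta-\mu\|_2 \le c \le \gamma$, so all your $\sigma_i$ vanish, $\tilde d_\theta=0$, and $\|\tilde d\|_2=\|\hat s^*-s\|_2\le D_s$; for $D_s<1$ the constraint $\epsilon\le 1$ can bind with $\delta_L$ still far below $L/2$. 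This is exactly where the paper's coordinatewise treatment earns its keep: each box constraint is handled separately through the $2t_{i,L}$ branch of Lemma~\ref{lem:r-i-bound}, and $\delta_L\le L/2$ enters only after summing. Your argument \emph{can} be completed in the regime $\gamma\le c$ by handling the boundary case explicitly (there $\Delta\le 2(c/\gamma)\delta_L$, whence $\Delta^2\le 2L(c/\gamma)^2\delta_L \le L(2c/\gamma)^2\delta_L$ using $\delta_L\le L/2$), but the $s$-side constraint $\epsilon\le 1$ still requires a separate treatment that your single scalar $\epsilon$ cannot supply.
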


In the nondegenerate case (i.e., $\mathcal{K}_{\gamma}(s, \theta) = \emptyset$), $\delta_L$ upper bounds $\Delta(s,\theta)$. In particular, as Lemma~\ref{lem:progress-relationship-to-gap} demonstrates small progress by gradient steps implies small duality gaps. The proof of Lemma~\ref{lem:progress-relationship-to-gap} appears in Section~\ref{sec:proof-of:lem:progress-relationship-to-gap} and is technical. The core part of the proof of Lemma~\ref{lem:progress-relationship-to-gap} is bounding $\theta_i y^{+}_i +  (1 - \theta_i) y^{-}_i$ for $y_i = \frac{\partial \psi_i}{\partial z_i}$ in terms of $\gamma^{-1}$ and $\delta_L$. When $\theta_i y^{+}_i +  (1 - \theta_i) y^{-}_i \approx 0$ one can show that $\Delta(s,\theta) \approx  \sup_{\hat{s} \in S} \grad_s \lag \cdot (\hat{s} - s) \approx \sup_{\hat{s} \in S} \grad_s \psi \cdot (\hat{s} - s) \le D_s \sqrt{2 L \delta_L}$.

\subsubsection{Analysis of projected gradient descent}

Lemma~\ref{lem:progress-relationship-to-gap} provides the tool we need to prove the convergence of PGD in the nondegenerate case. The algorithm we analyze (Algorithm~\ref{alg:PGD}) includes termination checks for optimality. Furthermore, the PGD steps can be replaced by any algorithm that makes at least as much function value reduction as PGD would make in the worst-case.
For example, gradient descent with a backtracking line search and an Armijo rule \cite[Chapter~3]{nocedal2006numerical}, or 
a safeguarded accelerated scheme \cite{li2015accelerated} would suffice.

\begin{algorithm}[H]
\caption{Local search algorithm for minimizing $\psi_0$ in the nondegenerate case.}\label{alg:PGD}
\begin{algorithmic}[1]
\Function{\stateSimplePsiMin}{$s^{1}, \theta^{1},  \epsilon$}
\State Suppose $\psi_0$ is $L$-smooth. Note $L \in (0,\infty)$ need not be known.
\For{$k = 1, \dots, \infty$}
\State \textit{Termination checks:}
\If{$\Delta(s^{k}, \theta^{k}) \le \epsilon$}
\State \textit{Found an $\epsilon$-optimal solution:}
\State \Return $(s^{k}, \theta^{k})$
\EndIf
\State \textit{Reduce the function at least as much as PGD would:}
\State $(s^{k+1}, \theta^{k+1}) \in \{(s,\theta) : \psi_0(s, \theta) \le \psi_0(s^{k}, \theta^{k}) - \delta_{\sm}(s^{k}, \theta^k) \}$ \label{algline:optstep}
\EndFor
\EndFunction
\end{algorithmic}
\end{algorithm}

\begin{theorem}[PGD converges to global minimizer under nondegeneracy assumption]
\label{thm:simple-psi-minimization}
Suppose Assumption~\ref{assume:bounded-set}, \ref{assume:inductive} and \ref{assume:V-W} hold. Suppose $\psi_0$ is $L$-smooth, $\epsilon, \gamma \in (0,\infty)$, $(s^{1}, \theta^{1}) \in S \times [0,1]^{n}$, and $\mathcal{K}_{\gamma}(s^k,\theta^k) = \emptyset$ for all iterates of the algorithm \callSimplePsiMin{$s^{1}, \theta^{1}, \epsilon$}. Then, the algorithm terminates after at most
$$
1 + \frac{2 \Delta(s^{1}, \theta^{1})}{L} + 
\frac{\sm \left( D_s \sqrt{2} + 2 c \gamma^{-1} \right)^2}{\epsilon} \quad 
\text{iterations.}
$$
\end{theorem}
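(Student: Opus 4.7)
The plan is to run a standard ``function-value progress implies iteration complexity'' argument, splitting iterations by the magnitude of the PGD progress $\delta_L(s^k,\theta^k)$ and using three ingredients already assembled in the paper: (i) the per-step guarantee $\psi_0(s^{k+1},\theta^{k+1}) \le \psi_0(s^k,\theta^k) - \delta_L(s^k,\theta^k)$ built into line~\ref{algline:optstep} of the algorithm; (ii) the valid duality bound $f - \Delta(s,\theta) \le \fStar$ from \eqref{eq:valid-duality-gap}, which combined with Theorem~\ref{lem:equivalence} and Assumption~\ref{assume:inductive} yields $\psi_0(s^k,\theta^k) \ge \fStar$ along the trajectory; and (iii) Lemma~\ref{lem:progress-relationship-to-gap}, which under the hypothesis $\mathcal{K}_\gamma(s^k,\theta^k) = \emptyset$ turns small PGD progress into a small duality gap.

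First I would note that at every iteration $k$ that fails the termination check we have $\Delta(s^k,\theta^k) > \epsilon$. Telescoping (i) gives
$$
\sum_{k=1}^{K} \delta_L(s^k,\theta^k) \;\le\; \psi_0(s^1,\theta^1) - \psi_0(s^{K+1},\theta^{K+1}) \;\le\; \psi_0(s^1,\theta^1) - \fStar \;\le\; \Delta(s^1,\theta^1),
$$
where the last inequality applies (ii) at the initial iterate.

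Next I would partition the non-terminating iterations into two groups according to the precondition of Lemma~\ref{lem:progress-relationship-to-gap}. In the ``large-progress'' group $\delta_L(s^k,\theta^k) > L/2$, each iteration consumes more than $L/2$ of the budget $\Delta(s^1,\theta^1)$, so the group has at most $2\Delta(s^1,\theta^1)/L$ iterations (the second term in the claimed bound). In the ``small-progress'' group $\delta_L(s^k,\theta^k) \le L/2$ the hypotheses of Lemma~\ref{lem:progress-relationship-to-gap} are satisfied (the nondegeneracy hypothesis $\mathcal{K}_\gamma(s^k,\theta^k)=\emptyset$ is assumed for every iterate), so
$$
\delta_L(s^k,\theta^k) \;\ge\; \frac{\Delta(s^k,\theta^k)^2}{L(D_s\sqrt{2}+2c\gamma^{-1})^2} \;>\; \frac{\epsilon^2}{L(D_s\sqrt{2}+2c\gamma^{-1})^2}.
$$
Dividing the remaining progress budget by this uniform per-iteration lower bound caps the size of the small-progress group, and adding one for the terminating check assembles the total.

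The main obstacle I foresee is bookkeeping rather than any new mathematics: I need to confirm that $\psi_0(s^k,\theta^k)\ge \fStar$ along the trajectory (which uses Theorem~\ref{lem:equivalence} and hence Assumption~\ref{assume:inductive} to rule out the pathology of Figure~\ref{fig:infeasibility-example}) and that the preconditions of Lemma~\ref{lem:progress-relationship-to-gap} are indeed exactly what cleanly delimits the second group. A minor matter is reconciling the constant in the counting of the small-progress group with the form stated in the theorem; once the two groups above are bounded separately and added, the counting step is elementary.
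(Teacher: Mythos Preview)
Your three ingredients and the split into large-progress ($\delta_L>L/2$) and small-progress ($\delta_L\le L/2$) iterations match the paper exactly, and your bound of $2\Delta(s^1,\theta^1)/L$ on the large-progress group is correct. The gap is in the small-progress group, and it is not merely a constant: your ``budget divided by uniform lower bound'' count yields at most
\[
\frac{\Delta(s^1,\theta^1)}{\epsilon^2/\zeta_2}\;=\;\frac{\Delta(s^1,\theta^1)\,L\bigl(D_s\sqrt2+2c\gamma^{-1}\bigr)^2}{\epsilon^2}
\]
small-progress iterations, which carries an extra factor $\Delta(s^1,\theta^1)/\epsilon$ compared with the theorem's third term. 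Calling this ``reconciling the constant'' is incorrect; the $\epsilon$-dependence is off by one power.

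The paper closes this gap by not discarding the iterate-dependent inequality $\delta_L(s^k,\theta^k)\ge\Delta(s^k,\theta^k)^2/\zeta_2$ in favor of the uniform $\epsilon^2/\zeta_2$. Instead it passes to primal suboptimality: writing $v^k=f(s^k,z^k)-f_\ast\le\Delta(s^k,\theta^k)$ and using $\delta_L\le v^k-v^{k+1}$ gives $(v^k)^2\le\zeta_2(v^k-v^{k+1})$ on small-progress steps, which rearranges to $1/v^{k+1}-1/v^k\ge1/\zeta_2$. Telescoping this reciprocal recursion (the classical argument for gradient descent on convex functions, packaged in the paper as Lemma~\ref{lem:generic-gradient-descent-proof}) yields $v^K\le\zeta_2/(K-\zeta_3)$ and hence the stated $O(1/\epsilon)$ iteration count. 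Your telescoping of $\sum_k\delta_L$ to get a total budget is the right first move, but to recover the claimed rate you must exploit the quadratic relation $v^2\lesssim v-v^+$ via the reciprocal trick rather than a uniform per-step lower bound.
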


See Section~\ref{sec:simple-psi-minimization-proof} for a proof of Theorem~\ref{thm:simple-psi-minimization}. The proof of Theorem~\ref{thm:simple-psi-minimization} directly utilizes Lemma~\ref{lem:progress-relationship-to-gap} using standard techniques, almost identical to the proof of convergence for gradient descent in the convex setting \cite[Theorem~2.1.13]{nesterov2013introductory}.

\begin{remark}\label{remark:smoothness}
It is worth discussing the premise in Theorem~\ref{thm:simple-psi-minimization} that $\psi_0$ is $L$-smooth.
The composition of smooth functions is smooth, implying $\psi_0$ is smooth.   Moreover, since $S \times [0,1]^{n}$ is a bounded set we deduce that $\psi_0$ is $L$-smooth for some $L > 0$. Therefore the premise that $\psi_0$ is $L$-smooth is valid.
However, the value of $L$ could be extremely large, for example, if $\eta_i(s, z_{1:i-1}) = \mu_i(s,z_{1:i-1}) = 2 z_{i-1}$ for $i > 1$, $\eta_1(s) = \mu_1(s) = s_1$, and $f(s,z) = \frac{1}{2} z_n^2$ then
$\psi_0(s,\theta) = \frac{1}{2} (2^n s)^2$ and $L = 4^{n}$. Note this occurs despite the fact that each component function is well-behaved (i.e., $\eta_i, \mu_i$, $f$ are $1$-smooth and $2$-Lipschitz with respect to the Euclidean norm).
\end{remark}

\begin{remark}
Consider \eqref{eq:hard-first-order-method-instance}, the hard example for standard first-order methods. Note that starting from the origin (i.e., $x_1 = 0$, $\theta = \mathbf{0}$), then for sufficiently large step size PGD on $\psi_0$ will take exactly one iteration to find the optimal solution ($x_1 = 1, \theta = \mathbf{1}$).
\end{remark}

\begin{remark}
Suppose that we are solving a neural network verification problem (Section~\ref{sec:experiements} and ~\ref{sec:certification-of-neural-networks}). 
Then this approach is strongly related to adversarial attack heuristics.
In particular, freezing $\theta = \mathbf{0}$ in \callSimplePsiMin{} yields a typical gradient based attack on the network \cite{szegedy2013intriguing}.
\end{remark}

\subsection{Analysis of degenerate local optima}\label{sec:fixing-the-degenerate-case}

 Section~\ref{sec:FO-global-in-nondegenerate-case} proved convergence of PGD to the global minimizer under a nondegeneracy assumption (i.e., $\mathcal{K}_{\gamma}(s^k, \theta^k) = \emptyset$). 
This section develops a variant of PGD that requires no degeneracy assumptions but still converges to the global minimizer.

\subsubsection{Escaping exact local minimizers}

Our main result, presented in Section~\ref{subset:escaping-basins-of-local-minimizers}, proves convergence under minimal assumptions. The key to the result is developing an algorithm for escaping basins of local minimizers. However, the algorithm and analysis is very technical. To give intuition for it this section considers the easier case of escaping \emph{exact} local minimizers (Lemma~\ref{lem:Escaping-Exact-Local-Minimizer}). 

The high level idea is illustrated in Figure~\ref{fig:escaping-exact-local-minimizers}. Recall from Figure~\ref{fig:degenerancy-psi-example} that if we are at a spurious local minimizer then the set $\mathcal{K}_{\gamma}(s,\theta)$ must be nonempty. In particular, in this instance the set $\mathcal{K}_0(s,\theta) = \{1\}$ is nonempty. In this setting, $\theta_1$ corresponds to an edge that we can move along where $\psi_0(s,\theta)$ is constant. \callEscapeExactLocalMin{$s,\theta
$} moves us along this edge from $(s,\theta)$ to $(s,\hat{\theta})$ at which $\mathcal{K}_0(s,\hat{\theta})$ is empty and therefore we have escaped the local minimizer.

\begin{figure}[htbp]
\centering
\begin{subfigure}[b]{.5\textwidth}
  \centering
  \begin{algorithmic}[1]
\Function{\stateEscapeExactLocalMin}{$s, \theta$}
\State $z = \Forward{s, \theta}$, $\hat{\theta} \gets \text{copy}(\theta)$
\For{$i = n, \dots, 1$}
\If{$i \in \mathcal{K}_0(s,\theta_{1:i}, \hat{\theta}_{i+1:n})$}
\State $
\hat{\theta}_i = \begin{cases}
0 & \frac{\partial \psi_i(s, z_{1:i}, \hat{\theta}_{i+1:n})}{\partial z_i} > 0 \\
1 & \frac{\partial \psi_i(s, z_{1:i}, \hat{\theta}_{i+1:n})}{\partial z_i} < 0
\end{cases}
$
\EndIf
\EndFor
\State \Return $(s,\hat{\theta})$
\EndFunction
\end{algorithmic}
  \caption{Algorithm}
  \label{alg:escape-exact-local-min}
\end{subfigure}%
\begin{subfigure}[b]{.5\textwidth}
  \centering
  \includegraphics[width=0.9\linewidth]{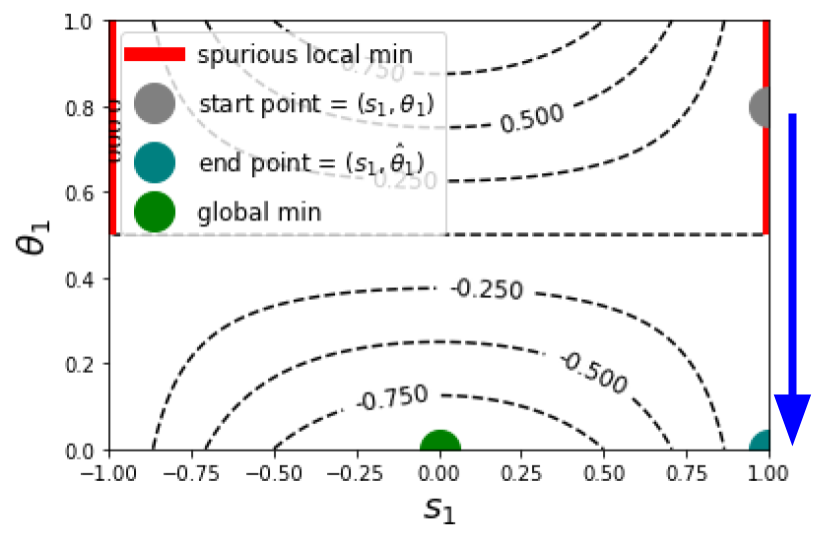}
  \caption{The high level idea of the algorithm is shown by the blue arrow.}
  \label{fig:escaping-exact-local-minimizers}
\end{subfigure}
\caption{An algorithm for escaping exact local minimizers}
\end{figure}

\begin{lemma}[Escaping exact local minimizers]
\label{lem:Escaping-Exact-Local-Minimizer}
Suppose that Assumption~\ref{assume:smooth} holds and let $(s,\hat{\theta}) = \callEscapeExactLocalMin{s, \theta}$.
Then
$\Forward{s,\theta} = \Forward{s, \hat{\theta}}$, and $\mathcal{K}_0(s,\hat{\theta}) = \emptyset$.
\end{lemma}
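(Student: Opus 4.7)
The plan is to prove the two conclusions of the lemma in sequence, leveraging the fact that \stateEscapeExactLocalMin{} sweeps indices from $n$ down to $1$ and only modifies $\hat\theta_i$ at indices where the degeneracy condition $\eta_i - \mu_i \le 0$ is triggered.

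First I would establish $\Forward{s,\theta} = \Forward{s,\hat\theta}$ by induction on $i = 1, \dots, n$. Let $z := \Forward{s,\theta}$ and $\hat z := \Forward{s,\hat\theta}$, and assume inductively $z_{1:i-1} = \hat z_{1:i-1}$. When $\hat\theta_i$ was not modified, $\hat z_i = z_i$ is immediate from the forward recursion. When $\hat\theta_i$ was modified, the triggering check was $i \in \mathcal{K}_0(s,\theta_{1:i},\hat\theta_{i+1:n})$, so the corresponding forward pass $\tilde z := \Forward{s,(\theta_{1:i},\hat\theta_{i+1:n})}$ satisfies $\eta_i(s,\tilde z_{1:i-1}) - \mu_i(s,\tilde z_{1:i-1}) \le 0$. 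Because $\tilde z_j$ depends only on $\theta_{1:j}$ for $j \le i-1$, we have $\tilde z_{1:i-1} = z_{1:i-1} = \hat z_{1:i-1}$, and hence $\eta_i(s,\hat z_{1:i-1}) = \mu_i(s,\hat z_{1:i-1})$ (using the standing $\mu_i \le \eta_i$). The convex combination $(1-\hat\theta_i)\mu_i + \hat\theta_i \eta_i$ is therefore independent of $\hat\theta_i$, giving $\hat z_i = z_i$ and closing the induction.

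Second, I would show $\mathcal{K}_0(s,\hat\theta) = \emptyset$. Fix an index $i$. Since $\hat z = z$ from the first part and $\hat\theta_{i+1:n}$ coincides with the value used inside the algorithm's iteration $i$, the gradient $\hat y_i := \partial \psi_i(s,\hat z_{1:i},\hat\theta_{i+1:n})/\partial z_i$ equals the value $y_i$ computed there. If $\eta_i(s,\hat z_{1:i-1}) - \mu_i(s,\hat z_{1:i-1}) > 0$ then $i \notin \mathcal{K}_0(s,\hat\theta)$ trivially. Otherwise, split into two subcases: (a) if the algorithm's check placed $i$ in $\mathcal{K}_0(s,\theta_{1:i},\hat\theta_{i+1:n})$, the assignment rule sets $\hat\theta_i = 0$ when $y_i > 0$ (so $\hat\theta_i \hat y_i^+ + (1-\hat\theta_i) \hat y_i^- = 0$) and $\hat\theta_i = 1$ when $y_i < 0$ (again yielding $0$), while $y_i = 0$ makes the expression vanish regardless; (b) if the check failed, then $\hat\theta_i = \theta_i$ and $\theta_i y_i^+ + (1-\theta_i) y_i^- \le 0$ already holds. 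In all cases the strict-positivity clause of $\mathcal{K}_0(s,\hat\theta)$ fails at index $i$, so $i \notin \mathcal{K}_0(s,\hat\theta)$.

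The main obstacle will be the bookkeeping among the various hybrid $\theta$ vectors $(\theta_{1:i},\hat\theta_{i+1:n})$ and verifying that the forward values and the one-sided partials used inside iteration $i$ of the algorithm agree with those that appear in the definition of $\mathcal{K}_0(s,\hat\theta)$. Once one observes that $z_j$ depends only on $\theta_{1:j}$ and that the gradient $\partial\psi_i/\partial z_i$ (obtained by the backward recursion in~\eqref{eq:grad-psi-z}) depends only on $z_{1:i}$ and $\theta_{i+1:n}$, these identifications are immediate and the remainder of the argument is the finite sign case analysis sketched above.
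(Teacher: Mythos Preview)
Your proposal is correct and follows essentially the same approach as the paper: both proofs hinge on the observation that $i\in\mathcal{K}_0$ forces $\eta_i=\mu_i$ (so changing $\theta_i$ leaves $z$ unchanged), and both then verify that the algorithm's assignment rule kills the strict-positivity clause of $\mathcal{K}_0$ at each index. Your forward induction showing $\hat z_{1:i}=z_{1:i}$ is a mild reorganization of the paper's chain $\Forward{s,\theta_{1:i-1},\hat\theta_{i:n}}=\Forward{s,\theta_{1:i},\hat\theta_{i+1:n}}$, and your direct index-by-index argument for $\mathcal{K}_0(s,\hat\theta)=\emptyset$ replaces the paper's nested ``$\mathcal{K}_0(s,\theta_{1:i-1},\hat\theta_{i:n})\subseteq\{1,\dots,i-1\}$'' induction, but the content is the same. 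One small remark: your appeal to ``the standing $\mu_i\le\eta_i$'' to upgrade $\eta_i-\mu_i\le 0$ to equality is not literally part of Assumption~\ref{assume:smooth}; the paper glosses over this too, implicitly relying on Assumption~\ref{assume:inductive} (or the corresponding fact stated at the start of the appendix).
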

\begin{proof}
By the definition of $\mathcal{K}_0$, if $i \in \mathcal{K}_0(s,\theta)$ then $\eta_i = \mu_i$. Therefore
$\Forward{s, \theta_{1:i-1}, \hat{\theta}_{i:n}} = \Forward{s, \theta_{1:i}, \hat{\theta}_{i+1:n}}$, and by induction $ \Forward{s,\theta} = \Forward{s, \hat{\theta}}$.

Next, we show that $i \not\in \mathcal{K}_0(s, \theta_{1:i-1},\hat{\theta}_{i:n})$. If $i \not\in \mathcal{K}_0(s, \theta_{1:i},\hat{\theta}_{i+1:n})$ then $\theta_i = \hat{\theta}_i$ so the result trivially holds. On the other hand, if $i \in \mathcal{K}_0(s, \theta_{1:i},\hat{\theta}_{i+1:n})$ then by definition of $\hat{\theta}_i$,
\begin{flalign}\label{eq:theta-hat-psi-is-zero}
\hat{\theta}_i  \left( \frac{\partial \psi_i(s, z_{1:i}, \hat{\theta}_{i+1:n})}{\partial z_i} \right)^{+} + (1 - \hat{\theta}_i) \left( \frac{\partial \psi_i(s, z_{1:i}, \hat{\theta}_{i+1:n})}{\partial z_i}\right)^{-} = 0
\end{flalign}
which implies $i \not\in \mathcal{K}_0(s, \theta_{1:i-1},\hat{\theta}_{i:n})$. Further note that $\Forward{s, \theta_{1:i-1}, \hat{\theta}_{i:n}} = \Forward{s, \theta_{1:i}, \hat{\theta}_{i+1:n}}$
 implies if $j \le i$ and $j \not\in \mathcal{K}_0(s, \theta_{1:j-1},\hat{\theta}_{j:n})$ then $j \not\in \mathcal{K}_0(s, \theta_{1:i-1},\hat{\theta}_{i:n})$. By induction we deduce $\mathcal{K}_0(s, \theta_{1:i-1},\hat{\theta}_{i:n}) \subseteq \{ 1, \dots, i - 1 \}$ and hence $\mathcal{K}_0(s, \hat{\theta})$ is empty.
\end{proof}

A critical feature of \callEscapeExactLocalMin{$s,\theta
$} is that we work backwards (i.e., $i = n, \dots, 1$ rather than $i = 1, \dots, n$). This is critical because if we work forwards instead of backwards then \eqref{eq:theta-hat-psi-is-zero} would become
$$\hat{\theta}_i  \left( \frac{\partial \psi_i(s, z_{1:i}, \theta_{i+1:n})}{\partial z_i} \right)^{+} + (1 - \hat{\theta}_i) \left( \frac{\partial \psi_i(s, z_{1:i}, \theta_{i+1:n})}{\partial z_i}\right)^{-} = 0$$
which, due to the replacement of $\theta$ with $\hat{\theta}$ inside $\psi_i$, is insufficient to establish $\mathcal{K}_0(s,\hat{\theta})$ is empty.

Finally, we remark that $g_i := \frac{\partial \psi_i(s, z_{1:i}, \hat{\theta}_{i+1:n})}{\partial z_i}$ can be computed via the recursion
$$
g_i \gets \frac{\partial f}{\partial z_{i}} + \sum_{j = i + 1}^{n} g_{j} \left( \hat{\theta}_{j} \frac{\partial \eta_{j}}{\partial z_i} + ( 1 - \hat{\theta}_{j}) \frac{\partial \mu_{j}}{\partial z_i} \right),
$$
and therefore calling \callEscapeExactLocalMin{} takes the same time as computing $\grad_{\theta} \psi_0$.

\subsubsection{Escaping the basin of a local minimizer}\label{subset:escaping-basins-of-local-minimizers}

If we modify \callSimplePsiMin{} to run \callEscapeExactLocalMin{$s,\theta
$} whenever the set $\mathcal{K}_{0}(s^k,\theta^k)$ is nonempty then we would escape exact local minimizers. However, that does not exclude the possibility of asymptotically converging to a local minimizer. Therefore we need a method that will escape the basin of a local minimizer. In particular, we must be able to change the value of the $\theta_i$ variables with $i \in \mathcal{K}_{\gamma}(s,\theta)$ for $\gamma > 0$. This, however, introduces technical complications because if $\eta_i > \mu_i$ then as we change $\theta_i$ the value of $z_{i:n}$ could change.

Due to these technical complications we defer the algorithm and analysis to Appendix~\ref{app:escaping-the-basin-of-a-local-minimizer}, and informally state the main result here. The proof of Theorem~\ref{thm:general-convergence-result} appears in Appendix~\ref{app:proof-of-general-convergence-result}. The discussion given in Remark~\ref{remark:smoothness} also applies to Theorem~\ref{thm:general-convergence-result} and means that the constant $C$ could be large.

\begin{theorem}
\label{thm:general-convergence-result}
Suppose that Assumptions~\ref{assume:smooth}, \ref{assume:bounded-set}, and \ref{assume:inductive} hold. Then there exists an algorithm obtaining an $\epsilon$-duality gap after $C \epsilon^{-3} + 1$ computations of $\grad \psi_0$ where $C$ is a problem dependent constant.
\end{theorem}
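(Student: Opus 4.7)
The plan is to run the nondegenerate PGD procedure \callSimplePsiMin{} with a carefully chosen parameter $\gamma > 0$, and to intercept any iterate at which $\mathcal{K}_\gamma(s^k,\theta^k) \ne \emptyset$ by invoking a generalization \callFixDeg{} of \callEscapeExactLocalMin{}. Concretely the master algorithm reads: at iterate $(s^k,\theta^k)$, if $\Delta(s^k,\theta^k) \le \epsilon$ return; else if $\mathcal{K}_\gamma(s^k,\theta^k) \ne \emptyset$ set $(s^{k+1},\theta^{k+1}) \gets \callFixDeg{s^k,\theta^k}$; otherwise take a PGD-style step that reduces $\psi_0$ by at least $\delta_L$, exactly as on line~\ref{algline:optstep} of Algorithm~\ref{alg:PGD}. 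The parameter $\gamma$ will ultimately be balanced at $\gamma = \Theta(\epsilon^{1/2})$.

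The analysis splits into two counts: PGD steps and \callFixDeg{} invocations. Between successive \callFixDeg{} calls the whole PGD segment satisfies the nondegeneracy premise of Lemma~\ref{lem:progress-relationship-to-gap}, so the iteration count in that segment is controlled by Theorem~\ref{thm:simple-psi-minimization}, namely at most $O\!\left(L(D_s\sqrt 2+2c\gamma^{-1})^2/\epsilon\right)$ steps, which under $\gamma = \Theta(\epsilon^{1/2})$ simplifies to $O(\epsilon^{-2})$. I would then design \callFixDeg{} so that each call either (a) strictly decreases $\psi_0$ by at least $\Omega(\gamma^2) = \Omega(\epsilon)$, or (b) leaves $\psi_0$ unchanged while strictly decreasing a combinatorial potential (for instance, the number of degenerate indices at which $\theta_i$ is fractional). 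Assumptions~\ref{assume:smooth}--\ref{assume:bounded-set} bound the range of $\psi_0$ by a problem-dependent constant, so type-(a) calls occur at most $O(\epsilon^{-1})$ times in total, while consecutive type-(b) calls are bounded by a finite combinatorial quantity (at most $n$) between PGD segments. Summing yields $O(\epsilon^{-1})$ escape phases, each followed by at most $O(\epsilon^{-2})$ PGD iterations, for a total of $C\epsilon^{-3}+1$ evaluations of $\grad\psi_0$ as claimed.

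The main obstacle is constructing \callFixDeg{} in the \emph{approximate} degeneracy regime. In the exact case handled by \callEscapeExactLocalMin{} (Lemma~\ref{lem:Escaping-Exact-Local-Minimizer}), $\eta_i = \mu_i$ means snapping $\theta_i$ to $0$ or $1$ leaves $z$ and hence $\psi_0$ unchanged, so the sign of $\partial \psi_i/\partial z_i$ alone determines a safe descent direction. When $0 < \eta_i-\mu_i \le \gamma$, snapping $\theta_i$ propagates an $O(\gamma)$ perturbation through the forward pass and through every downstream $\psi_j$. The construction must process coordinates backwards ($i = n, \dots, 1$, as in Lemma~\ref{lem:Escaping-Exact-Local-Minimizer}, since then the $\theta_{i+1:n}$ used to compute $\partial \psi_i/\partial z_i$ are already the updated values) and choose the update rule so that the first-order gain $|\theta_i y_i^{+}+(1-\theta_i)y_i^{-}|(\eta_i-\mu_i)$ from the $\theta_i$ move dominates the quadratic-in-$\gamma$ error coming from the smoothness of $\psi_i$ in $z$. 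Quantifying these two quantities with the smoothness and diameter constants, and thereby delivering the $\Omega(\gamma^2)$ decrease required for the type-(a) count above, is the technical heart of the argument and is deferred to Appendix~\ref{app:escaping-the-basin-of-a-local-minimizer}.

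Once the decrease guarantee for \callFixDeg{} is established, the remainder is a routine potential and bookkeeping argument: one combines the PGD iteration bound of Theorem~\ref{thm:simple-psi-minimization} in each nondegenerate segment with the escape count to obtain the stated $C\epsilon^{-3}+1$ bound, where $C$ absorbs $L$, $D_s$, $D_z$, $c$, and the range of $\psi_0$ on $S\times[0,1]^n$.
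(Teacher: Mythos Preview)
Your proposal takes a different route from the paper and contains a genuine gap. The paper does not fix $\gamma$ and count PGD segments between escape calls. It instead replaces $\mathcal{K}_\gamma$ by the set $\mathcal{C}(s,\theta,q) = \{i: r_i > 2q(\eta_i-\mu_i)\}$ with an adaptive scale $q$, and combines one \callFixDeg{} call and one PGD step into a single outer iteration whose total progress is $\tau = v + \delta_L$. It then proves directly (Lemma~\ref{lem:Delta-s-theta-bound}) that $\Delta^4 \le \text{const}\cdot\tau$ at every such iteration, and applies Lemma~\ref{lem:generic-gradient-descent-proof} with $\zeta_1=3$ to obtain the $\epsilon^{-3}$ rate without any segment accounting.

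The gap in your plan is the claimed dichotomy for \callFixDeg{}. Membership $i\in\mathcal{K}_\gamma$ only says $\eta_i-\mu_i\le\gamma$ and $r_i:=\theta_i y_i^{+}+(1-\theta_i)y_i^{-}>0$, with no positive lower bound on $r_i$. The first-order gain from snapping $\theta_i$ is $r_i(\eta_i-\mu_i)$, which can be arbitrarily small, so it need not dominate the $O(\gamma^2)$ propagation error; hence you cannot extract an $\Omega(\gamma^2)$ decrease in case~(a), and in case~(b) $\psi_0$ does not stay exactly unchanged since $\eta_i>\mu_i$. Moreover, after snapping, the perturbation to $z$ alters $\partial\psi_j/\partial z_j$ at other indices, so you cannot conclude $\mathcal{K}_\gamma(s,\hat\theta)=\emptyset$, which you need to invoke Lemma~\ref{lem:progress-relationship-to-gap} in the next PGD segment. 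The paper sidesteps both difficulties: the threshold in $\mathcal{C}$ is proportional to $\eta_i-\mu_i$, so the gain $p_i(\eta_i-\mu_i)>2q(\eta_i-\mu_i)^2$ automatically controls the quadratic error, and the analysis never requires the degeneracy set to be empty after the fix---it only needs the post-fix bound $\hat r_i\le q\bigl((3+P)\omega_0+\hat\eta_i-\hat\mu_i\bigr)$ of Lemma~\ref{lem:bound-complementarity-after-fix-deg}, which feeds directly into the $\Delta^4$ inequality.
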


\section{Experiments}\label{sec:experiements}

We evaluate our method on robustness verification of models trained on CIFAR10 \cite{krizhevsky2009learning}. We benchmark on three sizes of networks trained with adversarial training~\citep{madry2018towards}. The tiny network has two fully connected layers, with 100 units in the hidden layer. The small network has two convolutional layers, and two fully connected layers, with a total of 8308 hidden units. The medium network has four convolutional layers followed by three fully connected layers, with a total of 46912 hidden units. 
\begin{table}[b]
\vspace{3pt}
\floatbox[{\capbeside\thisfloatsetup{floatwidth=sidefil,capbesideposition={right,top},capbesidewidth=.31\linewidth}}]{table}
{\small \caption{\textbf{Benchmark}\\ For each model, we report the average bound achieved on the adversarial objective and the average runtime in milliseconds to obtain it, over the CIFAR-10 test set. IBP \citep{Gowal2019} does not perform any optimization so it has an extremely small runtime but the bounds it generates are much weaker.
The off-the-shelf solvers are significantly slower than the first-order methods DeepVerify and NonConvex and were not feasible to run beyond the tiny network.}\label{tab:benchmark}}%
{\small\addtolength{\tabcolsep}{-2.2pt}\raggedright
\begin{tabular}{l@{\hskip 0in}ccc@{\hskip 0in}ccc}
\toprule
  \textbf{ReLU Activation} & \multicolumn{3}{c}{\textbf{Average Bound}} &\multicolumn{3}{c}{\textbf{Runtime (ms)}}\\
  \cmidrule(r{1em}){2-4} \cmidrule{5-7}
   & Tiny & Small & Medium & Tiny & Small & Medium\\
   \midrule
   IBP~\citep{Gowal2019} & 17.0 & 743 & 2.4e+6 & 5.5 & 3.1 & 3.3 \\
   \midrule
   DeepVerify~\citep{Dvijotham2018} & 13.7 & 544 & 1.6e+6 & 349 & 711 & 1.1e+3 \\
   NonConvex (Ours) & 5.68 & 434.9 & 1.5e+6 & 91.2 & 177 & 175 \\
   \midrule
   CVXPY (SCS)~\citep{Odonoghue2016} & 5.64 & - & - & 1.7e+5 & - & -\\
   CVXPY (ECOS)~\citep{domahidi2013ecos} & 5.64 & - & - & 4.3e+4 & - & -\\
  \bottomrule
\toprule
  \textbf{SoftPlus Activation} & \multicolumn{3}{c}{\textbf{Average Bound}} &\multicolumn{3}{c}{\textbf{Runtime (ms)}}\\
  \cmidrule(r{1em}){2-4} \cmidrule{5-7}
   & Tiny & Small & Medium & Tiny & Small & Medium\\
   \midrule
   IBP~\citep{Gowal2019} & 18.3 & 6.5e+3 & 2.0e+9 & 4 & 2.5 & 3.3 \\
   \midrule
   DeepVerify~\citep{Dvijotham2018} & 13.7 & 5.1e+3 & 1.5e+9 & 414 & 855 & 1.7e+3\\
   NonConvex (Ours) & 5.97 & 3.93e+3 & 1.3e+9 & 7.8 & 65 & 214\\
   \midrule
   CVXPY (SCS)~\citep{Odonoghue2016} & 5.97 & - & - & 2.9e+5 & - & -\\
  \bottomrule
\end{tabular}}
\end{table}

\begin{figure}
    \begin{subfigure}{.62\textwidth}
        \includegraphics[width=\linewidth]{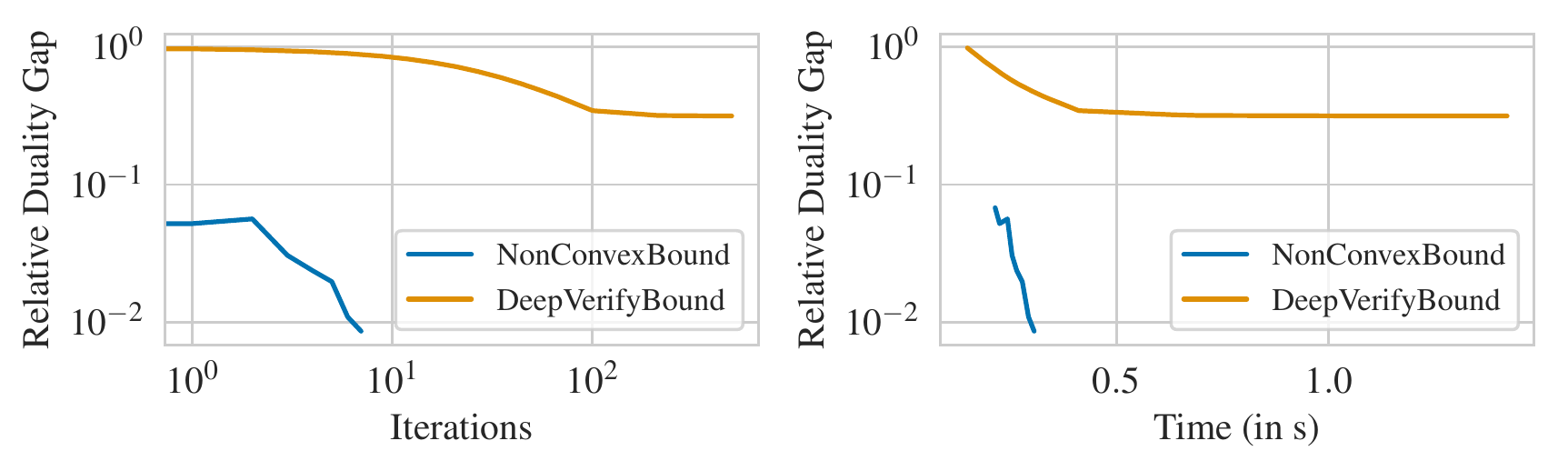}%
        \caption{Evolution of the relative duality gap as a function of time or number of iteration, for the NonConvex and DeepVerify Solver.}
        \label{fig:opt_trace}
    \end{subfigure}%
    \hfill%
    \begin{subfigure}{.31\textwidth}
        \centering
        \includegraphics[width=\linewidth]{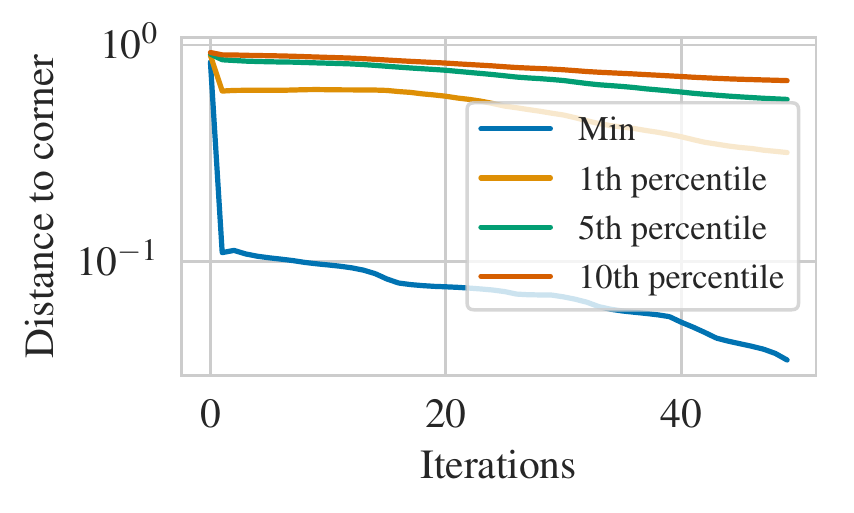}
        \caption{Distribution of distance to potentially degenerate points.}
        \label{fig:distance_to_corner}      
    \end{subfigure}%
    \hfill %
    \caption{Evaluation on the Medium-sized network with SoftPlus activation function }
\end{figure}

\begin{table}
\floatbox[{\capbeside\thisfloatsetup{floatwidth=sidefil,capbesideposition={right,top},capbesidewidth=.32\linewidth}}]{table}
{}%
{
}
\end{table}

Verification of these networks is relaxed to a stage-wise convex problem (Appendix~\ref{sec:certification-of-neural-networks}).
We compare three strategies for solving this relaxation: 
(i) NonConvex, our nonconvex reformulation using \callSimplePsiMin{} augmented with momentum and backtracking linesearch,
(ii) DeepVerify~\citep{Dvijotham2018} (DV) that performs Lagrangian relaxation on the bound computation problem,
(iii) a direct encoding of the relaxation into CVXPY~\citep{cvxpy}, with  SCS~\citep{Odonoghue2016} and ECOS~\citep{domahidi2013ecos} backends\footnote{We also ran tests on an internal primal-dual hybrid gradient implementation. It was not remotely competitive (failing to converge after 100,000 iterations on trialed instances) so we did not include it in the results.}.
We terminate (i) after 50 iteration or when the relative duality gap is less than $10^{-2}$, (ii)
after 500 iterations or when its dual value is larger than the final value of NonConvex (NC) (details in Appendix~\ref{app:implementation-details}).


Table~\ref{tab:benchmark} shows that, compared with the specialized first-order method DV, our method is faster by a factor between 3 and 50 depending on the network architecture, and always produces tighter bounds. As the two methods solve problems that have the same optimal value, we hypothesize that the discrepancy is because the Lagrangian relaxation of DV contains an inner-maximization problem that makes its objective extremely non-smooth, slowing convergence. 

In most problems, DV reaches the imposed iterations limit before convergence. This is quantified in Table~\ref{tab:convergence_prop} where we show that beyond the tiny network, DV does not reach a small enough dual gap to achieve early stopping. On the other hand, we observe that for NC, the scale of the network does not significantly impact the required number of iterations. Figure~\ref{fig:opt_trace} shows an example of the evolution of the computed bound, where we can see that the objective of DV plateaus, while NC converges in few iterations. Since the time per iteration for both methods is roughly the same, our runtime is lower.

After a single iteration, the duality gap achieved by our method is considerably smaller. The variables of DV exist on an unbounded feasible domain and appropriate initial values are therefore difficult to estimate, leading to large initial duality gap. Our method does not suffer from this problem, as all our variables are constrained between 0 and 1, and we can therefore initialize them all to 0.5, which empirically gives us good performance.

\begin{wraptable}[13]{r}{.5\linewidth}
    \vspace{-10pt}
    \caption{\small Proportion of bound computations on CIFAR-10 where the algorithm converges within the iteration budget, and average number of iterations.}\label{tab:convergence_prop}
    \small\addtolength{\tabcolsep}{-5pt}
    \begin{tabular}{lcc@{\hskip 0.2in}cc}
            \toprule
            & \multicolumn{2}{l}{\bf Early stopping \%} & \multicolumn{2}{c}{\bf Avg iteration count}\\
            \cmidrule(r{1em}){2-3} \cmidrule{4-5}

            & DV & NC & DV & NC\\ 
            \midrule
            Tiny ReLU & 37\% & 73\% & 384 & 18 \\
            Small ReLU & 0\% & 97\% & 500 & 9 \\
            Medium ReLU & 63\% & 100\%& 284 & 5 \\
            \midrule
            Tiny SoftPlus & 14 \% & 100\%& 467 & 4\\
            Small SoftPlus & 0 \% & 100\%& 500 & 7\\
            Medium SoftPlus & 0 \% & 59\%& 500& 25\\
            \bottomrule
   \end{tabular}
\end{wraptable}

\noindent {\bf Nondegeneracy in practice.} In Section~\ref{sec:FO-global-in-nondegenerate-case}, we described a simple version of our algorithm under the assumption that the algorithm does not enter a degenerate region.
In the context of Neural Network verification, due to the structure of the problem, the only possibility for a small gap between $\eta_i - \mu_i$ is at the boundary of the feasible domain of the convex hull relaxation of activation.
Even points close to the corner are not necessarily degenerate as they also need to satisfy a condition on the gradients.
Throughout optimization, we measure $\frac{\min\{z_i - l_i, u_i - z_i \}}{u_i - l_i}$ where $l_i$ and $u_i$ are lower and upper bounds on $z_i$ (corresponding to the corners), as shown in Figure~\ref{fig:distance_to_corner}.
We can observe that this value is strictly positive for all $i$ which means we are not entering the degenerate region. This explains why, for these problems, \callSimplePsiMin{} was able to converge to good solutions.

\paragraph{Conclusion:} We have developed a novel algorithm for a class of stage-wise convex optimization problems. Our experiments showed that our algorithm is efficient at solving standard relaxations of neural network verification problems. We believe that these results will generalize to stronger relaxations \citep{anderson2020strong}, as well as other stage-wise convex problems such as those arising in optimal control and generalized isotonic regression. 

\newpage

\section*{Broader Impact}
Our work leads to new scalable algorithms for verifying properties of neural networks and solve certain kinds of structured regression problems. On the positive side, these can have an impact in terms of better methods to evaluate the reliability and trustworthiness of state of the art deep learning systems, thereby catching any unseen failure modes and preventing undesirable consequences of deep learning models. On the negative sign, the algorithms are agnostic to the type of properties being verified and may facilitate abuses by allowing attackers to verify that their attacks can reliabily induces specific failure modes in a deep learning model. Further, any applications of these techniques is reliant on carefully designing desirable specifications or properties of a deep learning model - if this is not done carefully, even systems that are verifiable with these algorithms may exhibit undesirable behavior (arising from bias in the data or the specification). 

\begin{ack}

We thank Miles Lubin for establishing the connections between the authors and helpful feedback on the paper.
We'd also like to thank Ross Anderson, Christian Tjandraatmadja, and Juan Pablo Vielma for helpful discussions.
\end{ack}

\bibliography{specific-bio.bib,generic-bio.bib}

\newpage

\appendix

\section{Examples of optimization problems with this structure}\label{appendix:examples-with-this-structure}

\subsection{Linear quadratic control and extensions}\label{sec:linear-quadratic-control}

Linear quadratic control problems \cite{lewis2012optimal} take the form:
\begin{subequations}\label{LQR}
\begin{flalign}
\minimize{x}{&\frac{1}{2}\sum_{t=1}^{M} x(t)^T Q(t) x(t) + u(t)^T R(t) u(t)} \\
x(t+1) &= 
A(t) x(t) + B(t) u(t) \\
x(0) &= x_{\text{initial}}
\end{flalign}
\end{subequations}
where $A(t), B(t) \in \R^{n \times n}$ are matrices, $Q(t) , R(t) \in \R^{n \times n}$ are symmetric positive definite matrices, and $x(t) \in \R^{n}$ represents the system state, $u(t) \in \R^{n}$ the input, the initial system state is $x_{\text{initial}} \in \R^{n}$, and the positive integer $M$ is the number of time steps.
This problem can be solved by dynamic programming using $O(M n^3)$. 

Another approach \cite{lasdon1967conjugate} is to reformulate by eliminating the $x$ variables by forward propagation, thereby rewriting the problem as
\begin{flalign}\label{eq:eliminate-u}
\minimize{u}{~h(u)}.
\end{flalign}
The gradient of \eqref{eq:eliminate-u} can be computed by backpropagation which takes time proportional to the total number of non-zeros in $A(t)$, $B(t)$, $Q(t)$, and $R(t)$. One can therefore solve this problem using gradient descent and due to the lower iteration cost, potentially find an approximate minimizer faster than using dynamic programming. The function $h(u)$ is a convex quadratic, which implies gradient descent finds the global minimizer. It is worth noting that applying our nonconvex reformulation to \eqref{LQR} by letting $s \gets u$ and $z \gets x$ yields $h(u) = \psi_0(u, \theta)$. Therefore, for linear quadratic control our approach and the approach of \citet{lasdon1967conjugate} are essentially equivalent.

However, one benefit of our approach is that we can tackle a wider range of problems than these classical methods. For example, we can support more complex dynamics where $x(t+1)$ is wedged between a convex and concave function. 

\subsection{Verification of neural networks robustness to adversarial attacks}\label{sec:certification-of-neural-networks}

To provide guarantees on the behaviour of neural networks, there has been a surge of interest in verifying that the output classification of a trained model remains stable when the input is slightly perturbed (adversarially) \citep{bunel2018unified,Dvijotham2018, Wong2018provable}.
In particular, consider an input $s_0$ with label $c^{*}$. We wish to show there exists no adversarial example close to $s_0$ such that the network outputs $c \neq c^{*}$.
Define $S$ as the restriction of the input domain over which we want to perform verification. 
In the context of robustness to adversarial attacks, this would typically correspond to $S = \left\{s \ |\  \| s - s_0 \|_\infty \leq \epsilon \right\}$.
The set of feasible activation values for a feedfoward neural network for any input $s \in S$ satisfy,
\begin{subequations}
\label{eq:feedforward-network}
\begin{flalign}
s &\in S \\
z_{1:j(1)} &= \sigma ( W_0 s + b_0) \\
z_{j(k)+1:j(k+1)} &= \sigma ( W_k z_{j(k-1)+1:j(k)} + b_k ), \quad \forall k \in \{ 1, \dots, K - 1  \},
\end{flalign}
\end{subequations}
where $k$ represents layers, $j(0), \dots, j(K)$ partitions the vector $z$ such that $z_{j(k-1)+1:j(k)}$ is the activation values for layer $k$, $W_k$ is the weight matrix for the $k$th layer, and $\sigma$ represents the activation function (e.g., ReLU).
By splitting the matrices $W_k$ into a sequence of vectors $w_i$ and the vectors $b_k$ into a sequence of numbers $h_i$ this can be rewritten in the form
\begin{subequations}
\label{eq:general-network}
\begin{flalign}
s &\in S \\
z_i &= \sigma ( [s, z_{1:i-1}] \cdot w_i + h_i ) \quad \forall i \in \{1, \dots, n \}.
\end{flalign}
\end{subequations}
To make the definitions precise,
$w_i = [\mathbf{0}, [W(k)]_{i-j(k-1)}]$ where $k$ is the unique solution to \mbox{$j(k-1) + 1 \le i \le j(k)$}, and $[W(k)]_{i-j(k-1) }$ denotes the $i-j(k-1)$th row of $W(k)$; $h_i = [b_k]_{i-j(k-1) }$. Note that \eqref{eq:general-network} is more general than \eqref{eq:feedforward-network} as it could capture more than just feedforward networks.

We now describe a procedure to verify the network.
Let $C$ be the set of possible output classes from the network and $v_{c}$ a weight vector for each class $c \in C$.
Typically neural networks classify an example according to the rule
$$
\argmax_{c \in C} {v_{c} \cdot z}.
$$
Usually, $v_c$ is a sparse vector with zeros in all entries except those corresponding to the last layer of the network.

Therefore to verify that the network will output class $c^{*}$ for all inputs in $S$ it suffices to solve
$$
\minimize{z}{~(v_{c^{*}} - v_{c}) \cdot z} 
\quad\text{subject to}\quad \eqref{eq:general-network},
$$
for each $c \in C \setminus \{ c^{*} \}$. If the minimum value of each of these subproblems is positive then the network is robust to adversarial perturbations.

Unfortunately, this problem is intractable as the feasible region given by \eqref{eq:general-network} is nonconvex, and moreover the problem is in general NP-hard \cite{katz2017reluplex}. However, this does not preclude the possibility of verifying the neural network by forming a convex relaxation of \eqref{eq:general-network}. To form this convex relaxation of \eqref{eq:general-network}, we need lower and upper bounds on the possible values for each value of $[s, z_{1:i-1}] \cdot w_i$. 
These bounds can be obtained either by optimization over the partially constructed problem or by simple bound propagation~\citep{Gowal2019}.
Let us denote these bounds by $l_i$ and $u_i$. In the case where $\sigma$ is a ReLU we define the convex relaxation in the form of \eqref{eq:convex-relaxation} with
\begin{flalign*}
f(s,z) &= (v_{c} - v_{c'}) \cdot z
\end{flalign*}
and for all $i \in \{1, \dots, n\}$,
\begin{flalign*}
\mu_i ( s, z_{1:i-1} ) &= \sigma ( [s, z_{1:i-1}] \cdot w_i + h_i ) = \max\{ [s, z_{1:i-1}] \cdot w_i + h_i, 0 \} \\
\eta_i( s, z_{1:i-1} ) &=
\begin{cases}
\frac{u_i}{u_i - l_i} ([s, z_{1:i-1}] \cdot w_i + h_i - l_i) & 0 \in [l_i, u_i] \\
[s, z_{1:i-1}] \cdot w_i + h_i & l_i \ge 0 \\
0 & u_i \le 0
\end{cases}
\label{eq:relu_cvx_hull}
\end{flalign*}
where $\mu_i$ and $\eta_i$ are depicted in Figure~\ref{compare:original-and-convex}.
Since ReLU is a convex function, this feasible region is convex. Definition of the constraints corresponding to the convex hull relaxation of different type of non-linearities have been previously published in the literature~\citep{salman2019convex,bunel2020lagrangian}.

Due to the way that the lower and upper bounds are constructed, this convex relaxation satisfies Assumption~\ref{assume:inductive}.
In particular, if we form the bounds by optimizing over the partially constructed problem, i.e.,
\begin{flalign*}
l_j &= h_j + \minimize{(s,z) \in S \times \R^{n}}{~[s, z_{1:j-1}] \cdot w_j} \quad \text{s.t.} \quad \mu_i ( s, z_{1:i-1} ) \le z_i \le \eta_i ( s, z_{1:i-1} ) \quad \forall i \in \{ 1, \dots, j - 1 \} \\
u_j &= h_j + \maximize{(s,z) \in S \times \R^{n}}{~[s, z_{1:j-1}] \cdot w_j} \quad  \text{s.t.} \quad \mu_i ( s, z_{1:i-1} ) \le z_i \le \eta_i ( s, z_{1:i-1} ) \quad \forall i \in \{ 1, \dots, j - 1 \}
\end{flalign*}
then we can see that given we have a feasible solution $[s, z_{1:j-1}]$ to $\mu_i ( s, z_{1:i-1} ) \le z_i \le \eta_i ( s, z_{1:i-1} ) ~ \forall i \in \{ 1, \dots, j - 1 \}$ then $[s, z_{1:j-1}] \cdot w_j + h_j \in [l_i, u_i]$ which implies that for $0 \not\in [l_i, u_i]$ that $\eta_j(s,z_{1:j-1})-\mu_j(s,z_{1:j-1}) \ge 0$ by definition and if $0 \in [l_i, u_i]$ then
\begin{flalign*}
\eta_j(s,z_{1:j-1})-\mu_j(s,z_{1:j-1}) &= \frac{u_i}{u_i - l_i} ([s, z_{1:i-1}] \cdot w_i - l_i) - \max\{ [s, z_{1:i-1}] \cdot w_i, 0 \} \\
&\ge \min \left\{ \frac{u_i}{u_i - l_i} (u_i -  l_i),
\frac{u_i}{u_i - l_i} (l_i - l_i)
\right\}  \\
&= 0
\end{flalign*}
as required to establish Assumption~\ref{assume:inductive} (intuition for this can be given by contrasting Figure~\ref{fig:infeasibility-example} with Figure~\ref{compare:original-and-convex}).
By a similar argument, simple bound propagation~\citep{Gowal2019} to compute $l_i$ and $u_i$ will also guarantee Assumption~\ref{assume:inductive} holds.
In general, if a relaxation is constructed in an inductive fashion Assumption~\ref{assume:inductive} tends to naturally  hold.

\begin{figure}[tbhp]
    \centering
    \includegraphics[height=100pt]{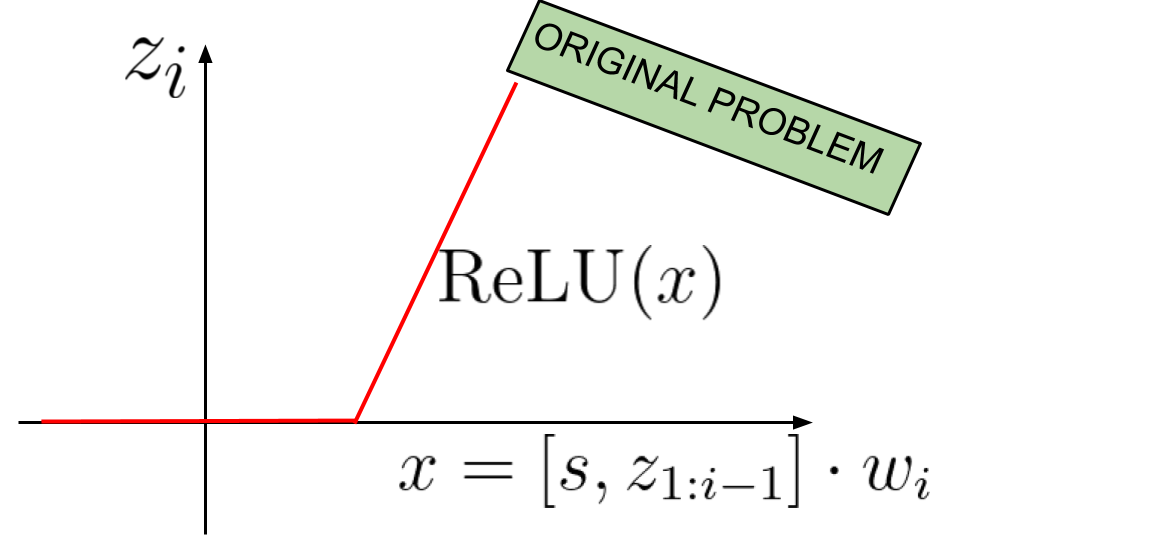}\includegraphics[height=100pt]{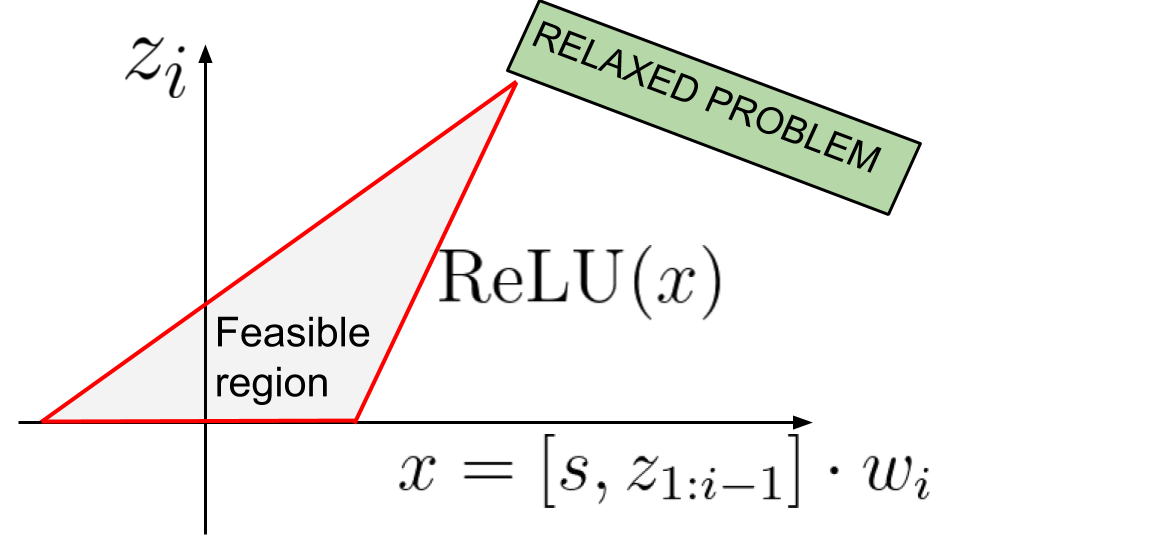}
    \caption{Comparison of original feasible region with convex relaxation when $\sigma$ is ReLU.}
    \label{compare:original-and-convex}
\end{figure}

There exists tighter linear programming bounds for neural network verification that require an (implicit) exponential number of inequalities \cite{anderson2020strong}. 
Ignoring smoothness issues, these also can be cast into our framework.
Using our approach to solve these linear programs is an interesting avenue for future work.
These types of relaxations also have applications in reinforcement learning \cite{ryu2019caql}.

\paragraph{Related work on Deep Network verification} The convex relaxation was proposed by Ehlers [Ehlers, ATVA 2017] but the high computational cost of solving it with off-the-shelf LP solvers was prohibitive for large instances. A large number of papers such as IBP \cite{gowal2018effectiveness}, DeepPoly \cite{singh2019abstract}, Crown \cite{zhang2018efficient}, Neurify \cite{wang2018efficient}, LP-relaxed-dual \cite{Wong2018provable} focused on looser
relaxations to allow for fast, closed form solutions of the bound computation problem scaling to larger networks. In parallel, work was done to reformulate the optimization problem to allow the use of better algorithms: DeepVerify
\cite{dvijotham2018dual} introduced an unconstrained dual reformulation of the non-convex problem and showed equivalence with the convex relaxation. Proximal \cite{bunel2020lagrangian} performed lagrangian decomposition and used proximal methods to solve the problem faster. The application of our method to network certification follows this research direction of speeding up the computation of network bounds without compromising on tightness.

\subsection{Finite horizon Markov decision processes with cost function controlled by an adversary}\label{finite-horizon-markov-processes}

Consider a finite horizon Markov Decision Process with uncertain rewards. In particular, at each time stage $t \in \{1, \dots, T \}$ we take an action $a \in A$ and move from state $k \in K$ to a state $k' \in K$ with probability $P_{a, k,k',t}$ and earn reward $R_{a,k,t}$. Further suppose the rewards are uncertain but we know $R \in \mathcal{R}$ where $\mathcal{R}$ is a convex set. We wish to know what the optimal policy is given we start in state $1$. This can be written as the following optimization problem
\begin{subequations}\label{eq:MDP}
\begin{flalign}
\min_{R \in \mathcal{R}, v} & \quad v_{1,1} \\
\max_{a \in A} \sum_{k' \in K} P_{a,k,k',t} v_{k',t+1} + R_{a,k,t} &\le v_{k,t}
\end{flalign}
\end{subequations}
where $v_{k,T} = 0$. 
\citet{mcmahan2003planning} develops a specialized algorithm for this problem. This algorithm is used for robots playing laser tag \cite{rosencrantz2003locating}.

We can also cast this problem in our framework. Suppose we have an upper bound on the rewards $R_{a,k,t}^{\max}$ ($R_{a,k,t}^{\max} \ge R_{a,k,t}$ for all $R \in \mathcal{R}$) then applying standard dynamic programming we can compute $v_{k,t}^{\max}$ allowing us to rewrite \eqref{eq:MDP} in the form,
\begin{flalign*}
\min_{R \in \mathcal{R}, v} & \quad v_{1,1} \\
\max_{a \in A} \sum_{k' \in K} P_{a,k,k',t} v_{k',t+1} + R_{a,k,t} &\le v_{k,t} \le v^{\max}_{k,t}.
\end{flalign*}
For this problem one can show Assumption~ \ref{assume:bounded-set} and \ref{assume:inductive} are satisfied. The only issue is that $\max_{a \in A} \sum_{k' \in S} P_{a,k,k',t} v_{k',t+1} + R_{a,k,t}$ is a nonsmooth function. Although, as we mention in Remark~\ref{remark:smoothing-issue}, this problem is likely surmountable.

\subsection{Generalized Isotonic regression}\label{generalized-isotonic-regression}


Classic isotonic regression considers the following problem
\begin{flalign}
\min_z \sum_{i=1}^n (z_i - y_i)^2 \quad \text{s.t.} \quad z_j \le z_i \quad \forall ~ j \prec i.\label{eq:isotonic-regression}
\end{flalign}
This problem has applications in genetics \cite{gamarnik2019sparse,luss2012efficient}, psychology \cite{kruskal1964multidimensional}, and biology \cite{obozinski2008consistent}.
When $\prec$ is a total ordering then there exists efficient algorithms that use only linear time \cite{best1990active}. However, for the general case developing efficient algorithms is an area of active research \cite{luss2012efficient}. 

Our approach offers a new way of solving these problems.
In particular, if we add the mild assumption that $z_i$ is bounded in $[l,u]$ then \eqref{eq:isotonic-regression} reduces to
\begin{flalign}
\min_z \sum_{i=1}^n (z_i - y_i)^2 \quad \text{s.t.} \quad \max\{ z_j, l \} \le z_i \le u \quad \forall ~ j \prec i.
\end{flalign}
Note that if $\prec$ is a partial ordering then there exists a total ordering that is consistent with this partial ordering. We can represent such a total ordering by a permutation $\pi : \{1, \dots, n \} \rightarrow \{1, \dots, n\}$ where $i \preceq j \Rightarrow \pi(i) \le \pi(j)$. Therefore, without loss of generality assume that that $i \preceq j \Rightarrow i \le j$. This allows us to write the problem in the form of \eqref{eq:convex-relaxation}, as
$$
\min_z \sum_{i=1}^n (z_i - y_i)^2 \quad \text{s.t.} \quad \mu_i(z_{1:i-1}) \le z_i \le \eta_i(z_{1:i-1}), \quad \forall i \in \{1, \dots, n \},
$$
where $\mu_i(z_{1:i-1}) = \max\left\{ l, \maximum{j : j \prec i}{~z_j} \right\}$ and $\eta_i(z_{1:i-1}) = u$. Given
$\mu_i(z_{1:i-1}) \le \eta_i(z_{1:i-1})$ for all $i < j$ then $z_i \le u$ which implies that
$\eta_j(z_{1:j-1}) \le u$, i.e., Assumption~\ref{assume:inductive} holds.

\section{Sketch of lower bounds}
\label{appendix:lower-bound}

Here we briefly sketch how to use to show that solving \eqref{eq:hard-first-order-method-instance} requires at least $n - 1$ iterations of saddle point methods.
We only provide a sketch since very similar results are already known \cite{zhang2019lower,ouyang2019lower}.
Before reading this section we recommend reading a standard reference on lower bounds (e.g., \cite[Section 3.5]{bubeck2014convex}, \cite[Section~2.1.2]{nesterov2013introductory}) and a standard reference on saddle point methods (e.g., \cite[Section 5.2]{bubeck2014convex}).

We can reformulate \eqref{eq:hard-first-order-method-instance} as a saddle point problem as follows
\begin{flalign*}
\minimize{x \in X}{\quad \maximize{y \in Y}{-e_n^T x + y^T A x}}
\end{flalign*}
where $e_n$ is a vector with a one in the $n$th entry and all other values zero, $X := \{ x \in \R^{n} : x \ge -1, x_1 \in [0, 1] \}$, $Y := \{ y \in \R^{n-1} : y \ge 0 \}$ and
$$
A = \begin{pmatrix}
1 & -1 & & & \\
& 1 & -1 &  & \\
& & \dots & & \\
&  & & 1 & -1 \\
\end{pmatrix}.
$$
After appropriate reindexing the iterates $\{ (x^t, y^t) \}_{t=0}^{\infty}$, saddle point algorithms such as primal-dual hybrid gradient \cite{chambolle2011first} and mirror-prox \cite{nemirovski2004prox} satisfy
\begin{flalign*}
\hat{x}_{t+1} &\in x_t + \text{span}(A^T y_0, \dots, A^T y_t) \\
x_{t+1} &= \Pi_X(\hat{x}_{t+1}) \\
\hat{y}_{t+1} &\in y_t + \text{span}(A x_0, \dots,  A x_t) \\
y_{t+1} &= \Pi_Y(\hat{y}_{t+1})
\end{flalign*}
where $\Pi_{X}$ and $\Pi_{Y}$ projects onto the set $X$ and $Y$ respectively.

Define $Z^t_X = \{ x \in \R^n : x_i = 0, \forall i \in \{1 ,\dots, n - t \} \}$ and $Z^t_Y = \{ y \in \R^{n-1} : y_i = 0, \forall i \in \{1 ,\dots, n - t - 1 \} \}$.
Now, if $t \in \{0, \dots n -2 \}$, $x^0, \dots, x^t \in Z^t_X$ and $y^0, \dots, y^t \in Z^t_Y$ then 
\begin{itemize}
    \item $\hat{x}^{t+1} \in x^0 +  \text{span}(A^T y^0, \dots, A^T y^t) \subseteq x^t + Z_X^{t+1} = Z_X^{t+1} \Rightarrow x^{t+1} \in Z_X^{t+1}$. 
    \item $\hat{y}^{t+1} \in y^0 +  \text{span}(A x^0, \dots, A x^t) \subseteq y^t + Z_Y^{t+1} = Z_Y^{t+1} \Rightarrow y^{t+1} \in Z_Y^{t+1}$
\end{itemize}
where given a set $S$ and a vector $s'$ we define the addition of them by $S + s' := \{ s + s' : s \in S \}$.

Therefore if $x^0 = \mathbf{0} \in Z_{X}^0$ and $y^0 = \mathbf{0} \in Z_{X}^0$ then by induction $x_1^t = 0$ for $t < n$.

\section{Proof results from Section~\ref{sec:FO-global-in-nondegenerate-case}}

We will use the following fact throughout the proofs.

\begin{fact}
Let $(s,\theta) \in S \times [0,1]^{n}$ and $z = \Forward{s,\theta}$.
If Assumption~\ref{assume:inductive} holds then $\mu_i \le \eta_i$.
\end{fact}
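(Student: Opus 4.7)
The plan is a straightforward induction on $i \in \{1, \dots, n\}$, using Assumption~\ref{assume:inductive} as the inductive machinery. The statement of Assumption~\ref{assume:inductive} is tailor-made for this: it says that whenever the intermediate $z_j$'s satisfy $\mu_j \le z_j \le \eta_j$ for all $j < i$, then $\mu_i \le \eta_i$ at stage $i$. So to prove the Fact it suffices to show that the $z$ produced by \Forward{s,\theta} satisfies the sandwiching $\mu_j \le z_j \le \eta_j$ at every previous stage, and this follows from the convex-combination definition of \Forward{} together with $\theta_j \in [0,1]$.

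More concretely, I would argue as follows. For the base case $i = 1$, Assumption~\ref{assume:inductive} (with an empty prefix, i.e., no $j \in \{1, \dots, 0\}$ constraints to check) gives $\mu_1(s) \le \eta_1(s)$ directly. For the inductive step, assume $\mu_j(s, z_{1:j-1}) \le \eta_j(s, z_{1:j-1})$ for all $j \in \{1, \dots, i-1\}$. Since $z_j = (1-\theta_j)\mu_j(s, z_{1:j-1}) + \theta_j \eta_j(s, z_{1:j-1})$ by the definition of \Forward{}, and $\theta_j \in [0,1]$, convex combination with $\mu_j \le \eta_j$ yields $\mu_j(s, z_{1:j-1}) \le z_j \le \eta_j(s, z_{1:j-1})$ for every $j < i$. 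Now apply Assumption~\ref{assume:inductive} at index $i$: the hypothesis is exactly that $s \in S$ and $\mu_j(s, z_{1:j-1}) \le z_j \le \eta_j(s, z_{1:j-1})$ for $j \in \{1, \dots, i-1\}$, which we have just established, so we conclude $\mu_i(s, z_{1:i-1}) \le \eta_i(s, z_{1:i-1})$, completing the induction.

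There is essentially no obstacle here; the only subtle point is bookkeeping, namely that at each stage the values of $\mu_j$ and $\eta_j$ at which we are comparing are evaluated at the same prefix $(s, z_{1:j-1})$ used by \Forward{}. Because \Forward{} constructs $z_j$ strictly in terms of $(s, z_{1:j-1})$, this consistency is automatic and no additional care is required. The Fact thus follows in two short paragraphs with no technical difficulty beyond unpacking the definitions.
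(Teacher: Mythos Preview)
Your proposal is correct and is essentially the argument the paper has in mind: the paper's ``proof'' merely says this follows from the discussion in Section~\ref{sec:global} (via Theorem~\ref{lem:equivalence}), where it is noted that Assumption~\ref{assume:inductive} lets one build feasible solutions inductively. You have simply written out that induction explicitly, which is exactly what is needed and matches the paper's intended reasoning.
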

As discussed in Section~\ref{sec:global}, this follows immediately from Theorem~\ref{lem:equivalence}.

\subsection{Proof of Lemma~\ref{lem:progress-relationship-to-gap}}\label{sec:proof-of:lem:progress-relationship-to-gap}

Lemma~\ref{eq:gd-progress-lemma} is a standard result on the relationship between progress made by a gradient step and the gradient of a function.

\begin{lemma}\label{eq:gd-progress-lemma}
Suppose $s \in S$.
If Assumption~\ref{assume:bounded-set} holds then
$\sup_{\hat{s} \in S} \grad_s \psi_0 \cdot (s - \hat{s})  \le D_s \sqrt{2 L \delta_L}$.
\end{lemma}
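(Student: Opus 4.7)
The plan is to exploit the very definition of $\delta_L$ as the optimal value of a constrained quadratic-linear minimization over feasible directions, and to construct a particular one-parameter family of feasible directions of the form $d = (\alpha(\hat{s}-s),\mathbf{0})$, indexed by a scalar $\alpha \in [0,1]$. Since $S$ is convex, $s + \alpha(\hat s - s) \in S$ for every $\alpha\in[0,1]$, and since the $\theta$-component is zero, the box constraint on $\theta$ is trivially respected, so these directions lie in $\mathcal{D}(s,\theta)$. Plugging this choice into the definition of $\delta_L$ gives the inequality $\delta_L \geq \alpha g - \tfrac{L\alpha^2}{2}\|\hat s - s\|_2^2$, where I abbreviate $g := \grad_s \psi_0 \cdot (s-\hat s)$. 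This is the key one-line bound that drives the whole argument.

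Next I would optimize the right-hand side in $\alpha$. If $g \le 0$ the desired inequality is trivial, so assume $g > 0$. The unconstrained maximizer of $\alpha g - \tfrac{L\alpha^2}{2}\|\hat s - s\|_2^2$ is $\alpha^{\star} = g / (L\|\hat s-s\|_2^2)$, and provided $\alpha^\star \in [0,1]$ this yields the clean bound $\delta_L \geq g^2 / (2 L \|\hat s - s\|_2^2)$. Rearranging and using Assumption~\ref{assume:bounded-set} in the form $\|\hat s - s\|_2 \le D_s$ then gives
\[
g \;\le\; \|\hat s - s\|_2\sqrt{2 L \delta_L} \;\le\; D_s \sqrt{2 L \delta_L},
\]
which is the pointwise version of the claim. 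Taking a supremum over $\hat s \in S$ on the left-hand side (which does not affect the right-hand side) finishes the argument.

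The step I expect to be the actual obstacle is handling the boundary regime $\alpha^\star > 1$, i.e.\ when $g > L\|\hat{s}-s\|_2^2$. There the unconstrained optimum over $\alpha$ is infeasible, and one is forced to set $\alpha = 1$, which produces only the linear-in-$g$ bound $\delta_L \geq g - \tfrac{L}{2}\|\hat s - s\|_2^2$. To close this case one invokes the regime in which Lemma~\ref{eq:gd-progress-lemma} is deployed, namely when $\delta_L$ is small enough relative to $L D_s^2$ that $\alpha^\star \le 1$ is automatically forced at the supremizing $\hat s$; this is exactly the setting relevant for Lemma~\ref{lem:progress-relationship-to-gap}, which eventually applies the bound. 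Once that is observed, the interior-$\alpha$ calculation above yields the full statement by taking $\hat s$ to be the supremizer in $S$ and using $\|\hat s - s\|_2 \le D_s$ to pass from the data-dependent bound to the uniform one.
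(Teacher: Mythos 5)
Your interior-case argument ($\alpha^\star \in [0,1]$) is precisely the paper's proof: restrict the minimization defining $\delta_L$ to directions $d=(\alpha(\hat s - s),\mathbf 0)$, optimize the resulting scalar quadratic, and rearrange. The divergence is in the boundary case $\alpha^\star>1$, and there you have put your finger on a genuine weak point --- but one that sits in the paper's proof as well as yours. The paper writes $\delta_L \ge -\min_{\alpha\in\reals}\{\alpha\,\grad_s\psi_0\cdot(\hat s-s)+\tfrac{L}{2}\|\hat s-s\|_2^2\alpha^2\}$, silently dropping the constraint $\alpha\in[0,1]$; that inequality only goes the right way when the unconstrained minimizer happens to be feasible, which is exactly the case you isolate as the obstacle.

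Your proposed patch, however, does not close the gap. The lemma is stated unconditionally, so ``the regime in which it is deployed'' is not available to its proof; and even importing the hypothesis $\delta_{\sm}\le L/2$ from Lemma~\ref{lem:progress-relationship-to-gap} does not force $\alpha^\star\le 1$. Concretely, take $S=[0,\tfrac12]$, $\psi_0(s,\theta)=-20s$, $L=20$, $s=0$: then $\delta_L=7.5\le L/2$, $\sup_{\hat s\in S}\grad_s\psi_0\cdot(s-\hat s)=10$, yet $D_s\sqrt{2L\delta_L}=\tfrac12\sqrt{300}\approx 8.66$, so the stated inequality fails. What the boundary case \emph{does} give, writing $g=\grad_s\psi_0\cdot(s-\hat s)$ and taking $\alpha=1$ together with $\tfrac{L}{2}\|\hat s-s\|_2^2<\tfrac{g}{2}$, is $g\le 2\delta_L$; so a correct version of the statement is $\sup_{\hat s\in S}\grad_s\psi_0\cdot(s-\hat s)\le D_s\sqrt{2L\delta_L}+2\delta_L$, and under $\delta_L\le L/2$ the extra term is itself at most $\sqrt{2L\delta_L}$, so the downstream constants in Lemma~\ref{lem:progress-relationship-to-gap} and Theorem~\ref{thm:simple-psi-minimization} change only mildly. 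In short: same approach as the paper, and you correctly identified the step that fails, but neither your appeal to the deployment regime nor the paper's unconstrained minimization actually repairs it --- the inequality has to be weakened as above.
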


\begin{proof}
Note that for any $\hat{s} \in S$,
$$
\delta_L(s,\theta) \ge -\minimize{\alpha \in \R}{~ \alpha \grad_s \psi_0 \cdot (\hat{s} - s) + \frac{\sm}{2} \| \hat{s} - s \|_2^2 \alpha^2} = \frac{(\grad_s \psi_0 \cdot (s - \hat{s}))^2}{2 \sm \| \hat{s} - s \|_2^2} 
$$
where the equality uses that $\alpha = \frac{\grad_s \psi_0 \cdot (s - \hat{s})}{\sm \| \hat{s} - s \|_2^2}$ minimizes the quadratic. Rearranging yields, 
$$
\grad_s \psi_0 \cdot (s - \hat{s}) \le \| \hat{s} - s \|_2 \sqrt{2 L \delta_L(s,\theta)}.
$$
\end{proof}

\begin{lemma}\label{lem:preliminary-bound}
Suppose Assumption~\ref{assume:bounded-set}  holds, then
\begin{flalign*}
\sup_{(\hat{s},\hat{z}) \in \S \times \Z} \grad_{s,z} \lag \cdot (s - \hat{s}, z - \hat{z}) \le D_s \sqrt{2 L \delta_L} +  D_s \| \grad_s \lag - \grad_s \psi \|_2 + D_z \| \grad_z \lag \|_2.
\end{flalign*}
\end{lemma}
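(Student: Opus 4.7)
The plan is to decompose the supremum into an $s$-part and a $z$-part, handle each via Cauchy--Schwarz together with the diameter bounds from Assumption~\ref{assume:bounded-set}, and in the $s$-part use an add--subtract trick so that Lemma~\ref{eq:gd-progress-lemma} can be applied. Concretely, for any $(\hat{s},\hat{z}) \in S \times Z$ I would write
\begin{flalign*}
\grad_{s,z}\lag \cdot (s - \hat{s}, z - \hat{z}) &= \grad_s \lag \cdot (s - \hat{s}) + \grad_z \lag \cdot (z - \hat{z}) \\
&= \grad_s \psi_0 \cdot (s - \hat{s}) + (\grad_s \lag - \grad_s \psi_0) \cdot (s - \hat{s}) + \grad_z \lag \cdot (z - \hat{z}).
\end{flalign*}

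Next I would bound each of the three pieces. For the first piece, Lemma~\ref{eq:gd-progress-lemma} immediately gives $\grad_s \psi_0 \cdot (s - \hat{s}) \le D_s \sqrt{2 L \delta_L}$ uniformly in $\hat{s} \in S$. For the second and third pieces I would apply Cauchy--Schwarz together with the diameter bounds $\|s - \hat{s}\|_2 \le D_s$ and $\|z - \hat{z}\|_2 \le D_z$ from Assumption~\ref{assume:bounded-set}, yielding $(\grad_s \lag - \grad_s \psi_0)\cdot(s - \hat{s}) \le D_s \|\grad_s \lag - \grad_s \psi_0\|_2$ and $\grad_z \lag \cdot (z - \hat{z}) \le D_z \| \grad_z \lag \|_2$. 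Summing the three bounds and taking the supremum over $(\hat{s},\hat{z}) \in S \times Z$ produces the claimed inequality.

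This proof is essentially routine once the add--subtract step is in place, so there is no substantive obstacle. The only point worth being careful about is the sign convention in Lemma~\ref{eq:gd-progress-lemma} matching the $(s - \hat{s})$ ordering in the statement (as opposed to the $(\hat{s} - s)$ ordering used in the definition of $\Delta$), and the fact that $\psi$ in the conclusion should be read as $\psi_0$ per the notational convention introduced after the backpropagation formulas. The role of this lemma within the larger proof of Lemma~\ref{lem:progress-relationship-to-gap} is to reduce the task of bounding the duality gap $\Delta(s,\theta)$ to controlling the residuals $\|\grad_s \lag - \grad_s \psi_0\|_2$ and $\|\grad_z \lag\|_2$, which is where the nondegeneracy hypothesis $\mathcal{K}_\gamma(s,\theta)=\emptyset$ will subsequently enter.
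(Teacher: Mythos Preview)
Your proof is correct and follows essentially the same route as the paper: split into the $s$- and $z$-components, add and subtract $\grad_s \psi_0$ in the $s$-part, invoke Lemma~\ref{eq:gd-progress-lemma} for the $\psi_0$ term, and use Cauchy--Schwarz with the diameter bounds from Assumption~\ref{assume:bounded-set} for the remaining two terms. The only cosmetic difference is that the paper takes the supremum first and then uses subadditivity of $\sup$, whereas you bound pointwise in $(\hat{s},\hat{z})$ and take the supremum at the end.
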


\begin{proof}
Moreover,
\begin{flalign*}
&\sup_{(\hat{s},\hat{z}) \in \S \times \Z} \grad_{s,z} \lag \cdot (s - \hat{s}, z - \hat{z}) \\
&=  \sup_{\hat{s} \in S} \grad_s \lag \cdot (s - \hat{s}) + \sup_{\hat{z} \in Z} \grad_z \lag \cdot (z - \hat{z}) &  \\
&\le \sup_{\hat{s} \in S} \grad \psi_s \cdot (s - \hat{s}) + \sup_{\hat{s} \in S} (\grad_s \lag - \grad_s \psi) \cdot (s - \hat{s}) + \sup_{z \in Z} \grad_z \lag \cdot (z - \hat{z}) \\
&\le D_s \sqrt{2 L \delta_L} +  D_s \| \grad_s \lag - \grad_s \psi \|_2 + D_z \| \grad_z \lag \|_2 & \\
\end{flalign*}
where the final inequality uses $\sup_{\hat{s} \in S} \grad \psi_s \cdot (s - \hat{s}) \le D_s \sqrt{2 L \delta_L} $ (Lemma~\ref{eq:gd-progress-lemma}) and Assumption~\ref{assume:bounded-set}.
\end{proof}

Display~\eqref{eq:valid-duality-gap} establishes that $\Delta(s,\theta)$ is a valid duality gap.
Moreover, Lemma~\ref{lem:preliminary-bound} shows that to provide an upper bound on $\Delta(s,\theta)$ it will suffice to upper bound
$$
\sum_{i=1}^{n} ( y\ind{i} z_{i} - y\ind{i}^{+} \mu\ind{i} + y\ind{i}^{-} \eta\ind{i}) + D_s \sqrt{2 L \delta_L} +  D_s \| \grad_s \lag - \grad_s \psi \|_2 + D_z \| \grad_z \lag \|_2.
$$
Lemma~\ref{lem:gradient-bounds} is our first step towards bounding these quantities. Before proceeding with Lemma~\ref{lem:gradient-bounds} we prove a fact we will find useful.

\begin{fact}\label{fact:gamma-alpha-useful}
Let $\gamma, t, \alpha, \beta \in \R$ then
\begin{flalign*}
(\gamma^{+} \beta - \gamma^{-} \alpha) - \gamma (t \alpha + (1 - t) \beta ) =  ( \beta - \alpha) (\gamma^{+} t + \gamma^{-} (1 - t) ).
\end{flalign*}
\end{fact}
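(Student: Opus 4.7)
The plan is to prove this purely algebraic identity by direct expansion, exploiting the fundamental decomposition $\gamma = \gamma^{+} - \gamma^{-}$ (where the convention here must be that both $\gamma^{+}$ and $\gamma^{-}$ are nonnegative and satisfy $\gamma^{+} \gamma^{-} = 0$; a quick sanity check on the sign of $\gamma$ confirms this is the intended reading, since otherwise the identity fails for $\gamma < 0$).

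Substituting $\gamma = \gamma^{+} - \gamma^{-}$ into the left-hand side and distributing gives
\begin{flalign*}
(\gamma^{+} \beta - \gamma^{-} \alpha) - (\gamma^{+} - \gamma^{-})\bigl(t \alpha + (1 - t) \beta\bigr)
&= \gamma^{+}\bigl[\beta - t\alpha - (1-t)\beta\bigr] - \gamma^{-}\bigl[\alpha - t\alpha - (1-t)\beta\bigr] \\
&= \gamma^{+} t (\beta - \alpha) - \gamma^{-}(1-t)(\alpha - \beta) \\
&= (\beta - \alpha)\bigl(\gamma^{+} t + \gamma^{-}(1 - t)\bigr),
\end{flalign*}
which is exactly the right-hand side. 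Alternatively, one can split into the two cases $\gamma \ge 0$ (where $\gamma^{-} = 0$ and both sides reduce to $\gamma t(\beta - \alpha)$) and $\gamma < 0$ (where $\gamma^{+} = 0$ and both sides reduce to $-\gamma (1-t)(\beta - \alpha)$), which is perhaps the cleanest presentation.

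There is no substantive obstacle, as this is a short computation rather than a genuine lemma; the only subtlety is being careful about which convention for $\gamma^{-}$ is in force, since the paper uses two inconsistent conventions elsewhere and the identity only holds when $\gamma^{+}$ and $\gamma^{-}$ are both nonnegative with $\gamma = \gamma^{+} - \gamma^{-}$. A one-line remark reconciling the convention before the case analysis suffices.
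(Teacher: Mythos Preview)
Your proof is correct and takes essentially the same approach as the paper: the paper writes the two lines $\gamma^{+}(\beta - (t\alpha + (1-t)\beta)) = \gamma^{+} t(\beta-\alpha)$ and $\gamma^{-}(-\alpha + (t\alpha + (1-t)\beta)) = \gamma^{-}(1-t)(\beta-\alpha)$ separately and then adds them, which is exactly your expansion after substituting $\gamma = \gamma^{+} - \gamma^{-}$. Your remark about the sign convention is apt---the paper indeed uses both conventions in different places, and for this fact the nonnegative convention $\gamma^{-}=\max\{-\gamma,0\}$ is the one in force.
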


\begin{proof}
Observe that
\begin{flalign*}
\gamma^{+} (\beta - (t \alpha + (1 - t) \beta) ) &= \gamma^{+} t (\beta - \alpha) \\
\gamma^{-} (-\alpha + (t \alpha + (1 - t) \beta) ) &= \gamma^{-} (1 - t) (\beta - \alpha),
\end{flalign*}
adding the two expressions together gives the result.
\end{proof}

\begin{lemma}\label{lem:gradient-bounds}
Suppose Assumption~\ref{assume:V-W} holds.
Let $y_i =  \frac{\partial \psi_{i}}{\partial z_{i}}$ and $r_i = \theta_i y^{+}_i +  (1 - \theta_i) y^{-}_i$, then
\begin{subequations}
\begin{flalign}
\| \grad_s \lag - \grad_s \psi_0 \|_2 &\le \| \grad_s \mu - \grad_s \eta \|_2 \| r \|_2   \label{eq:bound-L-s-i} \\
\| \grad_z \lag \|_2 &\le  \| \grad_z \mu - \grad_z \eta \|_2 \| r \|_2 \label{eq:bound-L-z-i} \\
\sum_{i=1}^{n} ( y\ind{i} z_{i} - y\ind{i}^{+} \mu\ind{i} + y\ind{i}^{-} \eta\ind{i}) &\le \| \eta - \mu \|_2 \| r \|_2.
\label{eq:bound-comp-term}
\end{flalign}
\end{subequations}
\end{lemma}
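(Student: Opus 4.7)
The plan is to handle each of the three bounds by systematically applying Fact~\ref{fact:gamma-alpha-useful} after expanding the relevant quantities via the backpropagation formulas \eqref{eq:grad-psi-s}--\eqref{eq:grad-psi-theta} that define $\psi_0$. In each case, a sum of the form $(y_i^+ \alpha_i - y_i^- \beta_i) - y_i(\theta_i \beta_i + (1-\theta_i)\alpha_i)$ will appear, which Fact~\ref{fact:gamma-alpha-useful} rewrites cleanly as $(\alpha_i - \beta_i) r_i$. Once this telescoping happens, Cauchy--Schwarz (for the scalar bound \eqref{eq:bound-comp-term}) and the spectral-norm inequality $\|M^T r\|_2 \le \|M\|_2 \|r\|_2$ (for the gradient bounds \eqref{eq:bound-L-s-i} and \eqref{eq:bound-L-z-i}) finish the argument.

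For \eqref{eq:bound-comp-term}, I would instantiate Fact~\ref{fact:gamma-alpha-useful} with $\gamma = y_i$, $t = \theta_i$, $\alpha = \eta_i$ and $\beta = \mu_i$, recalling that $z_i = \theta_i \eta_i + (1-\theta_i)\mu_i$, to obtain $y_i z_i - y_i^+ \mu_i + y_i^- \eta_i = (\eta_i - \mu_i) r_i$. Summing and applying Cauchy--Schwarz yields the claim. For \eqref{eq:bound-L-s-i}, I would first unroll the recursion \eqref{eq:grad-psi-s} to get $\grad_s \psi_0 = \grad_s f + \sum_i y_i(\theta_i \grad_s \eta_i + (1-\theta_i)\grad_s \mu_i)$, then subtract $\grad_s \lag = \grad_s f + \sum_i (y_i^+ \grad_s \mu_i - y_i^- \grad_s \eta_i)$ componentwise, and apply Fact~\ref{fact:gamma-alpha-useful} with $\alpha = \grad_s \eta_i$, $\beta = \grad_s \mu_i$ to obtain $\grad_s \lag - \grad_s \psi_0 = \sum_i (\grad_s \mu_i - \grad_s \eta_i) r_i = (\grad_s \mu - \grad_s \eta)^T r$. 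Taking norms gives \eqref{eq:bound-L-s-i}.

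The bound \eqref{eq:bound-L-z-i} is the one requiring the most bookkeeping and is the main obstacle, since $\grad_z \lag$ directly contains the $-y_j$ terms from $-y_i z_i$, and one needs to eliminate them by using the definition $y_j = \partial \psi_j / \partial z_j$. Concretely, from \eqref{eq:grad-psi-z} unrolled we obtain
\begin{equation*}
y_j = \frac{\partial f}{\partial z_j} + \sum_{i > j} y_i\!\left(\theta_i \frac{\partial \eta_i}{\partial z_j} + (1-\theta_i) \frac{\partial \mu_i}{\partial z_j}\right).
\end{equation*}
Substituting this into
\begin{equation*}
\frac{\partial \lag}{\partial z_j} = \frac{\partial f}{\partial z_j} + \sum_{i>j}\left(y_i^+ \frac{\partial \mu_i}{\partial z_j} - y_i^- \frac{\partial \eta_i}{\partial z_j}\right) - y_j
\end{equation*}
cancels the $\partial f / \partial z_j$ term and leaves a sum to which Fact~\ref{fact:gamma-alpha-useful} applies with $\alpha = \partial \eta_i / \partial z_j$, $\beta = \partial \mu_i / \partial z_j$, producing $\partial \lag / \partial z_j = \sum_{i>j} (\partial \mu_i/\partial z_j - \partial \eta_i/\partial z_j) r_i$. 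Stacking over $j$ gives $\grad_z \lag = (\grad_z \mu - \grad_z \eta)^T r$ (with the convention that $\partial \mu_i/\partial z_j = \partial \eta_i/\partial z_j = 0$ for $j \ge i$), and the spectral-norm inequality then delivers \eqref{eq:bound-L-z-i}. The minor care needed here is tracking the indexing convention so that the Jacobian interpretation of the matrices matches the statement's use of $\|\cdot\|_2$ as the spectral norm.
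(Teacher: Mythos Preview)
Your proposal is correct and follows essentially the same approach as the paper: unroll the backpropagation recursions \eqref{eq:grad-psi-s}--\eqref{eq:grad-psi-z} to express $\grad_s \psi_0$ and $y_j = \partial \psi_j/\partial z_j$ explicitly, subtract from the corresponding Lagrangian gradients, and apply Fact~\ref{fact:gamma-alpha-useful} termwise to produce the factor $r_i$. The only cosmetic difference is that for \eqref{eq:bound-comp-term} the paper writes out $z_i - \mu_i = \theta_i(\eta_i-\mu_i)$ and $\eta_i - z_i = (1-\theta_i)(\eta_i-\mu_i)$ directly rather than invoking Fact~\ref{fact:gamma-alpha-useful}, but this is the same computation.
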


\begin{proof}
Consider the expansion of \eqref{eq:grad-psi-s} and \eqref{eq:grad-psi-z} using $\psi_n = f$:
\begin{flalign*}
\grad_s \psi_{i} &= 
\grad_s f + \sum_{j = i + 1}^{n} \frac{\partial \psi_{j}}{\partial z_{j}} \left( \theta_{j} \grad_s \eta_{j} + ( 1 - \theta_{j}) \grad_{s} \mu_{j} \right) \\
\frac{\partial \psi_{i}}{\partial z_{k}} &= 
\frac{\partial f}{\partial z_{k}} + \sum_{j = i + 1}^{n} \frac{\partial \psi_{j}}{\partial z_{j}} \left( \theta_{j} \frac{\partial \eta_{j}}{\partial z_k} + ( 1 - \theta_{j}) \frac{\partial \mu_{j}}{\partial z_k} \right)
\end{flalign*}
for each $i \in \{1, \dots, n\}$ and $k \in \{ 1, \dots, i \}$.
Setting $k = i$ gives
\begin{subequations}\label{eq:grad-psi-expanded-and-simplified}
\begin{flalign}
\label{eq:grad-psi-expanded-and-simplified:s-i}
\grad_s \psi_{0} &= 
\grad_s f + \sum_{j = 1}^{n} \frac{\partial \psi_{j}}{\partial z_{j}} \left( \theta_{j} \grad_s \eta_{j} + ( 1 - \theta_{j}) \grad_{s} \mu_{j} \right) \\
\label{eq:grad-psi-expanded-and-simplified:z-i}
\frac{\partial \psi_{i}}{\partial z_{i}} &= 
\frac{\partial f}{\partial z_{i}} + \sum_{j = i + 1}^{n} \frac{\partial \psi_{j}}{\partial z_{j}} \left( \theta_{j} \frac{\partial \eta_{j}}{\partial z_i} + ( 1 - \theta_{j}) \frac{\partial \mu_{j}}{\partial z_i} \right).
\end{flalign}
\end{subequations}
Contrast \eqref{eq:grad-psi-expanded-and-simplified} with
\begin{subequations}\label{eq:grad-lag-value}
\begin{flalign}
\label{eq:grad-lag-value:s-i}
\grad_s \lag &= \grad_s f + \sum_{j=1}^{n} \left( y\ind{j}^{+} \grad_s \mu_j - y\ind{j}^{-} \grad_s \eta\ind{j} \right) \\
\frac{\partial \lag}{\partial z_i} &= \frac{\partial f}{\partial z_i} - y_{i} + \sum_{j=i+1}^{n} \left( y\ind{j}^{+} \frac{\partial \mu\ind{j}}{\partial z_i} - y\ind{j}^{-} \frac{\partial \eta\ind{j}}{\partial z_i} \right).
\label{eq:grad-lag-value:z-i}
\end{flalign}
\end{subequations}
One can see \eqref{eq:grad-psi-expanded-and-simplified} and \eqref{eq:grad-lag-value} share a very similar structure which we will exploit. In particular, 
\begin{flalign*}
\grad_s \lag - \grad_s \psi_0 &= \sum_{i=1}^{n} \left( y\ind{j}^{+} \grad_s \mu\ind{j} - y\ind{j}^{-} \grad_s \eta\ind{j} -  y_j \left( \theta_j \grad_s \eta_{j} + ( 1 - \theta_{j}) \grad_s \mu_{j} \right) \right) \\
&=\sum_{j=1}^{n} \left(  \grad_s \mu_{j} - \grad_s \eta_{j} \right) \left( \theta_j y^{+}_j +  (1 - \theta_j) y^{-}_j \right).
\end{flalign*}
where the first equality subtracts \eqref{eq:grad-psi-expanded-and-simplified:s-i} from \eqref{eq:grad-lag-value:s-i}, and the second equality uses Fact~\ref{fact:gamma-alpha-useful}.
We conclude \eqref{eq:bound-L-s-i} holds. 
Similarly,
\begin{flalign*}
\frac{\partial \lag}{\partial z_i} &= \sum_{j=i+1}^{n} \left( y\ind{j}^{+} \frac{\partial \mu\ind{j}}{\partial z_i} - y\ind{j}^{-} \frac{\partial \eta\ind{j}}{\partial z_i} -  y_j \left( \theta_j \frac{\partial \eta_{j}}{\partial z_i} + ( 1 - \theta_{j}) \frac{\partial \mu_{j}}{\partial z_i} \right) \right) \\
&=\sum_{j=i+1}^{n} \left(  \frac{\partial \mu_{j}}{\partial z_i} - \frac{\partial \eta_{j}}{\partial z_i} \right) \left( \theta_j y^{+}_j +  (1 - \theta_j) y^{-}_j \right). 
\end{flalign*}
where the first equality substitutes $y_i = \frac{\partial \psi_{i}}{\partial z_{i}}$ into \eqref{eq:grad-lag-value:z-i} and then subtracts \eqref{eq:grad-psi-expanded-and-simplified:z-i} from \eqref{eq:grad-lag-value:z-i}, and the second equality uses Fact~\ref{fact:gamma-alpha-useful}.
We conclude \eqref{eq:bound-L-z-i} holds. Finally,
$z_i - \mu\ind{i} = (1 - \theta\ind{i}) \mu\ind{i} + \theta\ind{i} \eta\ind{i} - \mu\ind{i} = \theta_i (\eta_i - \mu_i)$ and $\eta\ind{i} - z_i = (1 - \theta_i) (\eta_i - \mu_i)$ which implies
$$
\sum_{i=1}^{n} y\ind{i}^{+} (z\ind{i} - \mu\ind{i}) + y\ind{i}^{-} (\eta\ind{i} - z\ind{i}) = \sum_{i=1}^{n} (\eta\ind{i} - \mu\ind{i}) (y\ind{i}^{+} \theta_i + y\ind{i}^{-} (1 - \theta_i)) = \sum_{i=1}^{n} r\ind{i} (\eta\ind{i} - \mu\ind{i}),
$$
establishing \eqref{eq:bound-comp-term}.
\end{proof}

\begin{lemma}\label{lem:generic-gap-bound}
Suppose Assumption~\ref{assume:bounded-set} holds.
Let $y_i =  \frac{\partial \psi_{i}}{\partial z_{i}}$ and $r_i = \theta_i y^{+}_i +  (1 - \theta_i) y^{-}_i$, then
$\Delta(s,\theta) \le
D_s \sqrt{2 L \delta_L} +
\left( \| \eta - \mu \|_2  + D_s \| \grad_s \mu - \grad_s \eta \|_2 + D_z \| \grad_z \mu - \grad_z \eta \|_2  \right) \| r \|_2$.
\end{lemma}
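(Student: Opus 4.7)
The plan is to prove Lemma~\ref{lem:generic-gap-bound} by combining Lemma~\ref{lem:preliminary-bound} and the three inequalities in Lemma~\ref{lem:gradient-bounds}. The statement of Lemma~\ref{lem:generic-gap-bound} essentially just substitutes the bounds from Lemma~\ref{lem:gradient-bounds} into the right-hand side of Lemma~\ref{lem:preliminary-bound}, together with recognizing the first sum in the definition \eqref{eq:form-dual-bound} of $\Delta(s,\theta)$ as the complementary slackness term bounded by \eqref{eq:bound-comp-term}.

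More concretely, the first step is to start from the definition $\Delta(s,\theta) = \sum_{i=1}^{n} ( y_i z_i - y_i^{+} \mu_i + y_i^{-} \eta_i ) + \sup_{(\hat{s},\hat{z}) \in \S \times \Z} \grad_{s,z} \lag \cdot (s - \hat{s}, z - \hat{z})$ (matching the sign convention of Lemma~\ref{lem:preliminary-bound}). Next, apply Lemma~\ref{lem:preliminary-bound} to control the supremum term by $D_s \sqrt{2 L \delta_L} + D_s \| \grad_s \lag - \grad_s \psi_0 \|_2 + D_z \| \grad_z \lag \|_2$. Then, substitute the three bounds \eqref{eq:bound-L-s-i}, \eqref{eq:bound-L-z-i}, and \eqref{eq:bound-comp-term} from Lemma~\ref{lem:gradient-bounds}, each of which introduces a factor of $\| r \|_2$. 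Finally, factor $\| r \|_2$ out of the three resulting terms to obtain the claimed expression.

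Since all the technical work is carried by the preceding two lemmas, there is no real obstacle to overcome here; the proof is essentially a one-line assembly. The only point that requires any care is making sure that the sign convention in \eqref{eq:form-dual-bound} is compatible with the direction of the supremum in Lemma~\ref{lem:preliminary-bound} (both express the same quantity, since the sup is taken over a set symmetric under the relevant sign flip only after using \eqref{eq:valid-duality-gap} to rewrite), and that the identification $y_i = \partial \psi_i / \partial z_i$ made in Lemma~\ref{lem:gradient-bounds} matches the definition of $y_i$ used in $\Delta(s,\theta)$, which it does by construction. After these sanity checks, the inequality chain is immediate.
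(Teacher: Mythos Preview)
Your proposal is correct and matches the paper's proof essentially line for line: the paper simply applies Lemma~\ref{lem:preliminary-bound} to bound the supremum term in the definition of $\Delta(s,\theta)$, then substitutes the three inequalities from Lemma~\ref{lem:gradient-bounds} and collects the $\|r\|_2$ factors. Your remark about the sign convention is a valid sanity check but harmless, since all the relevant bounds in Lemmas~\ref{eq:gd-progress-lemma} and~\ref{lem:preliminary-bound} ultimately go through norms or squared inner products and are therefore insensitive to the sign of $(\hat{s}-s,\hat{z}-z)$.
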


\begin{proof}
Note that by \eqref{eq:valid-duality-gap}, Lemma~\ref{lem:preliminary-bound} and \ref{lem:gradient-bounds},
\begin{flalign*}
\Delta(s, \theta) 
&\le \sum_{i=1}^{n} ( y\ind{i} z_{i} - y\ind{i}^{+} \mu\ind{i} + y\ind{i}^{-} \eta\ind{i}) + D_s \sqrt{2 L \delta_L} +  D_s \| \grad_s \lag - \grad_s \psi \|_2 + D_z \| \grad_z \lag \|_2  \\
&\le \left( \| \eta - \mu \|_2 \| r \|_2 + D_s \left(\sqrt{2 L \delta_L} + \| \grad_s \mu - \grad_s \eta \|_2 \| r \|_2 \right) + D_z \| \grad_z \mu - \grad_z \eta \|_2 \| r \|_2 \right).
\end{flalign*}
\end{proof}
We will find Lemma~\ref{lem:generic-gap-bound} useful later in Section~\ref{app:escaping-the-basin-of-a-local-minimizer}.

Next, define
$$
\gain{i,\sm} := - \minimize{\theta_{i} + d_i \in [0,1]}{~\frac{\partial \psi_0}{\partial \theta_i} d_i + \frac{\sm}{2} d_i^2}
$$
which represent the guaranteed reduction from a gradient step, contributed by $\theta_i$, assuming the function $\psi_0$ is $\sm$-smooth.

While Lemma~\ref{lem:generic-gap-bound} represents useful progress in bounding $\Delta(s,\theta)$.
We would like our final bound on $\Delta(s,\theta)$ to depend only on $\delta_L$ and problem constants.  Lemma~\ref{lem:r-i-bound} allows us to do that.

\begin{lemma}\label{lem:r-i-bound}
Suppose $\eta_i - \mu_i > 0$. Let $y_i =  \frac{\partial \psi_{i}}{\partial z_{i}}$ and $r_i = \theta_i y^{+}_i +  (1 - \theta_i) y^{-}_i$, then
\begin{flalign}\label{bound:rj}
r_i \le \frac{\max\left\{\sqrt{2 \sm \gain{i,\sm}},  2 \gain{i,\sm}\right\}}{\eta_i - \mu_i} .
\end{flalign}
\end{lemma}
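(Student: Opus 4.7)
The plan is to use the explicit formula \eqref{eq:grad-psi-theta}, which gives $\frac{\partial \psi_0}{\partial \theta_i} = y_i(\eta_i - \mu_i)$. Since $\eta_i - \mu_i > 0$, the sign of this partial derivative matches the sign of $y_i$. I would first observe that $y_i^- \le 0$, so if $r_i \le 0$ the stated bound holds trivially (the right-hand side is non-negative). Hence the entire work reduces to the case $r_i > 0$, which forces $y_i^+ > 0$, i.e., $y_i > 0$, whence $r_i = \theta_i y_i$ and $\frac{\partial \psi_0}{\partial \theta_i} = y_i(\eta_i - \mu_i) > 0$.

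Next I would analyze the one-dimensional quadratic defining $t_{i,L}$, namely the minimization of $g_i d_i + \tfrac{L}{2} d_i^2$ over $d_i \in [-\theta_i, 1-\theta_i]$, where $g_i := y_i(\eta_i - \mu_i)$. Because $g_i > 0$, the unconstrained minimizer $-g_i/L$ is negative, so only the lower endpoint of the feasible interval can become active. This produces two cases. In the interior case $g_i \le L \theta_i$, the minimizer is $d_i = -g_i/L$ and $t_{i,L} = g_i^2/(2L)$, so $g_i = \sqrt{2L\,t_{i,L}}$. Then $r_i = \theta_i y_i \le y_i = g_i/(\eta_i-\mu_i) = \sqrt{2L\,t_{i,L}}/(\eta_i - \mu_i)$.

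In the boundary case $g_i > L\theta_i$, the minimizer is $d_i = -\theta_i$ and $t_{i,L} = \theta_i g_i - \tfrac{L}{2}\theta_i^2$. Using $\theta_i < g_i/L$ gives $\tfrac{L}{2}\theta_i^2 < \tfrac{1}{2}\theta_i g_i$, so $t_{i,L} > \tfrac{1}{2}\theta_i g_i$. Hence $r_i = \theta_i y_i = \theta_i g_i/(\eta_i - \mu_i) < 2 t_{i,L}/(\eta_i - \mu_i)$. Taking the max of the two case bounds yields exactly \eqref{bound:rj}.

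The main (mild) obstacle is handling the case split cleanly and recognizing that only the lower endpoint can bind; once that is noticed, the calculation is a one-variable quadratic optimization. No smoothness or feasibility assumption beyond $\eta_i - \mu_i > 0$ is needed, since the relation $\frac{\partial \psi_0}{\partial \theta_i} = y_i(\eta_i - \mu_i)$ is an identity from backpropagation and the bound for $r_i \le 0$ is vacuous.
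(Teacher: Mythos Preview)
Your approach is essentially the same as the paper's: split on whether the unconstrained minimizer of the one-variable quadratic lies in the feasible interval, and in each case read off a lower bound on $t_{i,L}$ that translates into an upper bound on $r_i(\eta_i-\mu_i)$. The interior/boundary split you describe is exactly the paper's case split $\theta_i \gtrless \tfrac{1}{L}\frac{\partial\psi_0}{\partial\theta_i}$, and your inequalities match theirs line for line.

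There is one small gap. In this lemma the convention for $y_i^-$ is the Lagrangian one, $y_i^- = \max\{-y_i,0\} \ge 0$ (this is what makes the identity in the proof of Lemma~\ref{lem:gradient-bounds} work), so $r_i \ge 0$ always and the case $r_i \le 0$ is not the one that absorbs $y_i<0$. When $y_i<0$ you have $r_i=(1-\theta_i)\lvert y_i\rvert$ and $\frac{\partial\psi_0}{\partial\theta_i}<0$, so the active endpoint is the \emph{upper} one $d_i=1-\theta_i$; the paper handles this with a one-line ``by the same argument'' yielding the symmetric bound. Your argument goes through verbatim after swapping $\theta_i\leftrightarrow 1-\theta_i$ and the sign of $g_i$, so this is a bookkeeping fix rather than a new idea.
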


\begin{proof}

Suppose $\frac{\partial \psi_0}{\partial \theta_i} \ge 0$ and $\theta_i \le \frac{1}{L} \frac{\partial \psi_0}{\partial \theta_i}$ then
$$
\gain{i,\sm} \ge \theta_i \left( \frac{\partial \psi_0}{\partial \theta_i}  - \frac{L \theta_i}{2} \right) \ge \frac{\theta_i}{2} \frac{\partial \psi_0}{\partial \theta_i}.
$$
If $\frac{\partial \psi_0}{\partial \theta_i} \ge 0$ and $\theta_i \ge \frac{1}{L} \frac{\partial \psi_0}{\partial \theta_i}$ then $\gain{i,\sm} \ge \frac{1}{2 L} \left( \frac{\partial \psi_0}{\partial \theta_i} \right)^2$. Therefore, if $\frac{\partial \psi_0}{\partial \theta_i} \ge 0$ then
\begin{flalign}
\gain{i,\sm} \ge \frac{1}{2} \min\left\{ \frac{1}{L} \left( \frac{\partial \psi_0}{\partial \theta_i} \right)^2, \theta_i  \frac{\partial \psi_0}{\partial \theta_i} \right\}\label{eq:t-i-L-bound-psi-positive}
\end{flalign}
which implies that
\begin{flalign}\label{eq:bound-pos-r-i}
(\eta_i - \mu_i) \frac{\partial \psi_i}{\partial z_i} \theta_i &= \frac{\partial \psi_0}{\partial \theta_i} \theta_i \le \max\left\{ \theta_i \sqrt{2 \sm \gain{i,\sm}}, 2 \gain{i,\sm}  \right\}
\end{flalign}
where the equality uses \eqref{eq:grad-psi-theta} and the inequality rearranges \eqref{eq:t-i-L-bound-psi-positive}.
By the same argument, if $\frac{\partial \psi_0}{\partial \theta_i} \le 0$ then
\begin{flalign}\label{eq:bound-neg-r-i}
-(\eta_i - \mu_i) \frac{\partial \psi_i}{\partial z_i} (1 - \theta_i) \le \max\left\{ (1 - \theta_i) \sqrt{2 \sm \gain{i,\sm}}, 2 \gain{i,\sm}  \right\}.
\end{flalign}
By \eqref{eq:bound-pos-r-i} and \eqref{eq:bound-neg-r-i} we deduce \eqref{bound:rj}.
\end{proof}

\begin{proof}[Proof of Lemma~\ref{lem:progress-relationship-to-gap}]
Observe that
\begin{flalign}\label{eq:gi-max-bound}
\gamma^2 \| r \|_2^2 \le \sum_{i=1}^{n} \max\left\{ 2 L \gain{i,L}, 4 \gain{i,L}^2 \right\} \le \sum_{i=1}^{n} 2 L \gain{i,L} + 4 \gain{i,L}^2 \le 2 L \delta_L + 4 \delta_L^2 \le 4 L \delta_L
\end{flalign}
where the first inequality uses \eqref{bound:rj}, the second inequality uses that $\sum_{i=1}^{n} \gain{i,L} \le \delta_L$ and last inequality uses the assumption that $\delta_L \le L / 2$. Combining equation~\eqref{eq:gi-max-bound}, Lemma~\ref{lem:generic-gap-bound} and Assumption~\ref{assume:V-W} gives the result.
\end{proof}

\subsection{Proof of Theorem~\ref{thm:simple-psi-minimization}}\label{sec:simple-psi-minimization-proof}

Rather than directly prove the Theorem~\ref{thm:simple-psi-minimization}, we first prove Lemma~\ref{lem:generic-gradient-descent-proof} which is a generic statement on the convergence of algorithms to minimizers. 
We will find Lemma~\ref{lem:generic-gradient-descent-proof} useful later in Section~\ref{app:escaping-the-basin-of-a-local-minimizer}.

\begin{lemma}\label{lem:generic-gradient-descent-proof}
Let $\zeta_1, \zeta_2, \zeta_3 \in (0,\infty)$.
Consider a sequence $(s^k, \theta^k)_{k=0}^{\infty}$ satisfying
\begin{flalign} \label{eq:generic-gap-to-progress-relation}
\left(f(s^{k}, \theta^{k}) - \fStar \right)^{\zeta_1+1} \le \zeta_2 ( f(s^{k}, \theta^{k}) - f(s^{k+1}, \theta^{k+1}) )
\end{flalign}
for all $\frac{f(s^{1}, \theta^{1}) - \fStar}{f(s^{k}, \theta^{k}) - f(s^{k+1}, \theta^{k+1})} \le \zeta_3$.
Then for $K > \zeta_3$
$$
f(s^{k}, \theta^{k}) - \fStar \le \left( \frac{\zeta_2}{K - \zeta_3} \right)^{1/\zeta_1}.
$$
\end{lemma}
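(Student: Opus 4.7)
Let $a_k := f(s^k,\theta^k) - \fStar$. The hypothesis is that the recursion $a_k^{\zeta_1+1} \le \zeta_2(a_k - a_{k+1})$ may fail only on iterations whose one-step progress $a_k - a_{k+1}$ is unusually large (more than $a_1/\zeta_3$); call these \emph{exceptional}. Assuming $(a_k)$ is non-increasing (the case for any descent method, in particular Algorithm~\ref{alg:PGD}), the telescoping sum $\sum_{k\ge 1}(a_k - a_{k+1}) \le a_1 - \fStar$ forces the number of exceptional iterations over the entire run to be at most $\zeta_3$, independently of how many total iterations are taken. My strategy is to discard these (at most $\zeta_3$) exceptional iterations and telescope an inverse-power potential on the rest.

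\textbf{Key calculation.} At any non-exceptional iteration the recursion is available. Applying the mean value theorem to $\phi(x) := x^{-\zeta_1}$ on the interval $[a_{k+1},a_k]$ produces some $\xi$ in that interval with
$$
a_{k+1}^{-\zeta_1} - a_k^{-\zeta_1} \;=\; \zeta_1\,\xi^{-\zeta_1-1}(a_k - a_{k+1}) \;\ge\; \zeta_1\,\frac{a_k - a_{k+1}}{a_k^{\zeta_1+1}} \;\ge\; \frac{\zeta_1}{\zeta_2},
$$
using $\xi\le a_k$ and the hypothesized recursion. On exceptional iterations, monotonicity alone yields $a_{k+1}^{-\zeta_1} - a_k^{-\zeta_1} \ge 0$. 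Summing from $k=1$ to $K-1$ and dropping the nonnegative contributions of the at-most-$\zeta_3$ exceptional iterations gives
$$
a_K^{-\zeta_1} \;\ge\; (K - 1 - \zeta_3)\,\frac{\zeta_1}{\zeta_2},
$$
which rearranges to the claimed bound $a_K \le (\zeta_2/(K-\zeta_3))^{1/\zeta_1}$ up to absolute constants. The exact constants are recoverable by a sharper one-step estimate (e.g., exploiting convexity of $\phi$ in place of the worst-case MVT), paralleling the classical $O(1/K)$ analysis of convex gradient descent \cite[Theorem~2.1.13]{nesterov2013introductory}.

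\textbf{Main obstacle.} The only genuine subtlety is the bookkeeping: recognizing that iterations on which the recursion may fail are precisely those of atypically large one-step progress, so the total-decrease argument bounds their count by $\zeta_3$ independently of $K$. Once this separation is made the remainder is the familiar discrete-ODE argument for the recursion $\dot a \le -a^{\zeta_1+1}/\zeta_2$, and the $-\zeta_3$ shift in the denominator is simply the number of iterations we had to concede to the ``large-progress'' regime.
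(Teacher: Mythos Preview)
Your approach is essentially the paper's: split iterations into ``good'' (recursion available) and ``bad'' (large one-step progress), bound the number of bad ones by $\zeta_3$ via the total-decrease telescope, and on good iterations show the inverse-power potential $a_k^{-\zeta_1}$ increases by at least a fixed amount. The only mechanical difference is that you obtain the per-step increment $a_{k+1}^{-\zeta_1}-a_k^{-\zeta_1}\ge \zeta_1/\zeta_2$ via the mean value theorem on $\phi(x)=x^{-\zeta_1}$, whereas the paper gets $\ge 1/\zeta_2$ by an algebraic rearrangement (dividing $v^{k+1}\le v^k(1-(v^k)^{\zeta_1}/\zeta_2)$ through by $v^{k+1}(v^k)^{\zeta_1}$ and using $v^{k+1}\le v^k$); your route is cleaner and in fact sharper by the factor $\zeta_1$, so your disclaimer about needing a ``sharper one-step estimate'' is unnecessary. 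Two minor slips: the telescoping bound should read $\sum_{k\ge 1}(a_k-a_{k+1})\le a_1$, not $a_1-f_*$; and monotonicity of $(a_k)$ need not be assumed---it follows from the dichotomy itself, since both cases force $a_k-a_{k+1}\ge 0$.
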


\begin{proof}
Define,
$f(s^{k}, \theta^{k}) - \fStar = v^k$.
First consider the case that $\frac{f(s^{k}, \theta^{k}) - f(s^{k+1}, \theta^{k+1})}{f(s^{1}, \theta^{1}) - \fStar} \le \zeta_3$, then by \eqref{eq:generic-gap-to-progress-relation} and
$v^k - v^{k+1} = f(s^{k},\theta^k) - f(s^{k+1},\theta^{k+1})$ we deduce
$$
\frac{( v^k )^{\zeta_1+1}}{\zeta_2} \le v^k - v^{k+1} \Rightarrow v^{k+1} \le v^{k} \left(1 - \frac{ \left(v^k \right)^{\zeta_1}}{\zeta_2}  \right).
$$
Dividing both sides by $v^{k+1} (v^k)^{\zeta_1}$ and using that $v^{k+1} \le v^{k}$ yields
\begin{flalign}
\label{eq:vk-inverse-bound}
\frac{1}{(v^{k})^{\zeta_1}} \le \frac{v^k}{v^{k+1}} \left( \frac{1}{(v^{k})^{\zeta_1-1}} - \frac{1}{\zeta_2}  \right) \le \frac{1}{(v^{k+1})^{\zeta_1}} - \frac{1}{\zeta_2}.
\end{flalign}
Furthermore, if $\frac{f(s^{k}, \theta^{k}) - f(s^{k+1}, \theta^{k+1})}{f(s^{1}, \theta^{1}) - \fStar} \ge \zeta_3$ then 
\begin{flalign}
v^{k+1} \le v^{k} - \frac{f(s^{1}, \theta^{1}) - \fStar}{\zeta_3}
\end{flalign}
and this can happen at most $\zeta_3$ times. Therefore if $K > \zeta_3$ then
$$
\frac{1}{(v^{K})^{\zeta_1}} \ge \frac{1}{(v^{K})^{\zeta_1}} - \frac{1}{(v^{1})^{\zeta_1}} = \sum_{k=1}^{K-1}\frac{1}{(v^{k+1})^{\zeta_1}} - \frac{1}{(v^{k})^{\zeta_1}} \ge \frac{K - \zeta_3}{\zeta_2}
$$
where the first inequality uses that $v_1 \ge 0$, and second inequality uses \eqref{eq:vk-inverse-bound}.
Rearranging gives the result.
\end{proof}

\begin{proof}[Proof of Theorem~\ref{thm:simple-psi-minimization}] 
Define \begin{flalign*}
\zeta_1 &= 1 \\
\zeta_2 &= L \left( D_s \sqrt{2} + 2 \frac{c}{\gamma} \right)^2 \\
\zeta_3 &= \frac{2\Delta(s^{1}, \theta^{1})}{L}.
\end{flalign*}

Lemma~\ref{lem:progress-relationship-to-gap} shows that if $\delta_L(s^k, \theta^{k}) \le \Delta(s^{1}, \theta^{1}) / \zeta_3$ then
$\Delta(s^{k}, \theta^{k})^2 \le \zeta_2 \delta_L(s^k, \theta^{k})$.
Furthermore, $\delta_L(s^k, \theta^{k}) \le  \psi_0(s^{k}, \theta^{k}) - \psi_0(s^{k+1}, \theta^{k+1}) = f(s^k,z^k) - f(s^{k+1},z^{k+1})$ by line~\ref{algline:optstep} of Algorithm~\ref{alg:PGD} and the definition of $\psi_0$ respectively. Combining these two inequalities yields
$(f(s^k,z^k) -f_{*})^2 \le \zeta_2 (f(s^k,z^k) - f(s^{k+1},z^{k+1}))$. Applying Lemma~\ref{lem:generic-gradient-descent-proof} to the latter inequality yields the result.
\end{proof}

\section{Escaping the basin of a local minimizer}\label{app:escaping-the-basin-of-a-local-minimizer}

For this section, we consider an alternative  set to $\mathcal{K}_{\gamma}$:
\begin{flalign*}
\mathcal{C}(s, \theta, q) := \Bigg\{  i : \theta_i  \left( \frac{\partial \psi_i}{\partial z_i} \right)^{+} + (1 - \theta_i) \left( \frac{\partial \psi_i}{\partial z_i}\right)^{-} > 2 q (
\eta_i - \mu_i) \Bigg\}
\end{flalign*}
this set identifies the indices where the degeneracy could cause a convergence issue. We then modify \callSimplePsiMin{} (yielding \callSafePsiMin{}) to detect when the set $C(s,\theta,q)$ is empty and take appropriate action, i.e., call \callFixDeg{}.
\callFixDeg{} extends \callEscapeExactLocalMin{} to allow us to escape from the basin of a local minimizer.
To see this, note that if $(s,\theta)$ is fixed then setting $q$ sufficiently large will cause \callFixDeg{$s,\theta,q$} to reduce to \callEscapeExactLocalMin{$s,\theta
$}. However, the value of $q$ needed to achieve this could be arbitrarily large.

\begin{algorithm}[H]
\caption{Local search algorithm that will find the global minimizer of $\psi_0$.}\label{alg:safe-PGD}
\begin{algorithmic}[1]
\Function{\stateSafePsiMin}{$s^{1}, \theta^{1}, L, q^{1}, \epsilon$}
\For{$k = 1, \dots, \infty$}
\State \textit{Take corrective action if degeneracy is an issue:}
\If{$\mathcal{C}(s^{k}, \theta^{k}, q^{k}) \neq \emptyset$}\label{line:if-C-is-empty}
\State $(s^{k}, \hat{\theta}^{k}, \textbf{status}) \gets \callFixDeg{s^{k}, \theta^{k}, q^{k}}$
\If{$\textbf{status} = \textsc{FAILURE}$}
\State $q^{k+1} \gets 10 q^{k}$
\EndIf
\Else \label{line:else-C-is-empty}
\State $\hat{\theta}^{k} \gets \theta^{k}$
\EndIf \label{line:end-if-C-is-empty}
\State \textit{Termination checks:}
\If{$\Delta(s^{k}, \hat{\theta}^{k}) \le \epsilon$}
\State \Return $(s^{k}, \theta^{k})$
\EndIf
\State \textit{Reduce the function at least as much as PGD would:}
\State $(s^{k+1}, \theta^{k+1}) \in \{(s,\theta) : \psi_0(s, \theta) \le \psi_0(s^{k}, \hat{\theta}^{k}) - \delta_{\sm}(s^{k}, \hat{\theta}^k) \}$
\EndFor
\EndFunction
\end{algorithmic}
\end{algorithm}

\begin{algorithm}[H]
\caption{Algorithm for fixing convergence issues in degenerate case}\label{alg:fix-deg}
\begin{algorithmic}[1]
\Function{\stateFixDeg}{$s, \theta, q$}
\State $z = \Forward{s, \theta}$
\State $\hat{\theta} \gets \text{copy}(\theta)$
\State \emph{The minimum reduction in $\psi_0$ if $q \ge Q$:}
\State $v \gets 0$
\State
\For{$i = n, \dots, 1$}
\State \emph{Approximately compute $\frac{\partial \psi_i(s, z_{1:i}, \hat{\theta}_{i+1:n})}{\partial z_i}$:}
\State $g_i \gets \frac{\partial f}{\partial z_{i}} + \sum_{j = i + 1}^{n} g_{j} \left( \hat{\theta}_{j} \frac{\partial \eta_{j}}{\partial z_i} + ( 1 - \hat{\theta}_{j}) \frac{\partial \mu_{j}}{\partial z_i} \right)$
\State \emph{Estimate the distance that $z$ has moved:}
\State $\omega_i \gets \sum_{j = i+1}^{n} \abs{\theta_j - \hat{\theta}_j} (\eta_j - \mu_j)$
\State $p_i \gets g_i^{+} \theta_i + g_{i}^{-} (1 - \theta_i)$ \label{line:define-p-i}
\If{$p_i > 2 q (\omega_i + \eta_i - \mu_i)$}
\State \emph{Fix degeneracy in index $i$:}
\State $
\hat{\theta}_{i} \gets \begin{cases}
0 & g_i > 0 \\
1 & g_i < 0
\end{cases}
$ \label{line:choose-theta-hat}
\State $v \gets v + \frac{1}{2} p_i (\eta_i - \mu_i)$
\EndIf
\EndFor
\State 
\State $\omega_0 \gets \sum_{j = 1}^{n} \abs{\theta_j - \hat{\theta}_j} (\eta_j - \mu_j)$ \label{line:omega-0-def}
\If{$\psi_0(s, \hat{\theta}) > \psi_0(s, \theta) - v$} \label{line:sufficient-progress}
\State \Return $(s, \theta, \textsc{FAILURE})$
\ElsIf{$\abs{g_i - \frac{\partial \psi_i(s,\hat{z}_{1:i},\hat{\theta}_{i+1:n})}{\partial \hat{z}_i}} \le q \omega_0, \forall i$} \label{line:q-omega-1-bound}
\State \Return $(s, \hat{\theta}, \textsc{SUCCESS})$
\Else
\State \Return $(s, \hat{\theta}, \textsc{FAILURE})$
\EndIf \label{line:end-algorithm}
\EndFunction
\end{algorithmic}
\end{algorithm}

\begin{remark}
With careful implementation the cost of running \callFixDeg{} is the same as one backpropagation.
\end{remark}

\begin{remark}
If $\mathcal{C}(s,\theta,q) = \emptyset$ then $(s, \hat{\theta}, \textbf{status}) \gets \callFixDeg{s, \theta, q}$ satisfies $\hat{\theta} = \theta$ and $\textbf{status} = \textsc{SUCCESS}$. In other words, removing Line~\ref{line:if-C-is-empty} and Line~\ref{line:else-C-is-empty}-\ref{line:end-if-C-is-empty} of \callSafePsiMin{} does not change the behaviour of the Algorithm (although it may create unnecessary computation).
\end{remark}

This section introduces two new assumptions (Assumption~\ref{assume:g-i-predict} and \ref{assume:eta-mu-smooth-bound}). 
We defer justifying these assumptions to Section~\ref{sec:justifying-assumptions} where we show that Assumptions~\ref{assume:smooth}, \ref{assume:bounded-set}, and \ref{assume:inductive} imply that these introduced assumptions hold.

\begin{assumption}\label{assume:g-i-predict}
Let $z \gets \Forward{s, \theta}$, $$
g_i = \frac{\partial f}{\partial z_{i}} + \sum_{j = i + 1}^{n} g_{j} \left( \hat{\theta}_{j} \frac{\partial \eta_{j}}{\partial z_i} + ( 1 - \hat{\theta}_{j}) \frac{\partial \mu_{j}}{\partial z_i} \right),
$$
and $\hat{z} \gets \Forward{s, \hat{\theta}}$. Then there exists a constant $Q > 0$ such that
$$
\abs{g_i - \frac{\partial \psi_i(s, \hat{z}_{1:i}, \hat{\theta}_{i+1:n})}{\partial \hat{z}_i}} \le Q \sum_{i=1}^{n} \abs{\theta_i - \hat{\theta}_i} \abs{\eta_i - \mu_i}.
$$
\end{assumption}

\begin{lemma}\label{lem:fix-deg-succeeds}
Suppose Assumption~\ref{assume:inductive} and~\ref{assume:g-i-predict} holds.
If $q \ge Q$ then $(s, \hat{\theta}, \textbf{status}) \gets \callFixDeg{s, \theta, q}$ has $\textbf{status} = \textsc{SUCCESS}$.
\end{lemma}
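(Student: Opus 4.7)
The plan is to verify that when $q \ge Q$, neither of the two branches that would cause \callFixDeg{} to return \textsc{FAILURE} is taken, so the function must terminate on the \textsc{SUCCESS} branch. The first branch is essentially a restatement of Assumption~\ref{assume:g-i-predict}; the harder part is the sufficient-progress check on Line~\ref{line:sufficient-progress}.

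First I would dispatch the gradient-approximation condition on Line~\ref{line:q-omega-1-bound}. The definition of $\omega_0$ on Line~\ref{line:omega-0-def} matches the right-hand side of Assumption~\ref{assume:g-i-predict} exactly (up to the constant $Q$), i.e.\ $\omega_0 = \sum_{j=1}^{n}|\theta_j - \hat{\theta}_j|(\eta_j - \mu_j)$. Hence Assumption~\ref{assume:g-i-predict} yields $|g_i - \partial \psi_i(s,\hat{z}_{1:i},\hat{\theta}_{i+1:n})/\partial \hat{z}_i| \le Q\omega_0 \le q\omega_0$ for every $i$, so this check never triggers \textsc{FAILURE} when $q \ge Q$.

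For the sufficient-progress condition, I would introduce the hybrid iterates $\theta^{(i)} := (\theta_{1:i-1}, \hat{\theta}_{i:n})$, so that $\theta^{(n+1)} = \theta$ and $\theta^{(1)} = \hat{\theta}$, and telescope $\psi_0(s,\hat{\theta}) - \psi_0(s,\theta) = \sum_{i=1}^{n}[\psi_0(s,\theta^{(i)}) - \psi_0(s,\theta^{(i+1)})]$. For indices $i$ that are not updated in the \textbf{if} branch on Line~\ref{line:choose-theta-hat}, $\hat{\theta}_i = \theta_i$ and the corresponding summand vanishes. For updated indices, the prefixes $z_{1:i-1}$ agree under $\theta^{(i)}$ and $\theta^{(i+1)}$ because they depend only on $\theta_{1:i-1}$, so by the recursive definition of $\psi_{i-1}$ the summand reduces to the change in $\psi_i(s, z_{1:i-1}, \cdot, \hat{\theta}_{i+1:n})$ as its $i$th argument moves from the value $(1-\theta_i)\mu_i + \theta_i\eta_i$ to $(1-\hat{\theta}_i)\mu_i + \hat{\theta}_i\eta_i$. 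By the mean value theorem and the identity $\partial \psi_{i-1}/\partial \theta_i = (\partial \psi_i/\partial z_i)(\eta_i - \mu_i)$, this change equals $(\hat{\theta}_i - \theta_i)(\eta_i - \mu_i)\, y_i^{\mathrm{avg}}$ for some average derivative $y_i^{\mathrm{avg}}$ along the segment. Because $\hat{\theta}_i \in \{0,1\}$ is chosen on Line~\ref{line:choose-theta-hat} to oppose the sign of $g_i$, if $g_i$ coincided with $y_i^{\mathrm{avg}}$ the summand would equal exactly $-p_i(\eta_i - \mu_i)$.

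The main obstacle is controlling $|g_i - y_i^{\mathrm{avg}}|$. Applying Assumption~\ref{assume:g-i-predict} in the form appropriate for the partial iterate $\theta^{(i)}$ rather than the fully updated $\hat{\theta}$ produces an error of order $Q\omega_i$, where $\omega_i$ from Line~\ref{line:define-p-i} tracks $\sum_{j>i}|\theta_j - \hat{\theta}_j|(\eta_j - \mu_j)$ — exactly the drift introduced by earlier (higher index) updates. An additional linearisation term coming from sweeping $\theta_i$ over an interval of length at most one (with gradient varying by at most $Q(\eta_i - \mu_i)$ over the resulting $z_i$-interval) contributes a further $Q(\eta_i - \mu_i)$. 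The threshold $p_i > 2q(\omega_i + \eta_i - \mu_i)$ on Line~\ref{line:define-p-i} is calibrated precisely so that, because $q \ge Q$, the combined error is at most half of the nominal gain $p_i(\eta_i - \mu_i)$. Summing the surviving $\tfrac{1}{2}p_i(\eta_i - \mu_i)$ over the updated indices reproduces $v$, giving $\psi_0(s,\hat{\theta}) \le \psi_0(s,\theta) - v$, so the check on Line~\ref{line:sufficient-progress} passes and the function returns \textsc{SUCCESS}.
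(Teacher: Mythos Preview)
Your proposal is correct and follows essentially the same approach as the paper: dispatch Line~\ref{line:q-omega-1-bound} directly from Assumption~\ref{assume:g-i-predict}, telescope $\psi_0(s,\hat\theta)-\psi_0(s,\theta)$ over the hybrid iterates, write each nonzero summand as an integral (or, equivalently, via the mean-value theorem) of $(\partial\psi_i/\partial z_i)(\eta_i-\mu_i)$, control the discrepancy $|g_i-\partial\psi_i/\partial z_i|$ by Assumption~\ref{assume:g-i-predict}, and use the threshold $p_i>2q(\omega_i+\eta_i-\mu_i)$ to show each updated term contributes at most $-\tfrac12 p_i(\eta_i-\mu_i)$. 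If anything, you are slightly more careful than the paper in explicitly splitting the error into the $Q\omega_i$ contribution from the previously updated indices $j>i$ and the additional $Q(\eta_i-\mu_i)$ contribution from sweeping $\theta_i$ itself; the paper's displayed chain uses only $Q\omega_i$ at that step, but the stronger threshold $p_i>2q(\omega_i+\eta_i-\mu_i)$ absorbs the extra term in either presentation.
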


\begin{proof}
By Lines~\ref{line:sufficient-progress} to \ref{line:end-algorithm} of \callFixDeg{} we can see for $\textbf{status} = \textsc{SUCCESS}$ we need both
\begin{flalign}\label{proof:bound-g-i-minus-partial-psi-i}
\abs{g_i - \frac{\partial \psi_i(s,\hat{z}_{1:i},\hat{\theta}_{i+1:n})}{\partial \hat{z}_i}} \le q \omega_0, \forall i
\end{flalign}
and
\begin{flalign}\label{proof:bound-progress-by-v}
\psi_0(s, \hat{\theta}) \le \psi_0(s, \theta) - v.
\end{flalign}
We establish each of these in turn.
First observe that
\eqref{proof:bound-g-i-minus-partial-psi-i}holds immediately by Assumption~\ref{assume:g-i-predict} and definition of $\omega_0$ in line~\ref{line:omega-0-def} of \callFixDeg{}.

Next we show \eqref{proof:bound-progress-by-v} by bounding each of the right hand side terms in the equality
\begin{flalign}\label{eq:sum-of-psi}
\psi_0(s, \hat{\theta}) - \psi_0(s,\theta) =  \sum_{i=1}^{n} \psi_0(s, \theta_{1:i-1}, \hat{\theta}_{i:n}) - \psi_0(s, \theta_{1:i}, \hat{\theta}_{i+1:n}).
\end{flalign}

Recall the definition of $p_i$, $\omega_i$, and $\hat{\theta}_i$ from \callFixDeg{}. If $p_i > 2 q (\omega_i + \eta_i - \mu_i)$ then
\begin{flalign}
&\psi_0(s, \theta_{1:i-1}, \hat{\theta}_{i:n}) - \psi_0(s, \theta_{1:i}, \hat{\theta}_{i+1:n}) \notag \\
&= \int_{\theta_i}^{\hat{\theta}_i} \frac{\partial \psi_0(s, \theta_{1:i-1}, \gamma, \hat{\theta}_{i+1:n})}{\partial \gamma} \partial \gamma \notag \\
&= \int_{\theta_i}^{\hat{\theta}_i} 
\frac{\partial \psi_{i}(s, z_{1:i-1}, t, \hat{\theta}_{i+1:n})}{\partial t}(\eta_i - \mu_i) \partial t
& \text{by \eqref{eq:grad-psi-theta}}
\notag \\
&= \int_{\theta_i}^{\hat{\theta}_i} g_i (\eta_i - \mu_i) \partial t
+ \int_{\theta_i}^{\hat{\theta}_i} 
\left( \frac{\partial \psi_{i}(s, z_{1:i-1}, t, \hat{\theta}_{i+1:n})}{\partial t} - g_i\right) (\eta_i - \mu_i) \partial t \notag \\
&\le g_i (\eta_i - \mu_i) \int_{\theta_i}^{\hat{\theta}_i}  \partial t
+ Q \left( \sum_{j = i+1}^{n} \abs{\theta_j - \hat{\theta}_j} (\eta_j - \mu_j) \right) (\eta_i - \mu_i) \abs{ \int_{\theta_i}^{\hat{\theta}_i}  \partial t } 
& \text{by Assumption~\ref{assume:g-i-predict}}
\notag \\
&= g_i (\eta_i - \mu_i) \int_{\theta_i}^{\hat{\theta}_i}  \partial t
+ Q \omega_i (\eta_i - \mu_i) \abs{ \int_{\theta_i}^{\hat{\theta}_i}  \partial t } 
& \text{by definition of $\omega_i$}
\notag \\
&=  g_i (\eta_i - \mu_i) (\hat{\theta}_i - \theta_i) +  Q \omega_i  (\eta_i - \mu_i) \abs{\hat{\theta}_i -  \theta_i} \notag \\
&\le  g_i (\eta_i - \mu_i) (\hat{\theta}_i - \theta_i) +  q \omega_i  (\eta_i - \mu_i) \abs{\hat{\theta}_i -  \theta_i}
& \text{by $Q \le q$}
\notag \\
&\le  g_i (\eta_i - \mu_i) (\hat{\theta}_i - \theta_i) +  q \omega_i  (\eta_i - \mu_i)
& \text{as $\theta_i, \hat{\theta}_i \in [0,1]$}
\notag \\
&= (\eta_i - \mu_i) (q \omega_i - p_i) &\text{by definition of $p_i$ and $\hat{\theta}_i$}
\notag \\
&\le - \frac{p_i (\eta_i - \mu_i)}{2},
\label{eq:delta-psi-p-i-large}
\end{flalign}
where the last inequality uses $p_i > 2 q \omega_i$.

If $p_i \le 2 q (\omega_i + \eta_i - \mu_i)$ then
\begin{flalign}
\label{eq:psi-delta-p-i-small}
\psi_0(s, \theta_{1:i-1}, \hat{\theta}_{i:n}) - \psi_0(s, \theta_{1:i}, \hat{\theta}_{i+1:n}) = 0
\end{flalign}
Therefore, by \eqref{eq:sum-of-psi}, \eqref{eq:delta-psi-p-i-large}, \eqref{eq:psi-delta-p-i-small}, and definition of $v$ we establish
\eqref{proof:bound-progress-by-v}.
\end{proof}

\begin{assumption}
\label{assume:eta-mu-smooth-bound}
Denote $z = \Forward{s, \theta}$ and $\hat{z} = \Forward{s,\hat{\theta}}$.
There exists a constant $P > 0$ such that
$$
\abs{ \eta_i - \mu_i - (\hat{\eta}_i - \hat{\mu}_i) } \le P  \sum_{i=1}^{n}  \abs{\theta_i - \hat{\theta}_i} \abs{\eta_i - \mu_i},
$$
$\forall s \in S, \forall \theta, \hat{\theta} \in [0,1]^{n}$, $\forall i \in \{1, \dots, n \}$ with $\hat{\eta}_i := \eta_i(s,\hat{z}_{1:i-1})$ and $\hat{\mu}_i := \mu(s,\hat{z}_{1:i-1})$.
\end{assumption}

\begin{lemma}\label{lem:bound-complementarity-after-fix-deg}
Suppose that Assumption~\ref{assume:inductive} and \ref{assume:eta-mu-smooth-bound} hold.
Let $(s, \hat{\theta}, \textbf{status}) \gets \callFixDeg{s, \theta, q}$, and $\hat{z} = \Forward{s, \hat{\theta}}$.
If $\textbf{status} = \textsc{SUCCESS}$ then
\begin{flalign}
\label{eq:bound-partial-psi}
\hat{\theta}_i \left( \frac{\partial \psi_i(s, \hat{z}_{1:i}, \hat{\theta}_{i+1:n})}{\partial \hat{z}_i} \right)^{+} + (1 - \hat{\theta}_i) \left( \frac{\partial \psi_i(s, \hat{z}_{1:i}, \hat{\theta}_{i+1:n})}{\partial \hat{z}_i} \right)^{-} \le q ( (3 + P) \omega_0 + \hat{\eta}_i - \hat{\mu}_i )
\end{flalign}
and $\omega_0 \le \sqrt{2 v / q}$.
\end{lemma}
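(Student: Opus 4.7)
The plan is to prove the two claims separately: the pointwise bound on the complementarity quantity, and the bound $\omega_0 \le \sqrt{2v/q}$ on the total displacement.

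For the complementarity bound, denote $\hat{\psi}'_i := \partial \psi_i(s, \hat{z}_{1:i}, \hat{\theta}_{i+1:n})/\partial \hat{z}_i$. If $\hat{\psi}'_i \le 0$, then the left-hand side equals $(1-\hat{\theta}_i)\hat{\psi}'_i \le 0$, which is dominated by the non-negative right-hand side. Otherwise the LHS equals $\hat{\theta}_i \hat{\psi}'_i$, and I would split on whether the fix was triggered at index $i$ inside \callFixDeg{s,\theta,q}. If triggered, then $p_i > 0$ forces $g_i > 0$, so line~\ref{line:choose-theta-hat} sets $\hat{\theta}_i = 0$ and the LHS vanishes. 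If not triggered, $\hat{\theta}_i = \theta_i$ and $p_i \le 2q(\omega_i + \eta_i - \mu_i)$; combining the SUCCESS check at line~\ref{line:q-omega-1-bound} (which gives $|g_i - \hat{\psi}'_i| \le q\omega_0$), the inequality $\omega_i \le \omega_0$, and Assumption~\ref{assume:eta-mu-smooth-bound} (which gives $\eta_i - \mu_i \le \hat{\eta}_i - \hat{\mu}_i + P\omega_0$) yields an upper bound of the claimed form.

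For the bound $\omega_0 \le \sqrt{2v/q}$, set $c_i := |\theta_i - \hat{\theta}_i|(\eta_i - \mu_i)$, and note that $c_i = 0$ whenever the fix was \emph{not} triggered at $i$, since then $\hat{\theta}_i = \theta_i$. The key identity is
$$
\omega_0^2 = \Bigl(\sum_i c_i\Bigr)^2 = \sum_i c_i^2 + 2\sum_{i<j} c_i c_j = \sum_i c_i\Bigl( c_i + 2\sum_{j>i} c_j \Bigr) = \sum_i c_i(c_i + 2\omega_i),
$$
where the last equality uses $\omega_i = \sum_{j>i} c_j$, which holds because non-triggered indices contribute zero. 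For triggered $i$, $|\theta_i - \hat{\theta}_i| \le 1$ gives $c_i \le \eta_i - \mu_i$, so each summand satisfies $c_i(c_i + 2\omega_i) \le 2(\eta_i - \mu_i)(\omega_i + \eta_i - \mu_i)$. The triggering condition then gives $\omega_i + \eta_i - \mu_i < p_i/(2q)$, so each summand is at most $p_i(\eta_i - \mu_i)/q$. Summing over triggered indices and comparing with $v = \sum \frac{1}{2} p_i(\eta_i - \mu_i)$ yields $\omega_0^2 < 2v/q$.

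The main obstacle will be the careful constant tracking in the complementarity bound, and in particular managing the distinction between quantities evaluated at $z = \Forward{s,\theta}$ (as they appear inside \callFixDeg{s,\theta,q}) and at $\hat{z} = \Forward{s,\hat{\theta}}$ (as they appear in the lemma's conclusion). Assumption~\ref{assume:eta-mu-smooth-bound} and the SUCCESS check are precisely the tools bridging this gap. The identity expanding $\omega_0^2$ in terms of the $\omega_i$'s is the central trick that turns what superficially looks like a Cauchy-Schwarz bound (which would carry a factor depending on the number of triggered indices) into the dimension-free bound $\sqrt{2v/q}$.
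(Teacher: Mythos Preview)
Your proposal is correct and follows essentially the same route as the paper. For $\omega_0 \le \sqrt{2v/q}$, your expansion $\omega_0^2 = \sum_i c_i(c_i + 2\omega_i)$ with $c_i = \omega_{i-1}-\omega_i$ is exactly the paper's use of the identity $(\sum_i \delta_i)^2 \le 2\sum_i\sum_{j\ge i}\delta_i\delta_j$ (stated there as a separate Fact) applied to $\delta_i = \omega_{i-1}-\omega_i$; both then bound each summand via the triggering threshold $p_i > 2q(\omega_i + \eta_i - \mu_i)$.

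One caveat on the complementarity bound. Your dismissal of the case $\hat{\psi}'_i \le 0$ as giving a nonpositive left-hand side, and your inference ``$p_i > 0$ forces $g_i > 0$'', both rely on the convention $(\cdot)^- := \min\{\cdot,0\}$. The paper is inconsistent about this, but the convention actually used for $p_i$, $r_i$, and in the downstream Lemma~\ref{lem:bound-r-j-advanced} is $(\cdot)^- := \max\{-\cdot,0\}$, under which neither of your two steps holds as written. The paper avoids the issue by first bounding
\[
\hat{\theta}_i(\hat{\psi}'_i)^+ + (1-\hat{\theta}_i)(\hat{\psi}'_i)^- \;\le\; \hat{\theta}_i g_i^+ + (1-\hat{\theta}_i) g_i^- + q\omega_0
\]
uniformly in sign (from $|g_i - \hat{\psi}'_i| \le q\omega_0$), and then observing that in the triggered case the choice of $\hat{\theta}_i$ always gives $\hat{\theta}_i g_i^+ + (1-\hat{\theta}_i) g_i^- = 0$, regardless of the sign of $g_i$. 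Your case structure is easily patched: handle $\hat{\psi}'_i < 0$ symmetrically (triggered $\Rightarrow \hat{\theta}_i = 1$ when $g_i<0$; otherwise the SUCCESS bound already gives $|\hat{\psi}'_i| \le q\omega_0$) rather than discarding it.
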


\begin{proof}
Denote $\hat{\psi}_i := \psi_i(s,\hat{z}_{1:i},\hat{\theta}_{i+1:n})$.
From Line~\ref{line:q-omega-1-bound} of \callSafePsiMin{}
\begin{flalign}\label{eq:tolerance-on-g-i}
\abs{g_i - \frac{\partial \hat{\psi}_i}{\partial \hat{z}_i}} \le q \omega_0.
\end{flalign}
Therefore,
\begin{flalign}
\label{eq:bound-psi-hat-by-g-i-q-omega-0}
\hat{\theta}_i \left( \frac{\partial \hat{\psi}_i}{\partial \hat{z}_i} \right)^{+} + (1 - \hat{\theta}_i) \left( \ \frac{\partial \hat{\psi}_i}{\partial \hat{z}_i} \right)^{-} \le g_i^{+} \hat{\theta}_i + g_{i}^{-} ( 1 - \hat{\theta}_i ) + q \omega_0
\end{flalign}
where the inequality uses \eqref{eq:tolerance-on-g-i} and $\hat{\theta}_i \in [0,1]$.
From Line~\ref{line:define-p-i}-\ref{line:choose-theta-hat} if $p_i > 2 q (\omega_i + \eta_i - \mu_i)$ then
$$
g_i^{+} \hat{\theta}_i + g_{i}^{-} ( 1 - \hat{\theta}_i ) = 0 \Rightarrow \hat{\theta}_i \left(  \frac{\partial \hat{\psi}_i}{\partial \hat{z}_i} \right)^{+} + (1 - \hat{\theta}_i) \left(  \frac{\partial \hat{\psi}_i}{\partial \hat{z}_i} \right)^{-} \le q \omega_0
$$
where the implication uses  \eqref{eq:bound-psi-hat-by-g-i-q-omega-0}.
On the other hand, if 
$$
g_i^{+} \theta_i + g_{i}^{-} (1 - \theta_i) = p_i \le 2 q (\omega_i + \eta_i - \mu_i)
$$ 
then by $\hat{\theta}_i = \theta_i$ and \eqref{eq:bound-psi-hat-by-g-i-q-omega-0},
\begin{flalign}\label{eq:bound-partial-psi-simple}
\hat{\theta}_i \left( \frac{\partial \hat{\psi}_i}{\partial \hat{z}_i} \right)^{+} + (1 - \hat{\theta}_i) \left( \ \frac{\partial \hat{\psi}_i}{\partial \hat{z}_i} \right)^{-} \le p_i + q \omega_0 = q (3 \omega_0 + \eta_i - \mu_i).
\end{flalign}
Furthermore, by Assumption~\ref{assume:eta-mu-smooth-bound}
\begin{flalign*}
\abs{ \eta_i - \mu_i - (\hat{\eta}_i - \hat{\mu}_i) } \le P \omega_0 
\end{flalign*}
which combined with \eqref{eq:bound-partial-psi-simple} yields \eqref{eq:bound-partial-psi}.

It remains to show that $\omega_0 \le \sqrt{2 v / q}$.
Note that
\begin{flalign}
\label{eq:bound-eta-minus-mu-by-difference-omega-i}
\eta_i - \mu_i \ge \abs{\theta_i - \hat{\theta}_i} (\eta_i - \mu_i) = \omega_{i-1} - \omega_{i}
\end{flalign}
where the inequality uses $\theta_i, \hat{\theta}_i \in [0,1]$.
Define $\mathcal{I} := \{ i \in \{1, \dots, n \} : \theta_{i} \neq \hat{\theta}_i \}$.
Finally,
\begin{flalign*}
v &= \frac{1}{2} \sum_{i \in \mathcal{I}} (\eta_{i} - \mu_i) p_i 
& \text{by definition of $v$} \\
&\ge \frac{1}{2} \sum_{i \in \mathcal{I}} (\omega_{i-1} - \omega_{i})  p_i 
& \text{by \eqref{eq:bound-eta-minus-mu-by-difference-omega-i}} \\
&> q \sum_{i \in \mathcal{I}} (\omega_{i-1} - \omega_{i}) \omega_{i-1}
& \text{by $p_i > 2 q(\omega_i + \eta_i - \mu_i) > 2 \omega_i$ for $i \in \mathcal{I}$}\\
&= q \sum_{i = 1}^{n} (\omega_{i-1} - \omega_{i}) \omega_{i-1} 
& \text{by $\omega_{i-1} - \omega_{i}$ for $i \not\in \mathcal{I}$}\\
&= q \sum_{i=1}^{n} \sum_{j=i}^n (\omega_{i-1} - \omega_{i}) (\omega_{j-1} - \omega_{j})  \\
&\ge \frac{q (\omega_0 - \omega_n)^2}{2} & \text{by Fact~\ref{fact:delta-i-delta-j}} \\
&= \frac{q \omega_0^2}{2} & \text{since $\omega_n = 0$}.
\end{flalign*}
Rearranging this inequality gives the result.
\end{proof}

\begin{fact}\label{fact:delta-i-delta-j}
Let $\delta_1, \dots, \delta_n \in \R$ then
$$
\left(\sum_{i=1}^{n} \delta_i \right)^2 \le 2 \sum_{i=1}^{n} \sum_{j=i}^{n} \delta_i \delta_j.
$$
\end{fact}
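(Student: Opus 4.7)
The plan is a direct algebraic expansion. The key identity I would use is that the diagonal of an $n \times n$ matrix is counted in both the upper and lower triangles, so summing over $i \le j$ and over $j \le i$ double counts the diagonal. Concretely, I would observe
\begin{flalign*}
2 \sum_{i=1}^{n} \sum_{j=i}^{n} \delta_i \delta_j
&= \sum_{i=1}^{n} \sum_{j=i}^{n} \delta_i \delta_j + \sum_{j=1}^{n} \sum_{i=j}^{n} \delta_i \delta_j \\
&= \sum_{i=1}^{n} \sum_{j=1}^{n} \delta_i \delta_j + \sum_{i=1}^{n} \delta_i^2 \\
&= \left( \sum_{i=1}^{n} \delta_i \right)^2 + \sum_{i=1}^{n} \delta_i^2,
\end{flalign*}
where the second equality splits the full double sum $\sum_{i,j} \delta_i\delta_j$ as the sum of $\{i\le j\}$ and $\{i\ge j\}$ and accounts for the diagonal being counted twice. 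Rearranging gives
$$
2 \sum_{i=1}^{n} \sum_{j=i}^{n} \delta_i \delta_j - \left( \sum_{i=1}^{n} \delta_i \right)^2 = \sum_{i=1}^{n} \delta_i^2 \ge 0,
$$
which is exactly the claimed inequality.

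There is essentially no obstacle here: the fact is a one-line consequence of expanding $(\sum_i \delta_i)^2$ and noting that the squared terms $\delta_i^2$ appear with coefficient $2$ on the right-hand side but only coefficient $1$ on the left. An equivalent route would be to write the right-hand side as $\sum_i \delta_i^2 + 2 \sum_{i<j} \delta_i\delta_j + \sum_i \delta_i^2$ and the left-hand side as $\sum_i \delta_i^2 + 2\sum_{i<j}\delta_i\delta_j$, with the difference being $\sum_i \delta_i^2 \ge 0$. Either presentation makes the bound tight precisely when all $\delta_i = 0$, which is consistent with its use in the preceding lemma where it is applied to the nonnegative telescoping differences $\delta_i = \omega_{i-1} - \omega_i$.
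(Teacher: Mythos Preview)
Your proof is correct and is essentially the same direct algebraic expansion as the paper's: both arguments rest on splitting the full double sum $\sum_{i,j}\delta_i\delta_j$ into diagonal plus two symmetric triangles and observing that the right-hand side carries an extra $\sum_i \delta_i^2$. The only cosmetic difference is that the paper starts from $(\sum_i\delta_i)^2$ and rewrites it as $-\sum_i\delta_i^2 + 2\sum_{i\le j}\delta_i\delta_j$, whereas you start from $2\sum_{i\le j}\delta_i\delta_j$ and arrive at $(\sum_i\delta_i)^2 + \sum_i\delta_i^2$.
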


\begin{proof}
It follows by
\begin{flalign*}
\left(\sum_{i=1}^{n} \delta_i \right)^2 =  \sum_{i=1}^{n} \sum_{j=1}^{n} \delta_i \delta_j &= \sum_{i=1}^{n} \delta_i^2 + \sum_{i=1}^{n} \sum_{j=i+1}^{n} \delta_i \delta_j + \sum_{i=1}^{n} \sum_{j=1}^{i-1} \delta_i \delta_j\\
 &= \sum_{i=1}^{n} \delta_i^2 + 2 \sum_{i=1}^{n} \sum_{j=i+1}^{n} \delta_i \delta_j \\
&= -\sum_{i=1}^{n} \delta_i^2 + 2 \sum_{i=1}^{n} \sum_{j=i}^{n} \delta_i \delta_j.
\end{flalign*}
\end{proof}

\begin{lemma}\label{lem:bound-r-j-advanced}
Suppose that Assumption~\ref{assume:inductive} and \ref{assume:eta-mu-smooth-bound} hold.
Let $(s, \hat{\theta}, \textbf{status}) \gets \callFixDeg{s, \theta, q}$, and $\hat{z} = \Forward{s, \hat{\theta}}$.
If $\textbf{status} = \textsc{SUCCESS}$ then for all $i \in \{1, \dots, n \}$,
$$
\hat{r}_i \le \sqrt{q} \left(  (3 + P) \sqrt{v} +  \max\left\{\sqrt[4]{2 \sm \hatGain{i,\sm}},  \sqrt{2 \hatGain{i,\sm}} \right\} \right).
$$
with $\hat{y}_i :=  \frac{\partial \psi_{i}(s, \hat{z}_{1:i},\hat{\theta}_{i+1:n})}{\partial \hat{z}_{i}}$, $\hat{r}_i := \hat{\theta}_i \hat{y}^{+}_i +  (1 - \hat{\theta}_i) \hat{y}^{-}_i$, and $\hatGain{i,\sm} :=  - \minimize{\hat{\theta}_{i} + d_i \in [0,1]}{~\frac{\partial \psi_{i}(s, \hat{z}_{1:i},\hat{\theta}_{i+1:n})}{\partial \hat{\theta}_i} d_i + \frac{\sm}{2} d_i^2}$.
\end{lemma}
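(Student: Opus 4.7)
The plan is to combine two complementary bounds on $\hat{r}_i$: when $\hat{\eta}_i - \hat{\mu}_i$ is large, Lemma~\ref{lem:r-i-bound}'s argument applied at the post-\callFixDeg{} point controls $\hat{r}_i$, and when $\hat{\eta}_i - \hat{\mu}_i$ is small, the complementarity bound \eqref{eq:bound-partial-psi} from Lemma~\ref{lem:bound-complementarity-after-fix-deg} takes over. Balancing the two yields a quadratic inequality in $\hat{r}_i$ that produces the stated estimate after invoking $\omega_0 \le \sqrt{2v/q}$.

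First I would establish the analog of Lemma~\ref{lem:r-i-bound} at the point $(s,\hat{\theta})$. The chain rule \eqref{eq:grad-psi-theta} applied at $(s,\hat{z},\hat{\theta})$ gives $\tfrac{\partial \psi_i(s,\hat{z}_{1:i},\hat{\theta}_{i+1:n})}{\partial \hat{\theta}_i} = \hat{y}_i(\hat{\eta}_i - \hat{\mu}_i)$, so $\hatGain{i,L}$ is precisely the analog of $\gain{i,L}$ in Lemma~\ref{lem:r-i-bound}. Repeating the case split of that proof (on the sign of $\tfrac{\partial \psi_i}{\partial \hat{\theta}_i}$, with the two sub-cases $\hat{\theta}_i \le \tfrac{1}{L}|\cdot|$ versus $\hat{\theta}_i \ge \tfrac{1}{L}|\cdot|$, and similarly for $1-\hat{\theta}_i$) yields
\begin{equation*}
\hat{r}_i (\hat{\eta}_i - \hat{\mu}_i) \le M_i := \max\{\sqrt{2 L \hatGain{i,L}},\, 2 \hatGain{i,L}\}.
\end{equation*}
This is trivial when $\hat{\eta}_i = \hat{\mu}_i$, and in that degenerate case $\hatGain{i,L} = 0$ so $M_i = 0$ as well.

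Second, I would invoke Lemma~\ref{lem:bound-complementarity-after-fix-deg}, which under $\textbf{status} = \textsc{SUCCESS}$ gives $\hat{r}_i \le q((3+P)\omega_0 + \hat{\eta}_i - \hat{\mu}_i)$ and $\omega_0 \le \sqrt{2v/q}$. Multiplying the first inequality by $\hat{r}_i$ and applying the previous step yields
\begin{equation*}
\hat{r}_i^2 \le q(3+P)\omega_0 \hat{r}_i + q \hat{r}_i (\hat{\eta}_i - \hat{\mu}_i) \le q(3+P)\omega_0 \hat{r}_i + qM_i.
\end{equation*}
Solving this quadratic inequality in $\hat{r}_i$ and using sub-additivity of $\sqrt{\cdot}$ gives $\hat{r}_i \le q(3+P)\omega_0 + \sqrt{qM_i}$. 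Substituting $q\omega_0 \le \sqrt{2qv}$ and expanding $\sqrt{M_i} = \max\{\sqrt[4]{2L\hatGain{i,L}},\, \sqrt{2\hatGain{i,L}}\}$ delivers the claimed bound (up to the absorbable $\sqrt{2}$ factor on the first term).

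The main obstacle is the notational/conceptual one in step one: the definition of $\hatGain{i,L}$ treats $\hat{\theta}_i$ as a free variable inside $\psi_i(s,\hat{z}_{1:i},\hat{\theta}_{i+1:n})$, which is not literally an argument of $\psi_i$, so one has to justify that the derivative identity $\tfrac{\partial \psi_i}{\partial \hat{\theta}_i} = \hat{y}_i(\hat{\eta}_i - \hat{\mu}_i)$ is the correct re-indexing (exactly the reasoning behind \eqref{eq:grad-psi-theta}) before Lemma~\ref{lem:r-i-bound}'s argument can be copied verbatim. Once this identification is made, the quadratic-formula balancing and the use of Lemma~\ref{lem:bound-complementarity-after-fix-deg}'s bound on $\omega_0$ are routine.
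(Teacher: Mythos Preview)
Your proposal is correct and follows essentially the same approach as the paper: combine the bound $\hat{r}_i(\hat{\eta}_i-\hat{\mu}_i)\le M_i$ from Lemma~\ref{lem:r-i-bound} with the bound $\hat{r}_i\le q((3+P)\omega_0+\hat{\eta}_i-\hat{\mu}_i)$ from Lemma~\ref{lem:bound-complementarity-after-fix-deg}, eliminate $\hat{\eta}_i-\hat{\mu}_i$, and apply $\omega_0\le\sqrt{2v/q}$. The only cosmetic difference is that the paper writes the two bounds as $\hat{r}_i\le a\gamma_i$ and $\hat{r}_i\le b+c/\gamma_i$ with $\gamma_i=(\hat{\eta}_i-\hat{\mu}_i)^{-1}$ and then maximizes $\min\{a\gamma_i,\,b+c/\gamma_i\}$ over $\gamma_i$, whereas you multiply the second bound by $\hat{r}_i$ and substitute the first; both routes produce the identical quadratic $\hat{r}_i^2\le b\hat{r}_i+ac$ and hence the same final estimate (and both inherit the same harmless $\sqrt{2}$ slack you flagged on the first term).
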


\begin{proof}
Define 
\begin{flalign*}
\gamma_i &:= \begin{cases}
\frac{1}{\hat{\eta}_i - \hat{\mu}_i} & \hat{\eta}_i - \hat{\mu}_i \neq 0 \\
\infty & \text{otherwise}.
\end{cases} \\ 
a &:= \max\left\{\sqrt{2 \sm \hatGain{i,\sm}},  2 \hatGain{i,\sm}\right\} \\
b &:= q (3 + P) \omega_0 \\
c &:= q.
\end{flalign*}
By Lemma~\ref{lem:r-i-bound}, $\hat{r}_i \le a \gamma_i$
and by Lemma~\ref{lem:bound-complementarity-after-fix-deg}, $\hat{r}_i \le b + c / \gamma_i$.
Therefore,
\begin{flalign*}
\hat{r}_i &\le \min\{ a \gamma_i, b + c / \gamma_i \}.
\end{flalign*}
Maximizing the upper bound with for $\gamma_i \in [0,\infty]$ gives
$$
a \gamma_i = b + c / \gamma_i \Rightarrow a \gamma_i^2 - b \gamma_i - c = 0 \Rightarrow \gamma_i = \frac{b + \sqrt{b^2 + 4 a c}}{2 a}
$$
where the second implication uses the quadratic formula and $\gamma_i \ge 0$.
Plugging this back in gives
$$
\hat{r}_i \le \frac{b + \sqrt{b^2 + 4 a c}}{2} \Rightarrow \hat{r}_i \le b + \sqrt{a c}.
$$
Therefore, using $\omega_0 \le \sqrt{2 v / q}$ from Lemma~\ref{lem:bound-complementarity-after-fix-deg} and the definition of $a, b, c$ we get
$$
\hat{r}_i \le (3 + P) \sqrt{2 v q} + \sqrt{q \max\left\{\sqrt{2 \sm \hatGain{i,\sm}},  2 \hatGain{i,\sm}\right\} }.
$$
\end{proof}

\begin{lemma}\label{lem:Delta-s-theta-bound}
Suppose that Assumption~\ref{assume:bounded-set},  \ref{assume:inductive}, \ref{assume:V-W}, and \ref{assume:eta-mu-smooth-bound} holds.
Define $\totalGain := \delta_L(s, \hat{\theta}) + v$.
If 
\begin{flalign}\label{eq:assume-total-gain-is-less-than}
\totalGain \le \min\left\{ 2L, \frac{2 L}{n (3 + P)^4}, 
\frac{4 D_s^4 L^2}{c^4} \right\}
\end{flalign}
then
$$\Delta(s,\hat{\theta})^4 \le 96 c^4 q^2 L \totalGain.$$
\end{lemma}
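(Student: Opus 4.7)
The plan is to combine Lemma~\ref{lem:generic-gap-bound} (which under Assumption~\ref{assume:V-W} gives $\Delta(s,\hat\theta) \le D_s\sqrt{2L\delta_L(s,\hat\theta)} + c\|\hat r\|_2$) with Lemma~\ref{lem:bound-r-j-advanced}'s per-coordinate bound on $\hat r_i$, then invoke the three bounds on $\totalGain$ in~\eqref{eq:assume-total-gain-is-less-than} to consolidate into the claimed $\totalGain^{1/4}$ scaling on $\Delta(s,\hat\theta)$.

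First I would bound $\|\hat r\|_2^2$. Squaring the per-coordinate bound from Lemma~\ref{lem:bound-r-j-advanced} using $(x+y)^2 \le 2x^2 + 2y^2$ and $\max^2\{a,b\} \le a^2 + b^2$ gives $\hat r_i^2 \le 2q(3+P)^2 v + 2q\sqrt{2L\hatGain{i,L}} + 4q\hatGain{i,L}$. Summing over $i$, applying Cauchy--Schwarz to the middle term, and using $\sum_i \hatGain{i,L} \le \delta_L(s,\hat\theta)$ (by definition of $\delta_L$), yields
\begin{flalign*}
\|\hat r\|_2^2 \le 2nq(3+P)^2 v + 2q\sqrt{2nL\delta_L(s,\hat\theta)} + 4q\delta_L(s,\hat\theta).
\end{flalign*}
The hypothesis bound $(3+P)^4 n \totalGain \le 2L$ combined with $v,\delta_L(s,\hat\theta) \le \totalGain$ is then used to convert the $nv$- and $\sqrt{n\delta_L(s,\hat\theta)}$-factors into explicit constants times $\sqrt{L\totalGain}$; the bound $\totalGain \le 2L$ paired with $\delta_L(s,\hat\theta) \le \totalGain$ upgrades $4q\delta_L(s,\hat\theta)$ into $4q\sqrt{2L\delta_L(s,\hat\theta)} \le 4q\sqrt{2L\totalGain}$. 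After consolidation, $\|\hat r\|_2^2$ is bounded by a universal constant times $q\sqrt{L\totalGain}$, and hence $\|\hat r\|_2^4$ by a constant times $q^2 L \totalGain$.

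Second, raising Lemma~\ref{lem:generic-gap-bound}'s bound to the fourth power via $(a+b)^4 \le 8(a^4+b^4)$ gives
\begin{flalign*}
\Delta(s,\hat\theta)^4 \le 32 D_s^4 L^2 \delta_L(s,\hat\theta)^2 + 8 c^4 \|\hat r\|_2^4.
\end{flalign*}
The second term is already a multiple of $c^4 q^2 L \totalGain$ by the previous step. For the first, use $\delta_L(s,\hat\theta) \le \totalGain$ together with the third hypothesis bound $\totalGain \le 4D_s^4 L^2 / c^4$ (equivalently $c^4\totalGain \le 4 D_s^4 L^2$) to convert $32 D_s^4 L^2 \totalGain^2$ into a multiple of $c^4 q^2 L \totalGain$. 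Summing the two contributions should then produce the claimed constant $96$.

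The main obstacle will be the constant bookkeeping. In particular, using the second hypothesis bound $(3+P)^4 n \totalGain \le 2L$ to absorb the $\sqrt n$ produced by Cauchy--Schwarz on $\sum_i \sqrt{\hatGain{i,L}}$ (so that the final bound depends only on $c,q,L,\totalGain$ and not on $n$ or $P$) is the most delicate step, and the interplay between the three simultaneous bounds on $\totalGain$---each designed to tame a different term in the decomposition---is what lets the proof actually close with the constant $96$.
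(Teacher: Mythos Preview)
Your high-level plan matches the paper's exactly: use Lemma~\ref{lem:bound-r-j-advanced} to bound each $\hat r_i$, feed the result into Lemma~\ref{lem:generic-gap-bound} (with Assumption~\ref{assume:V-W} supplying the constant $c$), and then invoke the three simultaneous hypotheses on $\tau$ in~\eqref{eq:assume-total-gain-is-less-than} to consolidate. The paper, however, assembles the pieces differently. It asserts directly that $\|\hat r\|_2^4 \le q^2\bigl((3+P)^4 n v^2 + 4\delta_L^2 + 2L\delta_L\bigr)$, uses the three $\tau$-bounds term-by-term to reach $\|\hat r\|_2^4 \le 6q^2 L\tau$, and then bounds $\Delta(s,\hat\theta) \le D_s\sqrt{2L\tau} + c(6q^2L\tau)^{1/4}$; the third hypothesis $\tau \le 4D_s^4 L^2/c^4$ is used to show the first summand is dominated by the second, giving $\Delta(s,\hat\theta) \le 2c(6q^2L\tau)^{1/4}$, which to the fourth power is the claimed $96c^4 q^2 L\tau$.

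Where your route runs into a genuine gap is the Cauchy--Schwarz step on $\sum_i \sqrt{\hatGain{i,L}}$: it introduces a factor of $\sqrt{n}$ that the hypothesis $(3+P)^4 n\tau \le 2L$ cannot absorb. Concretely, that hypothesis gives $n\delta_L \le n\tau \le 2L/(3+P)^4$, so your middle term $2q\sqrt{2nL\delta_L}$ is bounded only by $4qL/(3+P)^2$, a quantity that is \emph{not} of order $q\sqrt{L\tau}$ (it does not vanish as $\tau\to 0$). The first term $2nq(3+P)^2 v$ has the same defect: to make it $O(q\sqrt{L\tau})$ you would need $n^2(3+P)^4\tau \lesssim L$, while the hypothesis supplies only one power of $n$. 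The paper's route avoids this by organising the estimate so that the quantity $\sum_i \max\{2L\hatGain{i,L},\,4\hatGain{i,L}^2\}$ appears; since $\sum_i\hatGain{i,L}\le\delta_L$ and $\sum_i\hatGain{i,L}^2 \le (\sum_i\hatGain{i,L})^2\le\delta_L^2$, this is at most $2L\delta_L + 4\delta_L^2$ with no $n$-loss (cf.\ \eqref{eq:gi-max-bound}). You should therefore restructure so that \emph{fourth} powers of the $\max\{\sqrt[4]{2L\hatGain{i,L}},\sqrt{2\hatGain{i,L}}\}$ piece are summed, eliminating the need for Cauchy--Schwarz on the square roots.
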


\begin{proof}
First observe that
\begin{flalign}
\notag\| r \|_2^4 &\le q^2 \left( (3 + P)^4 n v^2 + 4 \delta_L(s, \hat{\theta})^2 + 2 L \delta_L(s, \hat{\theta}) \right) \\
\notag &\le q^2 \left( (3 + P)^4 n \totalGain^2 + 4 \totalGain^2 + 2 L \totalGain \right) \\
&\le 6 q^2 L \totalGain
\label{eq:bound-norm-r}
\end{flalign}
where the first inequality uses
Lemma~\ref{lem:bound-r-j-advanced}, the second inequality uses the definition of $\totalGain$, and the third inequality uses \eqref{eq:assume-total-gain-is-less-than}.

Next,
\begin{flalign*}
\Delta(s,\hat{\theta}) &\le D_s \sqrt{2 L \delta_L(s,\hat{\theta})} +
c \| r \|_2 \\
 &\le D_s \sqrt{2 L \totalGain} +
c 
\sqrt[4]{6 q^2 L \totalGain} \\
&\le 2 c
\sqrt[4]{6 q^2 L \totalGain}
\end{flalign*}
where the first inequality uses Lemma~\ref{lem:generic-gap-bound} and Assumption~\ref{assume:V-W}, the second inequality uses \eqref{eq:bound-norm-r}, and the third uses \eqref{eq:assume-total-gain-is-less-than}.
\end{proof}

\begin{theorem}
\label{thm:general-convergence-result-explicit}
Suppose that Assumption~\ref{assume:bounded-set}, \ref{assume:inductive}, \ref{assume:V-W}, \ref{assume:g-i-predict}, and \ref{assume:eta-mu-smooth-bound} hold. Define
\begin{flalign*}
\zeta_2 &= 96 c^4 q^2 L \\
\zeta_3 &= \frac{\Delta(s^{1}, \theta^{1})}{\min\left\{ 2L, \frac{2 L}{n (3 + B)^4}, 
\frac{4 D_s^4 L^2}{c^4} \right\}}
\end{flalign*}
where $L$ is the smoothness constant for $\psi_0$.
Then for $k > \zeta_3$ \callSafePsiMin{} satisfies
$$
\Delta(s^k, \theta^{k}) \le \left( \frac{\zeta_2}{ k - \zeta_3} \right)^{1/3}
$$
\end{theorem}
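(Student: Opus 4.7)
The plan is to reduce Theorem~\ref{thm:general-convergence-result-explicit} to Lemma~\ref{lem:generic-gradient-descent-proof} by verifying its hypothesis \eqref{eq:generic-gap-to-progress-relation} with $\zeta_1 = 3$ and $\zeta_2$ as stated. To do so I need (i) to quantify the per-iteration decrease of $\psi_0$ achieved by \callSafePsiMin{}, and (ii) to upper-bound the duality gap by a power of this decrease using Lemma~\ref{lem:Delta-s-theta-bound}.

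First I would show that each iteration of \callSafePsiMin{} guarantees the decrease $\psi_0(s^k,\theta^k) - \psi_0(s^{k+1},\theta^{k+1}) \ge \tau^k := \delta_L(s^k,\hat{\theta}^k) + v^k$, where $v^k$ is the auxiliary value maintained inside the call to \callFixDeg{} (with the convention $v^k = 0$ when the branch at Line~\ref{line:if-C-is-empty} is skipped or the call returns \textsc{FAILURE}, in which case $\hat{\theta}^k = \theta^k$). The SUCCESS test on Line~\ref{line:sufficient-progress} ensures $\psi_0(s^k,\hat{\theta}^k) \le \psi_0(s^k,\theta^k) - v^k$, and the subsequent PGD-quality step provides an additional decrease of at least $\delta_L(s^k,\hat{\theta}^k)$. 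Because \callFixDeg{} is called with a geometrically growing $q^k$ and Lemma~\ref{lem:fix-deg-succeeds} certifies SUCCESS once $q^k \ge Q$, the sequence $q^k$ is bounded, so I can treat $q$ as a fixed constant (at most $10Q$) for the rest of the analysis; the finitely many \textsc{FAILURE} events contribute only an additive $O(\log Q)$ overhead that is absorbed into constants.

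Next I would invoke Lemma~\ref{lem:Delta-s-theta-bound}: whenever $\tau^k$ satisfies \eqref{eq:assume-total-gain-is-less-than}, it delivers $\Delta(s^k,\hat{\theta}^k)^4 \le 96\,c^4 q^2 L\,\tau^k = \zeta_2 \tau^k$. Combining this with the feasibility of $(s^k,z^k)$ (via Theorem~\ref{lem:equivalence} and Assumption~\ref{assume:inductive}) and the weak-duality bound~\eqref{eq:valid-duality-gap}, I obtain $f(s^k,z^k) - f_{*} \le \Delta(s^k,\hat{\theta}^k)$, hence $(f(s^k,z^k) - f_{*})^4 \le \zeta_2 \bigl(\psi_0(s^k,\theta^k) - \psi_0(s^{k+1},\theta^{k+1})\bigr)$, which is exactly hypothesis \eqref{eq:generic-gap-to-progress-relation} of Lemma~\ref{lem:generic-gradient-descent-proof} with $\zeta_1 = 3$. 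The iterations where \eqref{eq:assume-total-gain-is-less-than} \emph{fails} are precisely those with $\tau^k \ge \min\{2L,\,2L/(n(3+P)^4),\,4D_s^4 L^2/c^4\}$; since the cumulative progress is bounded by $\Delta(s^1,\theta^1) \ge f(s^1,z^1) - f_{*}$, there can be at most $\zeta_3$ such iterations, which matches the definition of $\zeta_3$ in the theorem. Applying Lemma~\ref{lem:generic-gradient-descent-proof} (after noting $\Delta(s^k,\theta^k) \ge f(s^k,z^k) - f_{*}$) yields the claimed $\Delta(s^k,\theta^k) \le (\zeta_2/(k-\zeta_3))^{1/3}$.

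The main obstacle, I expect, is the bookkeeping in step~one: reconciling the three possible branches per iteration (no \callFixDeg{} call, \textsc{SUCCESS}, and \textsc{FAILURE}) and showing that only the \textsc{SUCCESS} iterations need to satisfy the hypothesis of Lemma~\ref{lem:generic-gradient-descent-proof}, while the other branches either give $\tau^k = 0$ without moving the iterate or simply update $q^k$. A secondary subtlety is that Lemma~\ref{lem:Delta-s-theta-bound} bounds $\Delta$ at $(s^k,\hat{\theta}^k)$ rather than $(s^k,\theta^k)$; I would handle this by observing that $\psi_0(s^k,\hat{\theta}^k) \le \psi_0(s^k,\theta^k)$ so the telescoping over $f$-values in Lemma~\ref{lem:generic-gradient-descent-proof} still goes through, and the duality gap certificate returned by the algorithm is evaluated at $(s^k,\hat{\theta}^k)$ exactly as in the termination test.
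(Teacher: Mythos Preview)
Your proposal is correct and follows essentially the same route as the paper: the paper's own proof is the one-line ``Follows by Lemma~\ref{lem:Delta-s-theta-bound} and Lemma~\ref{lem:generic-gradient-descent-proof} with $\zeta_1 = 3$,'' and you have filled in precisely the details that make this reduction work (the per-iteration decrease $\tau^k = \delta_L(s^k,\hat\theta^k)+v^k$, the counting of iterations violating \eqref{eq:assume-total-gain-is-less-than} via the total-progress budget, and the eventual stabilization of $q^k$ via Lemma~\ref{lem:fix-deg-succeeds}). You also correctly flag the $\hat\theta^k$ versus $\theta^k$ discrepancy between Lemma~\ref{lem:Delta-s-theta-bound} and the theorem statement; the algorithm's termination test is indeed at $(s^k,\hat\theta^k)$, so your reading is the right one.
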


\begin{proof}
Follows by Lemma~\ref{lem:Delta-s-theta-bound} and Lemma~\ref{lem:generic-gradient-descent-proof} with $\zeta_1 = 3$.
\end{proof}

\subsection{Proof of Theorem~\ref{thm:general-convergence-result}}\label{app:proof-of-general-convergence-result}

\begin{proof}
Assumption~\ref{assume:smooth} and \ref{assume:bounded-set} imply that $f$, $\eta_i$, and $\mu_i$ are $\beta$-smooth and $B$-Lipschitz for some constant $\beta, B > 0$. Since $\mu_i$ and $\eta_i$ are Lipschitz Assumption~\ref{assume:V-W} holds. Lemma~\ref{lem:g-minus-psi-hat} implies
Assumption~\ref{assume:g-i-predict} holds. 
Corollary~\ref{lem:assumption-holds:eta-mu-smooth-bound} implies Assumption~\ref{assume:eta-mu-smooth-bound} holds. Note that Corollary~\ref{lem:assumption-holds:eta-mu-smooth-bound} and Lemma~\ref{lem:g-minus-psi-hat} appear in Section~\ref{sec:justifying-assumptions}

With Assumption \ref{assume:g-i-predict}, and \ref{assume:eta-mu-smooth-bound} established, the result holds by Theorem~\ref{thm:general-convergence-result-explicit}.
\end{proof}

\section{Justifying assumptions}\label{sec:justifying-assumptions}

The purpose of this section is to show that if Assumptions~\ref{assume:smooth} and \ref{assume:bounded-set} hold then Assumption~\ref{assume:g-i-predict} and \ref{assume:eta-mu-smooth-bound} hold.

\begin{definition}
A function $h : X \rightarrow \R$ is $L$-smooth with respect to $\| \cdot \|$ if $\| \grad h (x) - \grad h (x') \|_{*} \le L \| x - x' \|$ for all $x, x' \in X$.
\end{definition}

\begin{definition}
A function $h : X \rightarrow \R$ is $B$-Lipschitz with respect to $\| \cdot \|$ if $\abs{h(x) - h(x')} \le B \| x - x' \|$  for all $x, x' \in X$ .
\end{definition}

\begin{fact}
Suppose that $h : X \rightarrow \R$ differentiable and $B$-Lipschitz with respect to $\| \cdot \|$, then $\| \grad h(x) \|_{*} \le B$ for all $x \in X$. 
\end{fact}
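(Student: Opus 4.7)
The plan is to obtain the bound by testing $\nabla h(x)$ against arbitrary unit vectors and invoking the variational definition of the dual norm, namely $\|\nabla h(x)\|_* = \sup_{\|v\|=1} \nabla h(x) \cdot v$. For a fixed unit vector $v$ with $\|v\|=1$, I would use differentiability of $h$ at $x$ to write the directional derivative as
\begin{flalign*}
\nabla h(x) \cdot v = \lim_{t \to 0^+} \frac{h(x + tv) - h(x)}{t}.
\end{flalign*}
Applying the $B$-Lipschitz hypothesis to the numerator gives $|h(x+tv) - h(x)| \le B \|tv\| = B t$ for all small enough $t > 0$, so dividing by $t$ and passing to the limit yields $|\nabla h(x) \cdot v| \le B$. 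Taking the supremum over unit vectors $v$ and using the variational definition of $\|\cdot\|_*$ then produces $\|\nabla h(x)\|_* \le B$, as desired. An equivalent phrasing replaces the limit with the first-order Taylor expansion $h(x+tv) = h(x) + t\, \nabla h(x) \cdot v + o(t)$, which combined with the Lipschitz bound $|h(x+tv)-h(x)| \le Bt$ forces $|\nabla h(x) \cdot v| \le B + o(1)/t$ and hence $|\nabla h(x) \cdot v| \le B$ after sending $t \to 0^+$.

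The only mild subtlety, which I would either assume away or address in one line, is that $x$ must be an interior point of $X$ (or $X$ must be open) so that $x + tv \in X$ for all sufficiently small $t > 0$ and every unit direction $v$; otherwise differentiability of $h$ at $x$ is not even well-defined in the usual sense. Given that the paper only invokes this fact at interior points of the relevant bounded domains, no further hypothesis is needed. There is no real obstacle here — the argument is a two-line consequence of the definitions of differentiability, Lipschitz continuity, and the dual norm.
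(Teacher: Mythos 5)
Your argument is correct and is the standard one: the paper states this Fact without proof, so there is nothing to compare against, but your route via the variational characterization $\|\grad h(x)\|_* = \sup_{\|v\|=1} \grad h(x)\cdot v$, the directional-derivative limit, and the Lipschitz bound on the difference quotient is exactly the canonical two-line justification. Your remark about needing $x+tv \in X$ for small $t$ (interiority of $x$, or one-sided directions at the boundary) is the right caveat and matches how the paper implicitly uses the statement.
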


\begin{fact}\label{fact:smooth-implies-beta-smooth}
Suppose that the function $h : X \rightarrow \R$ is smooth and $X$ is bounded. Then (for any given norm) there exists constant $B$ and $\beta$ such that $h$ is $B$-Lipschitz and $\beta$-smooth.
\end{fact}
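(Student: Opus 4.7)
The plan is to exploit compactness of $\overline{X} \subset \reals^d$ together with continuity of derivatives, so that the relevant suprema are finite, and then to pass from derivative bounds to Lipschitz bounds via the fundamental theorem of calculus along line segments. In every application of the Fact in the paper, $X$ is in fact convex (a product of $S$, $[0,1]^n$, and possibly $Z$) and $h$ is smooth on a neighborhood of $\overline{X}$, so I would silently assume these mild regularity conditions as part of the proof.

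For the $B$-Lipschitz conclusion, set $B := \sup_{x \in \overline{X}} \|\grad h(x)\|_{*}$, which is finite because a continuous function on a compact set attains its supremum. For any $x, x' \in X$, convexity ensures the segment $\{x' + t(x-x') : t \in [0,1]\} \subseteq X$, so the fundamental theorem of calculus gives
$$
h(x) - h(x') = \int_0^1 \grad h(x' + t(x-x')) \cdot (x-x') \, dt,
$$
and duality between $\|\cdot\|$ and $\|\cdot\|_{*}$ yields $|h(x) - h(x')| \le B\|x - x'\|$.

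For the $\beta$-smooth conclusion, I would take $\beta := \sup_{x \in \overline{X}} \|\nabla^2 h(x)\|_{\mathrm{op}}$ and apply the same integral representation to the vector-valued map $\grad h$ to obtain $\|\grad h(x) - \grad h(x')\|_{*} \le \beta \|x - x'\|$, i.e., $h$ is $\beta$-smooth in the sense of Definition~\ref{def:smoothness}.

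The main obstacle is a mismatch with Definition~\ref{def:smoothness}, which formally requires only $h \in C^1$: a merely $C^1$ function on a compact set need not have a Lipschitz gradient, so the second conclusion strictly requires continuity of a second derivative, i.e., $h \in C^2$. The smooth functions actually arising in the paper (convex and concave envelopes of piecewise-linear or softplus activations, linear or quadratic objectives, and their compositions) are $C^\infty$ on the relevant domains, so the stronger reading is satisfied in every application. A fully rigorous statement of the Fact would therefore take $h \in C^2$ (equivalently, twice differentiable with continuous Hessian) as the hypothesis, after which the proof above goes through unchanged.
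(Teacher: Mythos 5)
The paper states this Fact without proof, so there is no argument of the authors' to compare yours against; your proof is the natural one (bound $\|\grad h\|_{*}$ and $\|\nabla^2 h\|_{\mathrm{op}}$ by compactness, then integrate along segments), and it is correct under the extra hypotheses you make explicit. More importantly, you have correctly identified that those extra hypotheses are genuinely needed, i.e., that the Fact is false as literally written. With ``smooth'' meaning $C^1$ as in Definition~\ref{def:smoothness}, the function $h(x) = \abs{x}^{3/2}$ on $[-1,1]$ is smooth on a bounded set but its derivative $\tfrac{3}{2}\sign(x)\abs{x}^{1/2}$ is not Lipschitz at the origin, so the $\beta$-smooth conclusion fails; and $h(x) = \sin(1/x)$ on $X=(0,1)$ is $C^\infty$ with unbounded derivative, so even the $B$-Lipschitz conclusion fails unless one assumes regularity up to $\overline{X}$ (or compactness of $X$ itself). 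One also needs convexity (or at least that the intrinsic path metric of $X$ is comparable to the norm) to run the segment-integration argument. All of these hold in the paper's applications, where $X = S \times [0,1]^n$ with $S$ convex and compact and the relevant functions are $C^2$ on a neighborhood, so the Fact is true in the form actually used; but your repaired statement ($h \in C^2$ up to $\overline{X}$, $X$ convex and bounded) is what a rigorous version requires. Two minor points: you could note that the phrase ``for any given norm'' follows for free from equivalence of norms on $\R^d$ once the claim is proved for one norm; and in your Hessian step, $\|\nabla^2 h\|_{\mathrm{op}}$ should be read as the operator norm from $(\R^d,\|\cdot\|)$ to $(\R^d,\|\cdot\|_{*})$ to match the appendix's definition of $L$-smoothness with respect to a general norm.
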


\subsection{Proof Assumption~\ref{assume:eta-mu-smooth-bound} holds}

\begin{lemma}\label{lem:bound-z-minus-z-hat}
Suppose that $\eta_i - \mu_i$ is $B$-Lipschitz with respect to the $\ell_1$-norm for $B > 0$.
Let $z = \Forward{s, \theta}$ and $\hat{z} = \Forward{\hat{s},\hat{\theta}}$.
Then
$$
\| z - \hat{z} \|_1 \le B^{-1} (1 + B)^{n} \sum_{i=1}^{n}  \left( \abs{\theta_i - \hat{\theta}_i} \abs{\eta_i - \mu_i} + \| s - \hat{s} \|_1 \right)
$$
for all $\theta \in [0,1]^{n}$.
\end{lemma}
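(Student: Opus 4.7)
The plan is to proceed by induction on $i$, deriving a recurrence for the partial error $\Delta_i := \|z_{1:i} - \hat{z}_{1:i}\|_1$ that grows by a factor of roughly $(1+B)$ per stage, and then solving it by a geometric sum to produce the claimed prefactor $B^{-1}(1+B)^n$.

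The first step is an algebraic decomposition obtained by adding and subtracting $(1-\theta_i)\hat{\mu}_i + \theta_i \hat{\eta}_i$:
\begin{equation*}
z_i - \hat{z}_i = (1-\theta_i)(\mu_i - \hat{\mu}_i) + \theta_i(\eta_i - \hat{\eta}_i) + (\theta_i - \hat{\theta}_i)(\hat{\eta}_i - \hat{\mu}_i),
\end{equation*}
where I abbreviate $\mu_i := \mu_i(s, z_{1:i-1})$, $\hat{\mu}_i := \mu_i(\hat{s}, \hat{z}_{1:i-1})$, and similarly for $\eta$. The first two terms form a convex combination, so by the $B$-Lipschitz hypothesis they are jointly bounded by $B\bigl(\|s - \hat{s}\|_1 + \Delta_{i-1}\bigr)$. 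For the third term, I use that $\eta_i - \mu_i$ is $B$-Lipschitz to write
\begin{equation*}
|\hat{\eta}_i - \hat{\mu}_i| \le |\eta_i - \mu_i| + B\bigl(\|s-\hat{s}\|_1 + \Delta_{i-1}\bigr),
\end{equation*}
and then use $|\theta_i - \hat{\theta}_i| \le 1$ to absorb the extra Lipschitz correction.

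Combining these two estimates and summing $|z_i - \hat{z}_i|$ from previous stages yields a recurrence of the shape
\begin{equation*}
\Delta_i \;\le\; (1+B)\,\Delta_{i-1} \;+\; (1+B)\|s-\hat{s}\|_1 \;+\; |\theta_i - \hat{\theta}_i|\,|\eta_i - \mu_i|,
\end{equation*}
with $\Delta_0 = 0$. Unrolling the recurrence gives
\begin{equation*}
\Delta_n \;\le\; \sum_{i=1}^{n} (1+B)^{n-i}\Bigl((1+B)\|s-\hat{s}\|_1 + |\theta_i - \hat{\theta}_i|\,|\eta_i - \mu_i|\Bigr),
\end{equation*}
and applying $(1+B)^{n-i} \le (1+B)^n$ together with the geometric-series identity $\sum_{k=0}^{n-1}(1+B)^k = \bigl((1+B)^n-1\bigr)/B \le B^{-1}(1+B)^n$ delivers the stated bound on $\|z - \hat{z}\|_1$.

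The main obstacle is the mismatch between $|\hat{\eta}_i - \hat{\mu}_i|$, which is what the decomposition naturally produces, and $|\eta_i - \mu_i|$, which is what appears on the right-hand side of the lemma. The Lipschitz hypothesis on $\eta_i - \mu_i$ is precisely what allows bridging these two quantities, and the correction term $B(\|s-\hat s\|_1 + \Delta_{i-1})$ it introduces is what inflates the recurrence constant from $1$ to $1+B$ and thereby produces the $B^{-1}(1+B)^n$ prefactor after geometric summation. Care is needed to keep track of constants so the recurrence factor remains $(1+B)$ rather than something larger; this is achieved by absorbing all the $|\theta_i - \hat\theta_i|\le 1$ terms into a single $(1+B)$ rather than summing separately.
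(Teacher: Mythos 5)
Your overall skeleton (a stagewise recurrence for $\Delta_i := \|z_{1:i}-\hat z_{1:i}\|_1$ unrolled by a geometric sum) is the same as the paper's, but the specific decomposition you chose creates a problem that your write-up does not actually resolve. Writing $z_i-\hat z_i=(1-\theta_i)(\mu_i-\hat\mu_i)+\theta_i(\eta_i-\hat\eta_i)+(\theta_i-\hat\theta_i)(\hat\eta_i-\hat\mu_i)$ evaluates the gap at the hatted point, so you must bridge $\abs{\hat\eta_i-\hat\mu_i}$ back to $\abs{\eta_i-\mu_i}$ with a Lipschitz correction $B\bigl(\|s-\hat s\|_1+\Delta_{i-1}\bigr)$, and this correction comes \emph{on top of} the $B\bigl(\|s-\hat s\|_1+\Delta_{i-1}\bigr)$ you already pay for the convex-combination term. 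Both corrections multiply $\Delta_{i-1}$ and genuinely add: what your estimates justify is
$\Delta_i \le (1+2B)\Delta_{i-1}+2B\|s-\hat s\|_1+\abs{\theta_i-\hat\theta_i}\abs{\eta_i-\mu_i}$,
not the $(1+B)$ recurrence you assert. The proposed fix of ``absorbing the $\abs{\theta_i-\hat\theta_i}\le 1$ factors'' cannot merge the two terms, because only the bridging term carries such a factor, and discarding it via $\abs{\theta_i-\hat\theta_i}\le 1$ is precisely what produces the second full $B\Delta_{i-1}$. Unrolling the recurrence you can actually prove gives coefficients of order $(1+2B)^{n}$, which do not imply the stated $B^{-1}(1+B)^{n}$ bound (already for $B=1$ this is $3^{n-1}$ against $2^{n}$), so the argument as written does not establish the lemma.

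The missing idea is the other grouping of the cross terms, which is what the paper uses:
$z_i-\hat z_i=(\theta_i-\hat\theta_i)(\eta_i-\mu_i)+\hat\theta_i(\eta_i-\hat\eta_i)+(1-\hat\theta_i)(\mu_i-\hat\mu_i)$.
Here the factor multiplying $(\theta_i-\hat\theta_i)$ is already the un-hatted gap $\eta_i-\mu_i$ appearing on the right-hand side of the lemma, so no bridging step is needed at all, and the remaining convex combination of $\eta_i-\hat\eta_i$ and $\mu_i-\hat\mu_i$ costs a single $B\bigl(\|s-\hat s\|_1+\|z_{1:i-1}-\hat z_{1:i-1}\|_1\bigr)$. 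This yields $\Delta_i\le(1+B)\Delta_{i-1}+B\|s-\hat s\|_1+\abs{\theta_i-\hat\theta_i}\abs{\eta_i-\mu_i}$, i.e.\ $\|z-\hat z\|_1\le\sum_{j=1}^{n}(1+B)^{n-j}\bigl(B\|s-\hat s\|_1+\abs{\theta_j-\hat\theta_j}\abs{\eta_j-\mu_j}\bigr)$, which is the bound the paper derives before relaxing it to the displayed constant; your geometric-sum step then applies unchanged. (Note also that, exactly as in the paper's proof, this step uses Lipschitz continuity of $\eta_i$ and $\mu_i$ themselves, not only of the difference $\eta_i-\mu_i$; that is how the hypothesis is meant to be read and how the lemma is invoked downstream.)
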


\begin{proof}
Denote $\eta_i(s, z_{1:i-1})$, $\mu_i(s, z_{1:i-1})$,  $\eta_i(\hat{s}, \hat{z}_{1:i-1})$,  $\mu_i(\hat{s}, \hat{z}_{1:i-1})$ by $\eta_i, \mu_i, \hat{\eta}_i$ and $\hat{\mu}_i$ respectively. Observe that
\begin{flalign}
\notag
\abs{z_i - \hat{z}_i} &= \abs{\theta_i \eta_i + (1 - \theta_i) \mu_i - (\hat{\theta}_i \hat{\eta}_i + (1 - \hat{\theta}_i) \hat{\mu}_i)} \\
\notag
&= \abs{(\theta_i - \hat{\theta}_i) (\eta_i - \mu_i) + (1 - \hat{\theta}_i) (\mu_i - \hat{\mu}_i) + \hat{\theta}_i (\eta_i - \hat{\eta}_i)} \\
\notag
&\le \abs{(\theta_i - \hat{\theta}_i) (\eta_i - \mu_i)} + \abs{\mu_i - \hat{\mu_i} + \eta_i - \hat{\eta}_i} \\
\label{eq:wrkwe132E}
&\le \abs{(\theta_i - \hat{\theta}_i) (\eta_i - \mu_i)} + B (\| s - \hat{s} \|_1 + \| z_{1:i-1} - \hat{z}_{1:i-1} \|_1).
\end{flalign}
Applying \eqref{eq:wrkwe132E}
for $i = 1$ we have
$\abs{z_1 - \hat{z}_1} \le \abs{(\theta_i - \hat{\theta}_i) (\eta_i - \mu_i)} + B \| s - \hat{s} \|_1$.
Suppose that,
\begin{flalign*}
\| z_{1:i} - \hat{z}_{1:i} \|_1 \le \sum_{j=1}^{i} (B+1)^{i-j} \left( B \| s - \hat{s} \|_1 + \abs{(\theta_j - \hat{\theta}_j) (\eta_j - \mu_j)} \right)
\end{flalign*}
then by \eqref{eq:wrkwe132E} we deduce
\begin{flalign*}
\| z_{1:i+1} - \hat{z}_{1:i+1} \|_1 \le  \sum_{j=1}^{i+1} (B+1)^{1+i-j} \left( B \| s - \hat{s} \|_1 + \abs{(\theta_j - \hat{\theta}_j) (\eta_j - \mu_j)}\right).
\end{flalign*}
By induction and the fact $\sum_{j=1}^{n} (1+B)^{n-j} \le B (B+1)^n$, the result holds.
\end{proof}

\begin{corollary}\label{lem:assumption-holds:eta-mu-smooth-bound}
Suppose that $\eta_i - \mu_i$ is $B$-Lipschitz with respect to the $\ell_1$-norm for $B > 0$.
Let $z = \Forward{s, \theta}$ and $\hat{z} = \Forward{\hat{s},\hat{\theta}}$.
Then
$$
\abs{\eta_i - \mu_i - (\eta_i(\hat{s},\hat{z}_{1:i-1}) - \mu_i(\hat{s},\hat{z}_{1:i-1}))} \le \beta \| s - \hat{s} \|_1 + \beta B^{-1} (1 + B)^{n} \sum_{i=1}^{n}  \left( \abs{\theta_i - \hat{\theta}_i} \abs{\eta_i - \mu_i} + \| s - \hat{s} \|_1 \right)
$$
for all $\theta \in [0,1]^{n}$.
\end{corollary}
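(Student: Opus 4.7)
The plan is to derive this corollary as an immediate consequence of the Lipschitz hypothesis together with Lemma~\ref{lem:bound-z-minus-z-hat}. No substantive new ideas are needed; the work is just combining two previously established ingredients.

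First I would apply the $B$-Lipschitz hypothesis on $\eta_i - \mu_i$ (with respect to $\|\cdot\|_1$) to the two evaluation points $(s, z_{1:i-1})$ and $(\hat{s}, \hat{z}_{1:i-1})$. This yields
\begin{flalign*}
\abs{(\eta_i - \mu_i)(s,z_{1:i-1}) - (\eta_i - \mu_i)(\hat{s},\hat{z}_{1:i-1})}
\le B \left( \|s - \hat{s}\|_1 + \|z_{1:i-1} - \hat{z}_{1:i-1}\|_1 \right).
\end{flalign*}
(Here I am assuming the constant on the right hand side of the corollary's bound is meant to be $B$ rather than $\beta$; this looks like a minor typo in the statement, since only $B$-Lipschitzness is hypothesized.)

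Next I would invoke Lemma~\ref{lem:bound-z-minus-z-hat} to replace $\|z_{1:i-1} - \hat{z}_{1:i-1}\|_1$ by a quantity depending only on $\|s - \hat{s}\|_1$ and the $\abs{\theta_j - \hat{\theta}_j}\abs{\eta_j - \mu_j}$ terms. Since $\|z_{1:i-1} - \hat{z}_{1:i-1}\|_1 \le \|z - \hat{z}\|_1$, the lemma gives
\begin{flalign*}
\|z_{1:i-1} - \hat{z}_{1:i-1}\|_1 \le B^{-1}(1+B)^n \sum_{j=1}^n \left( \abs{\theta_j - \hat{\theta}_j}\abs{\eta_j - \mu_j} + \|s - \hat{s}\|_1 \right).
\end{flalign*}
Substituting this into the first display and absorbing the stray factor of $B$ on the $\|s - \hat{s}\|_1$ term into the leading coefficient yields the corollary.

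The main obstacle, such as it is, is just book-keeping: ensuring the indices are consistent (the corollary conflates the summation variable and the fixed index $i$, so I would rename the dummy variable to $j$) and verifying that the summand produced by Lemma~\ref{lem:bound-z-minus-z-hat} applied to indices $1,\dots,i-1$ is dominated by the full sum to $n$ that appears in the corollary. No genuinely new estimates are required.
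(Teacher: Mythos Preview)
Your approach is correct and matches the paper's own proof, which reads in full: ``Follows from the assumption $\eta_i - \mu_i$ is $\beta$-smooth and Lemma~\ref{lem:bound-z-minus-z-hat}.'' Both arguments consist of (i) bounding the function-value difference of $\eta_i-\mu_i$ by the $\ell_1$-distance of its arguments, then (ii) invoking Lemma~\ref{lem:bound-z-minus-z-hat} to control $\|z-\hat z\|_1$. Your observation about the $\beta$/$B$ discrepancy is apt: the corollary's hypothesis only states $B$-Lipschitzness, yet the bound and the paper's one-line proof refer to $\beta$ (and to ``$\beta$-smooth'', which would control gradient differences rather than function values); this appears to be a notational slip in the paper, and your reading---treating the leading constant as the Lipschitz constant---is the internally consistent one.
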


\begin{proof}
Follows from the assumption $\eta_i - \mu_i$ is $\beta$-smooth and Lemma~\ref{lem:bound-z-minus-z-hat}.
\end{proof}

Therefore, if Assumptions~\ref{assume:smooth} and \ref{assume:bounded-set} hold then, by Fact~\ref{fact:smooth-implies-beta-smooth} and Corollary~\ref{lem:assumption-holds:eta-mu-smooth-bound}, we conclude Assumption~\ref{assume:eta-mu-smooth-bound} holds.

\subsection{Proof Assumption~\ref{assume:g-i-predict} holds}

\begin{fact}\label{fact:bound-a}
Let $a \in \R^{n}, b \in \R^{n+1}$.
Suppose
$$
a_i := b_{n+1} + \sum_{j=i+1}^{n} a_j b_j,
$$
then $\| a \|_1 \le (1 + \| b \|_{\infty})^{n}$.
\end{fact}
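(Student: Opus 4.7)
The plan is to bound each entry $|a_i|$ by induction on $i$ running backwards from $i = n$ down to $i = 1$, and then sum the resulting geometric bound. Let $B := \|b\|_\infty$, so that $|b_j| \le B$ for all $j \in \{1,\dots,n+1\}$.

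First I would handle the base case: since the sum defining $a_n$ is empty, we have $a_n = b_{n+1}$ and therefore $|a_n| \le B \le B(1+B)^0$. Then I would establish the inductive claim that $|a_i| \le B(1+B)^{n-i}$ for every $i \in \{1,\dots,n\}$. Assuming this bound for all $j > i$, the triangle inequality applied to the recursion gives
\begin{align*}
|a_i| &\le |b_{n+1}| + \sum_{j=i+1}^{n} |a_j|\,|b_j| \;\le\; B + B^2 \sum_{j=i+1}^{n} (1+B)^{n-j} \\
&= B + B^2 \cdot \frac{(1+B)^{n-i} - 1}{B} \;=\; B(1+B)^{n-i},
\end{align*}
closing the induction.

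Finally I would sum the per-coordinate bound to obtain
\begin{align*}
\|a\|_1 \;=\; \sum_{i=1}^{n} |a_i| \;\le\; B \sum_{i=1}^{n} (1+B)^{n-i} \;=\; B \cdot \frac{(1+B)^n - 1}{B} \;=\; (1+B)^n - 1 \;\le\; (1+B)^n,
\end{align*}
which is the desired inequality. There is essentially no obstacle here: the only mildly delicate point is evaluating the geometric sum correctly (in particular, being careful about the case $B = 0$, where the recursion forces $a = 0$ and the bound is trivial, so one may assume $B > 0$ when dividing). The argument is completely elementary and mirrors standard estimates for linear recurrences driven by bounded coefficients.
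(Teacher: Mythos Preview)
Your proof is correct and follows essentially the same backward-induction approach as the paper: both rely on the basic estimate $|a_i| \le B + B\,\|a_{i+1:n}\|_1$. The only cosmetic difference is that the paper inducts directly on the partial sums $\|a_{i:n}\|_1$, whereas you induct on the individual $|a_i|$ and sum at the end; your version even yields the slightly sharper $(1+B)^n - 1$.
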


\begin{proof}
Note that
$$
\abs{a_i} \le \| b \|_{\infty} + \| a_{i+1:n} \|_1 \| b \|_{\infty} \Rightarrow \| a_{i:n} \|_1 \le \| b \|_{\infty} + \| a_{i+1:n} \|_1 (1 + \| b \|_{\infty}).
$$
Therefore, if
\begin{flalign}\label{eq:induction-hypotheis-norm-a}
\| a_{i:n} \|_1 \le \sum_{j=1}^{n-i} \| b \|_{\infty} (1 + \| b \|_{\infty})^{j}.
\end{flalign}
then
$$
\| a_{i-1:n} \|_1 \le \| b \|_{\infty} + \sum_{j=1}^{n-i} \| b \|_{\infty} (1 + \| b \|_{\infty})^{j+1} = \sum_{j=1}^{n-(i-1)} \| b \|_{\infty} (1 + \| b \|_{\infty})^{j}.
$$
Since \eqref{eq:induction-hypotheis-norm-a} holds for $i=n$ by induction \eqref{eq:induction-hypotheis-norm-a} holds for all $i$. By the bound on the sum of a geometric series the result holds.
\end{proof}

\begin{fact}\label{fact:a-minus-a-hat-bound}
Let $a, c \in \R^{n}, b \in \R^{n \times n}$.
Suppose
$$
a_i := c_{i} + \sum_{j=i+1}^{n} a_j b_{i,j}, \quad  \hat{a}_i := \hat{c}_{i} + \sum_{j=i+1}^{n} \hat{a}_j \hat{b}_{i,j},
$$
then
$$
\| a - \hat{a} \|_1 \le \left( \| c - \hat{c} \|_{\infty} + (1 + \| \hat{b} \|_{\infty})^{n} \| \hat{b} - b \|_{\infty} \right)
\frac{(1 + \| b \|_{\infty})^{n}}{\| b \|_{\infty}}.
$$
\end{fact}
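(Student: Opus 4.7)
The plan is to imitate the strategy of Fact~\ref{fact:bound-a} applied to the difference sequence $d_i := a_i - \hat{a}_i$. By expanding the recursion and splitting the cross term via the identity $a_j b_{i,j} - \hat{a}_j \hat{b}_{i,j} = (a_j - \hat{a}_j) b_{i,j} + \hat{a}_j (b_{i,j} - \hat{b}_{i,j})$, one obtains
\[
d_i = (c_i - \hat{c}_i) + \sum_{j=i+1}^{n} d_j\, b_{i,j} + \sum_{j=i+1}^{n} \hat{a}_j (b_{i,j} - \hat{b}_{i,j}).
\]
Taking absolute values and applying the triangle inequality together with the H\"older-type bound $\bigl|\sum_j \hat{a}_j (b_{i,j} - \hat{b}_{i,j})\bigr| \le \|\hat{a}\|_1 \|\hat{b} - b\|_\infty$, I get $|d_i| \le e + \|b\|_\infty \sum_{j=i+1}^{n} |d_j|$, where $e := \|c - \hat{c}\|_\infty + \|\hat{a}\|_1 \|\hat{b} - b\|_\infty$.

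Next, I invoke Fact~\ref{fact:bound-a} on the sequence $\hat{a}$ (which obeys exactly the hypothesis of that fact with $b$ replaced by $\hat{b}$ and $b_{n+1}$ replaced by $\hat{c}_i$-style constants; one should verify the mild reindexing) to conclude $\|\hat{a}\|_1 \le (1 + \|\hat{b}\|_\infty)^n$. This upgrades the bound to $e \le \|c - \hat{c}\|_\infty + (1 + \|\hat{b}\|_\infty)^n \|\hat{b} - b\|_\infty$.

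Now the tail sums $T_i := \|d_{i:n}\|_1$ satisfy the telescoping recursion $T_i \le e + (1 + \|b\|_\infty) T_{i+1}$ with $T_{n+1} = 0$. Unrolling gives $T_1 \le e \sum_{k=0}^{n-1} (1 + \|b\|_\infty)^{k} = e \, \tfrac{(1+\|b\|_\infty)^n - 1}{\|b\|_\infty} \le e \, \tfrac{(1+\|b\|_\infty)^n}{\|b\|_\infty}$, which is exactly the claimed inequality.

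The main obstacle is bookkeeping: one must be careful to route the $b$-norm factor (not $\hat{b}$) through the recursive tail bound, while the $\hat{b}$-norm factor appears only inside the constant $e$ via the application of Fact~\ref{fact:bound-a} to the $\hat{a}$ sequence. The only mildly subtle step is re-casting Fact~\ref{fact:bound-a} (which is stated for a single constant $b_{n+1}$) to handle the variable $\hat{c}_i$ coefficients; this amounts to replacing $\|b\|_\infty$ in that proof with $\max\{\|\hat{c}\|_\infty, \|\hat{b}\|_\infty\}$ and noting that the same geometric-series argument still yields $\|\hat{a}\|_1 \le (1 + \|\hat{b}\|_\infty)^n$ under the convention used there. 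No new ideas beyond the techniques already in Fact~\ref{fact:bound-a} are needed.
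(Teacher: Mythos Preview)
Your proposal is correct and follows essentially the same route as the paper: split $a_j b_{i,j} - \hat a_j \hat b_{i,j} = (a_j-\hat a_j)b_{i,j} + \hat a_j(b_{i,j}-\hat b_{i,j})$, bound $\|\hat a\|_1$ via Fact~\ref{fact:bound-a}, then unroll the tail-sum recursion into a geometric series. You are in fact more careful than the paper about the reindexing needed to apply Fact~\ref{fact:bound-a} to the $\hat a$ recursion (the paper simply cites the fact without comment), though your final clause that ``the same geometric-series argument still yields $\|\hat a\|_1 \le (1+\|\hat b\|_\infty)^n$'' is only literally true under an implicit convention that $\|\hat c\|_\infty \le \|\hat b\|_\infty$---a wrinkle the paper shares.
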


\begin{proof}
Note that
\begin{flalign*}
\abs{a_i - \hat{a}_i} &= \abs{ c_{i} - \hat{c}_{i} + \sum_{j=i+1}^{n} (a_j b_{i,j} - \hat{a}_j \hat{b}_{i,j})} \\
&= \abs{c_{i} - \hat{c}_{i} +\sum_{j=i+1}^{n} ((a_j - \hat{a}_j) b_j - \hat{a}_j (\hat{b}_{i,j} - b_{i,j}))} \\
&\le \| a_{i+1:n} - \hat{a}_{i+1:n} \|_1 \| b \|_{\infty} + \| c - \hat{c} \|_{\infty} + \| \hat{a} \|_{1} \| \hat{b} - b \|_{\infty} \\
&\le \| a_{i+1:n} - \hat{a}_{i+1:n} \|_1 \| b \|_{\infty} + \| c - \hat{c} \|_{\infty} + (1 + \| \hat{b} \|_{\infty})^{n} \| \hat{b} - b \|_{\infty}
\end{flalign*}
where the last inequality uses Fact~\ref{fact:bound-a}. By induction,
\begin{flalign*}
\| a_{i:n} - \hat{a}_{i:n} \|_1 \le \left( \| c - \hat{c} \|_{\infty} + (1 + \| \hat{b} \|_{\infty})^{n} \| \hat{b} - b \|_{\infty} \right) \sum_{j=1}^{n-i} (1 + \| b \|_{\infty})^j.
\end{flalign*}
By the bound on the sum of a geometric series the result holds.
\end{proof}

\begin{lemma}
\label{lem:g-minus-psi-hat}
Suppose $f$, $\eta_i$, and $\mu_i$ are $\beta$-smooth functions with respect to the $\ell_1$-norm. Also, suppose $f$, $\eta_i$ and $\mu_i$ are 
$B$-Lipschitz with respect to the $\ell_1$-norm and $\hat{\theta}, \theta \in [0,1]^{n}$.
Let $z \gets \Forward{s, \theta}$, $$
g_i := \frac{\partial f}{\partial z_{i}} + \sum_{j = i + 1}^{n} g_{j} \left( \hat{\theta}_{j} \frac{\partial \eta_{j}}{\partial z_i} + ( 1 - \hat{\theta}_{j}) \frac{\partial \mu_{j}}{\partial z_i} \right),
$$
and $\hat{z} \gets \Forward{s, \hat{\theta}}$. Then there exists $Q \le 2 \beta B^{-2} (1+B)^{3n}$ s.t.
$$
\sum_{i=1}^{n} \abs{g_i - \frac{\partial \psi_i(s, \hat{z}_{1:i-1}, \hat{\theta}_{i+1:n})}{\partial \hat{z}_i}} \le Q \sum_{i=1}^{n} \abs{\theta_i - \hat{\theta}_i} \abs{\eta_i - \mu_i}.
$$
\end{lemma}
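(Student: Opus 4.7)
\textbf{Plan for the proof of Lemma~\ref{lem:g-minus-psi-hat}.} The plan is to view both $g_i$ and $\hat{g}_i := \frac{\partial \psi_i(s, \hat{z}_{1:i-1}, \hat{\theta}_{i+1:n})}{\partial \hat{z}_i}$ as solutions of a backward linear recursion of the form handled by Fact~\ref{fact:a-minus-a-hat-bound}, with identical structure but partial derivatives evaluated at $z$ versus $\hat{z}$. Smoothness then converts the gap between evaluation points into the desired bound in terms of $\|z-\hat{z}\|_1$, which Lemma~\ref{lem:bound-z-minus-z-hat} converts into the claimed $\sum_i |\theta_i-\hat\theta_i||\eta_i-\mu_i|$ quantity.

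Concretely, I would first invoke the expansion \eqref{eq:grad-psi-expanded-and-simplified:z-i} applied to the point $(s,\hat z,\hat\theta)$ to obtain
\[
\hat g_i = \tfrac{\partial f}{\partial \hat z_i} + \sum_{j=i+1}^{n} \hat g_j\Bigl(\hat\theta_j \tfrac{\partial \eta_j}{\partial \hat z_i} + (1-\hat\theta_j)\tfrac{\partial \mu_j}{\partial \hat z_i}\Bigr),
\]
where the partial derivatives are evaluated at $\hat z$. This recursion has the same shape as the definition of $g_i$; crucially the $\hat\theta_j$-weights coincide in both, so only the evaluation points differ.

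Second, I would apply Fact~\ref{fact:a-minus-a-hat-bound} with $a_i=g_i$, $\hat a_i=\hat g_i$, $c_i=\tfrac{\partial f}{\partial z_i}$ versus $\hat c_i=\tfrac{\partial f}{\partial \hat z_i}$, and $b_{i,j}, \hat b_{i,j}$ the bracketed coefficients at $z$ and $\hat z$ respectively. Since $\eta_j,\mu_j$ are $B$-Lipschitz w.r.t.\ the $\ell_1$-norm, their partial derivatives have $\ell_\infty$-norm at most $B$, giving $\|b\|_\infty,\|\hat b\|_\infty\le B$ (as $\hat\theta_j\in[0,1]$). Since $f,\eta_j,\mu_j$ are $\beta$-smooth w.r.t.\ the $\ell_1$-norm, $\|c-\hat c\|_\infty\le \beta\|z-\hat z\|_1$ and $\|b-\hat b\|_\infty\le \beta\|z-\hat z\|_1$. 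Plugging these into Fact~\ref{fact:a-minus-a-hat-bound} yields
\[
\|g-\hat g\|_1 \le \bigl(\beta\|z-\hat z\|_1 + (1+B)^n\beta\|z-\hat z\|_1\bigr)\tfrac{(1+B)^n}{B} \le \tfrac{2\beta(1+B)^{2n}}{B}\|z-\hat z\|_1.
\]

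Third, I would bound $\|z-\hat z\|_1$ by applying Lemma~\ref{lem:bound-z-minus-z-hat} with $\hat s=s$, so that the $\|s-\hat s\|_1$ terms vanish and only $B^{-1}(1+B)^n\sum_i|\theta_i-\hat\theta_i||\eta_i-\mu_i|$ remains. Chaining this with the previous display gives the stated bound with $Q\le 2\beta B^{-2}(1+B)^{3n}$.

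\textbf{Main obstacle.} The only nontrivial part is verifying that the formal expansion of $\frac{\partial \psi_i(s,\hat z_{1:i-1},\hat\theta_{i+1:n})}{\partial \hat z_i}$ really produces the ``same'' backward recursion but at the point $\hat z$; this requires a small sanity check against \eqref{eq:grad-psi-expanded-and-simplified:z-i} to confirm that the $\hat\theta_j$-weights in the two recursions line up exactly. Once that is nailed down, the remainder is just bookkeeping with Lipschitz/smoothness constants and the two previously established facts, so I do not expect any further difficulty.
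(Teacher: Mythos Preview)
Your proposal is correct and follows essentially the same approach as the paper's proof: both identify $g_i$ and $\hat g_i$ with the sequences $a_i,\hat a_i$ of Fact~\ref{fact:a-minus-a-hat-bound}, bound $\|b\|_\infty,\|\hat b\|_\infty\le B$ via the Lipschitz assumption and $\|c-\hat c\|_\infty,\|b-\hat b\|_\infty\le\beta\|z-\hat z\|_1$ via smoothness, and then chain the resulting $\|g-\hat g\|_1\le 2\beta B^{-1}(1+B)^{2n}\|z-\hat z\|_1$ with Lemma~\ref{lem:bound-z-minus-z-hat} (at $\hat s=s$) to obtain the final constant $2\beta B^{-2}(1+B)^{3n}$. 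The ``main obstacle'' you flag---that $\hat g_i$ indeed satisfies the same backward recursion with $\hat\theta_j$-weights but partial derivatives evaluated at $\hat z$---is exactly what the paper takes for granted via \eqref{eq:grad-psi-expanded-and-simplified:z-i}, so your sanity check there is appropriate.
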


\begin{proof}
Denote $\eta_i(s, z_{1:i-1})$, $\mu_i(s, z_{1:i-1})$,  $\eta_i(s, \hat{z}_{1:i-1})$,  $\mu_i(s, \hat{z}_{1:i-1})$, $\psi_i(s, \hat{z}_{1:i-1}, \hat{\theta}_{i+1:n})$ by $\eta_i, \mu_i, \hat{\eta}_i$, $\hat{\mu}_i$, and $\hat{\psi}_i$ respectively. This proof is based on Fact~\ref{fact:a-minus-a-hat-bound}.
Define,
\begin{flalign*}
a_i &= \frac{\partial \psi_i}{\partial z_i},
\quad  
b_{i,j} = \hat{\theta}_{j} \frac{\partial \eta_{j}}{\partial z_i} + ( 1 - \hat{\theta}_{j}) \frac{\partial \mu_{j}}{\partial z_i},
\quad 
c_{i} = \frac{\partial f}{\partial z_j}
\\
\hat{a}_i &= \frac{\partial \hat{\psi}_i}{\partial \hat{z}_i}, \quad 
\hat{b}_{i,j} = \hat{\theta}_{j} \frac{\partial \hat{\eta}_{j}}{\partial z_i}  + ( 1 - \hat{\theta}_{j}) \frac{\partial \hat{\mu}_{j}}{\partial z_i}, \quad 
\hat{c}_{i} = \frac{\partial \hat{f}}{\partial z_j}.
\end{flalign*}
Moreover, observe that by our assumptions
\begin{subequations}\label{eq-group:bound-b-hat-c-hat}
\begin{flalign}
\abs{ b_{i,j} - \hat{b}_{i,j} } &\le
 \hat{\theta}_{j} \abs{ \frac{\partial \eta_{j}}{\partial z_i} - \frac{\partial \hat{\eta}_{j}}{\partial \hat{z}_i} } + ( 1 - \hat{\theta}_{j}) \abs{\frac{\partial \mu_{j}}{\partial z_i} - \frac{\partial \hat{\mu}_{j}}{\partial \hat{z}_i}}
\le \beta \| z - \hat{z} \|_1 \\
\| c - \hat{c} \|_{\infty} &\le \beta \| z - \hat{z} \|_1 \\
\max\{\| b \|_{\infty}, \| \hat{b} \|_{\infty} \} &\le B.
\end{flalign}
\end{subequations}
Therefore,
\begin{flalign*}
\| a - \hat{a} \|_1 &\le \left( \beta \| z - \hat{z} \|_1 + (1 + B)^{n} \beta \| z - \hat{z} \|_1 \right)
\frac{(1 + B)^{n}}{B} \\
&\le 2 \beta B^{-1} (1 + B)^{2n} \| z - \hat{z} \|_1 \\
&\le 2 \beta B^{-2} (1 + B)^{3n}  \sum_{i=1}^{n} \abs{\theta_i - \hat{\theta}_i} \abs{\eta_i - \mu_i} 
\end{flalign*}
where the first inequality uses \eqref{eq-group:bound-b-hat-c-hat} and Fact~\ref{fact:a-minus-a-hat-bound}, the second inequality uses $(1+B)^{n} \ge 1$, and the third uses Lemma~\ref{lem:bound-z-minus-z-hat}.
\end{proof}

Therefore, if Assumptions~\ref{assume:smooth} and  \ref{assume:bounded-set} hold then, by Fact~\ref{fact:smooth-implies-beta-smooth} and Lemma~\ref{lem:g-minus-psi-hat}, we conclude Assumption~\ref{assume:g-i-predict} holds.



\section{Implementation details}\label{app:implementation-details}

The fastest known way to derive bounds on the values taken by neural network is Interval Bound Propagation (IBP)~\citep{Gowal2019}.
It does not attempt to solve the optimization problem of the type of~\eqref{eq:convex-relaxation} which we define in Appendix~\ref{sec:certification-of-neural-networks} but simply performs interval analysis to derive loose bounds on the values taken by the neural network.
As a result, running IBP is a very fast procedure, whose cost is analogous to performing a forward pass through the network, but will give much looser bounds than solving the optimization problem.
We include it in Table~\ref{tab:benchmark} as a lower bound on the runtime that can be achieved by any of the method. 
We also use it to derive the intermediate bounds $l_i$ and $u_i$ required for the definition of the constraints~\eqref{eq:relu_cvx_hull} when formulating the optimization problems for the methods we are benchmarking.

For DeepVerify~\citep{Dvijotham2018}, we use the code released by the authors. The Lagrangian dual is optimized using a sub-gradient method.
We use the Adam optimizer~\citep{Kingma2015}, starting with an initial step size of $10^{-2}$ and decreasing it by a factor of 10 every 100 steps. We run the algorithm for a maximum of 500 steps, but allow early stopping if the dual objective value reaches the value returned by the dual objective of our NonConvex reformulation. In its original formulation, DeepVerify does not have access to a primal objective and can therefore not be provided with an appropriate stopping criterion. We provide it here with one in order to generate a more fair comparison,

Our NonConvex relaxation is solved following the principle of  \callSimplePsiMin{}. 
To optimize the objective function (line~\ref{algline:optstep} of Algorithm~\ref{alg:PGD}), we use the FISTA with backtracking linesearch~\citep{beck2009fast} algorithm.
We initialize our step size to a high value of 100, and at each optimization step, perform a line-search starting from the previous step-size, progressively shrinking it by a backtracking coefficient of 0.8 until sufficient progress can be guaranteed. 
In order to prevent the use of too small learning rates, we increase the step-size by a factor of 1.5 when no backtracking was necessary, and clip the step size to a minimum of $10^{-5}$.
We run a maximum of 50 steps of this algorithm, unless we first reach a situation where the relative gap between the primal and the dual objective becomes smaller than $10^{-2}$.

For the CVXPY~\citep{cvxpy} based solvers, we used the default settings.

\section{Benchmarking on Robustly trained Networks}\label{app:ibp_benchmarks}
In addition to the results reported in Section~\ref{sec:experiements}, we provide additional results for networks trained with Interval Bound Propagation (IBP)~\citep{Gowal2019}, to achieve robustness against $\mathcal{L}_\infty$-bounded perturbations of size $\epsilon=8.0/255$.
Because of the strong regularization effect of IBP, there is very little difference between the solution to the optimization problem described in Section~\ref{sec:certification-of-neural-networks} and the bounds produced by IBP, so better optimization algorithms do not prove as useful.

For these experiments, we didn't employ the relative duality gap criterion and instead optimized until the duality gap reached a value smaller than 1e-4.

Even on those networks for which the solution should be easier to obtain, we observe similar results.
Off-the-shelf solvers based on CVXPY do not scale beyond the Tiny network, and the runtime for DeepVerify is significantly larger than for our nonconvex solver, as shown in Table~\ref{tab:ibp_benchmark}.
The discrepancy in runtime comes from the fact that DeepVerify does not manage to converge to an accurate enough solution, as shown by the low percentage of early stopping in Table~\ref{tab:ibp_convergence_prop} and the observable plateaus in Figure~\ref{fig:ibp_opt_trace}.

We note that as opposed to networks trained with adversarial training, Figure~\ref{fig:ibp_distance_to_corner} reveals that when computing bounds on networks trained to achieve certifiable robustness, activations do get close to potentially degenerate points, which might be problematic for the optimization procedure and require us to employ Algorithm~\ref{alg:fix-deg} to find optimal solutions.


\begin{table}[h]
\floatbox[{\capbeside\thisfloatsetup{floatwidth=sidefil,capbesideposition={right,top},capbesidewidth=.32\linewidth}}]{table}
{\caption{\small Runtime of bound computation, in milliseconds. Results correspond to the computation of a lower bound on the gap between ground truth and all other classes for a network subject to an $L_\infty$ adversarial attack with budget $\frac{8.0}{255}$, averaged over the CIFAR-10 test set, on network trained with IBP.}\label{tab:ibp_benchmark}}%
{\small\addtolength{\tabcolsep}{-5pt}
\begin{tabular}{lcccccc}
        \toprule
        Methods & \multicolumn{3}{c}{ReLU activation} & \multicolumn{3}{c}{SoftPlus activation}\\
        & Tiny & Small & Medium & Tiny & Small & Medium\\
        \midrule
        IBP~\citep{Gowal2019} & 3.4 & 2.8 & 3.2 
                              & 3.9 & 2.3 & 2.8\\
        \midrule
        DeepVerify~\citep{Dvijotham2018} & 338 & 658 & 1171  
                   & 403 & 842 & 1620\\
        NonConvex (ours) & 93 & 176 & 297 
                         & 6.1 & 61 & 210\\
        \midrule
        Solver (SCS)~\citep{Odonoghue2016} & $78.4 \cdot 10^3$  & - & - 
                     & $32.1 \cdot 10^4$  & - & - \\
        Solver (ECOS)~\citep{domahidi2013ecos} & $41.2 \cdot 10^3$  & - & - 
                     & - & - & - \\
        \bottomrule
    \end{tabular}}
\end{table}

\begin{table}[h]
\floatbox[{\capbeside\thisfloatsetup{floatwidth=sidefil,capbesideposition={right,top},capbesidewidth=.32\linewidth}}]{table}
{\caption{\small Proportion of bound computations on CIFAR-10 where the algorithm converges within the iteration budget, and average number of iterations for each first order algorithm for an IBP trained network.}\label{tab:ibp_convergence_prop}}%
{\small\addtolength{\tabcolsep}{-5pt}
\begin{tabular}{l@{\hskip 0.05\linewidth}cc@{\hskip 0.05\linewidth}cc}
            \toprule
            & \multicolumn{2}{p{.25\linewidth}}{\centering Proportion of early stopping} & \multicolumn{2}{p{.25\linewidth}}{\centering Average number of iterations}\\
            & DeepVerify & NonConvex & DeepVerify & NonConvex \\ 
            \midrule
            Tiny ReLU & 92\% & 99\% & 242 & 6.5 \\
            Small ReLU & 25\% & 99\% & 479 & 5.5 \\
            Medium ReLU & 3\% & 100\%& 491 &  4.92 \\
            \midrule
            Tiny SoftPlus & 91 \% & 100\%& 238.33 & 7.4\\
            Small SoftPlus & 0\% & 99\%& 500 & 8.5\\
            Medium SoftPlus & 0\% & 98\%& 500 & 9.8\\
            \bottomrule
        \end{tabular}}
\end{table}

\begin{figure}[h]
    \begin{subfigure}{.62\textwidth}
        \includegraphics[width=\linewidth]{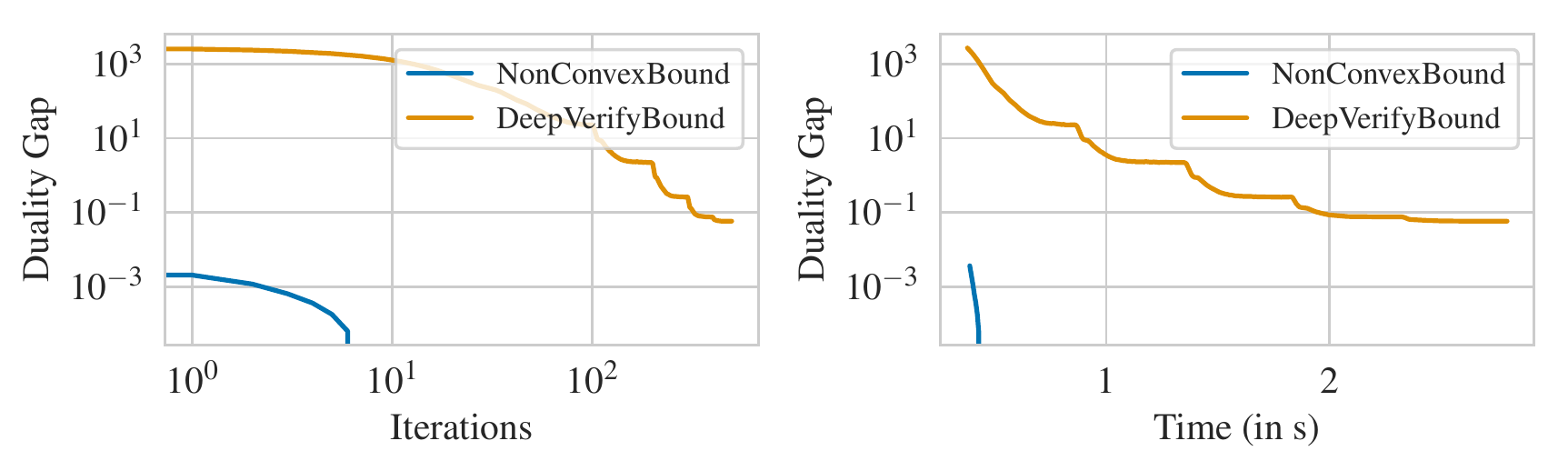}%
        \caption{Evolution of the duality gap as a function of time or number of iteration, for the NonConvex and DeepVerify Solver.}
        \label{fig:ibp_opt_trace}
    \end{subfigure}%
    \hfill%
    \begin{subfigure}{.31\textwidth}
        \centering
        \includegraphics[width=\linewidth]{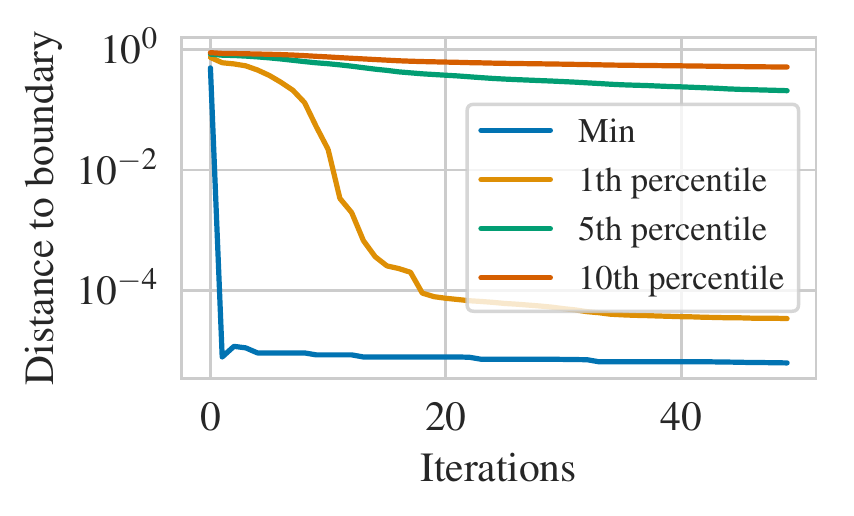}
        \caption{Distribution of distance to potentially degenerate point.}
        \label{fig:ibp_distance_to_corner}      
    \end{subfigure}%
    \hfill %
    \caption{Evaluation on the Medium-sized network with SoftPlus activation function trained with IBP.}
\end{figure}

\end{document}